\theoremstyle{plain} 
\newtheorem{defn}{Definition}[section]
\newtheorem*{defn*}{Definition}
\newtheorem*{lem*}{Lemma}
\newtheorem{lem}[defn]{Lemma}
\newtheorem{thm}{Theorem}[section]
\newtheorem*{thm*}{Theorem}
\def\thmhead@plain#1#2#3{
  \thmname{#1}\thmnumber{\@ifnotempty{#1}{ }\@upn{#2}}
  \thmnote{ {\the\thm@notefont#3}}}
\let\thmhead\thmhead@plain
\newtheorem*{cor*}{Corollary}
\newtheorem*{pro*}{Property}
\newtheorem{prop}[defn]{Proposition}
\newtheorem*{prop*}{Proposition}
\newtheorem*{claim*}{Claim}
\newtheorem{con}[defn]{Conjecture}
\theoremstyle{definition} 
\newtheorem*{rem*}{Remark}
\newtheorem*{rems*}{Remarks}
\newtheorem*{prob*}{Problem}
\newtheorem*{nota*}{Notation}
\newtheorem*{ques*}{Questions}
\title{Gordian Adjacency for Positive Braid Knots}
\author[1]{Tolson H. Bell \thanks{
                    tbell37@gatech.edu}}
\author[2]{David C. Luo \thanks{
                    david.luo@emory.edu}}
\author[3]{Luke Seaton \thanks{
                    lukeseaton98@gmail.com}}
\author[4]{Samuel P. Serra \thanks{
                    samuel.serra@colorado.edu}}
\affil[1]{School of Mathematics, Georgia Institute of Technology}
\affil[2]{Department of Mathematics, Emory University}
\affil[3]{Department of Mathematics and Statistics, Louisiana Tech University}
\affil[4]{Department of Mathematics, University of Colorado Boulder}
\begin{document}
\date{}
\maketitle

\begin{abstract}

A knot $K_1$ is said to be \textit{Gordian adjacent} to a knot $K_2$ if $K_1$ is an intermediate knot on an unknotting sequence of $K_2$. We extend previous results on Gordian adjacency by showing sufficient conditions for Gordian adjacency between classes of positive braid knots through manipulations of braid words. In addition, we explore unknotting sequences of positive braid knots and give a proof that there are only finitely many positive braid knots for a given unknotting number.

\end{abstract}

\section{Introduction}

A \textit{mathematical knot} is a non-intersecting embedding of a closed curve in three-dimensional space that is considered unique only up to \textit{isotopy}, a continuous deformation of this embedding without passing the curve through itself. We can visualize a knot by projecting the embedding onto a two-dimensional plane to form a \textit{knot diagram}. A knot diagram contains \textit{crossings}, where parts of the curve intersect in the projection. When we do pass one part of the curve through another, we have made a \textit{crossing change}.
\vspace{-0.2in}
\begin{figure}[H]
\begin{center}
	\begin{tikzpicture}
		\node at (0,1.2){};
		\node at (0,0) {\includegraphics[width=.75in]{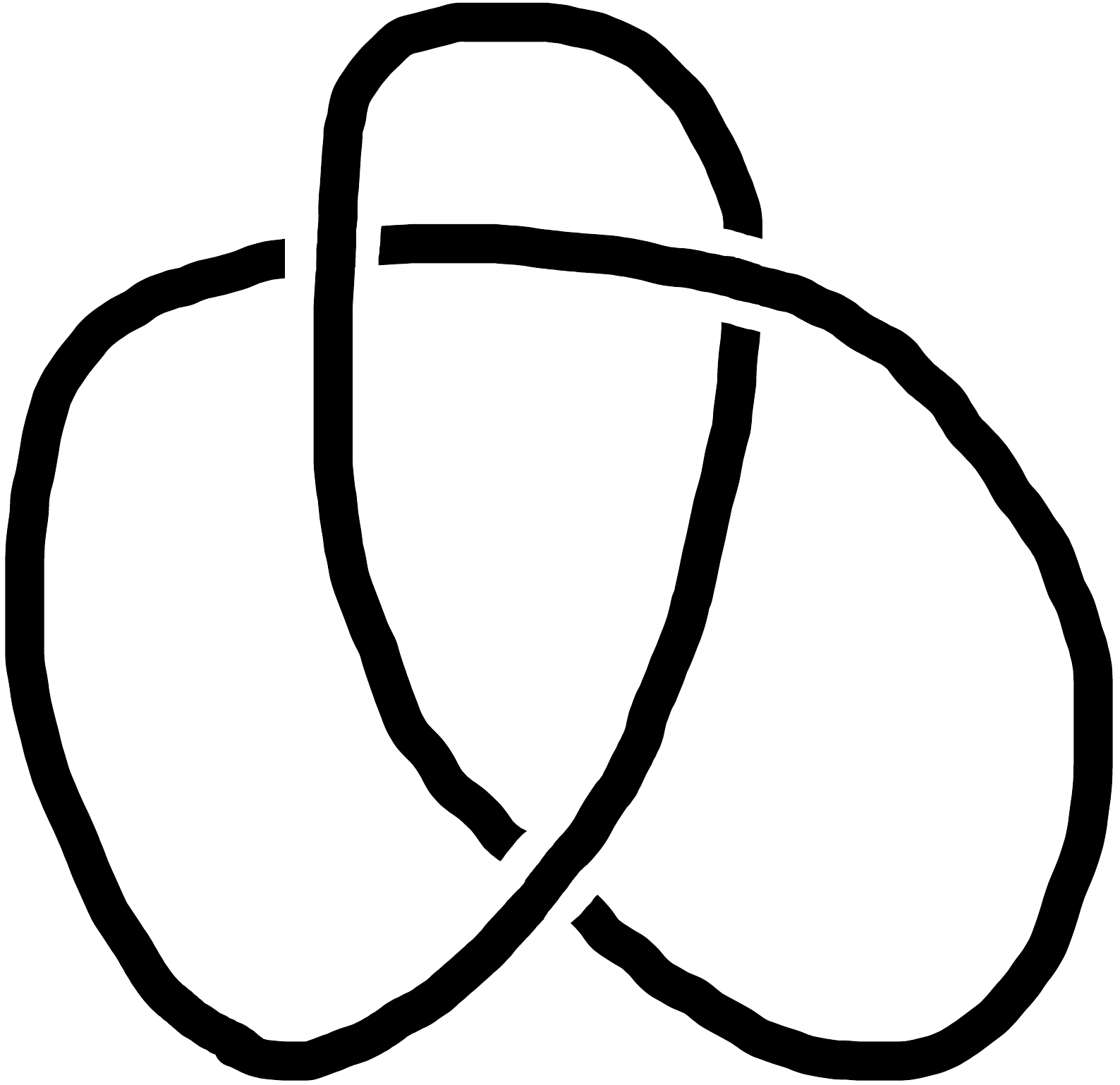}};

		\draw[very thick, red, dashed] (-.03,-.6) circle (0.2cm);
		\node at (1.6,0) {$\overset{\text{crossing}}{\underset{\text{change}}{\xrightarrow{\hspace{2em}}}}$};
  \end{tikzpicture}
  \begin{tikzpicture}
    \useasboundingbox (0,0) rectangle (2,2);
		\node at (.8,1.2) {\includegraphics[width=.75in]{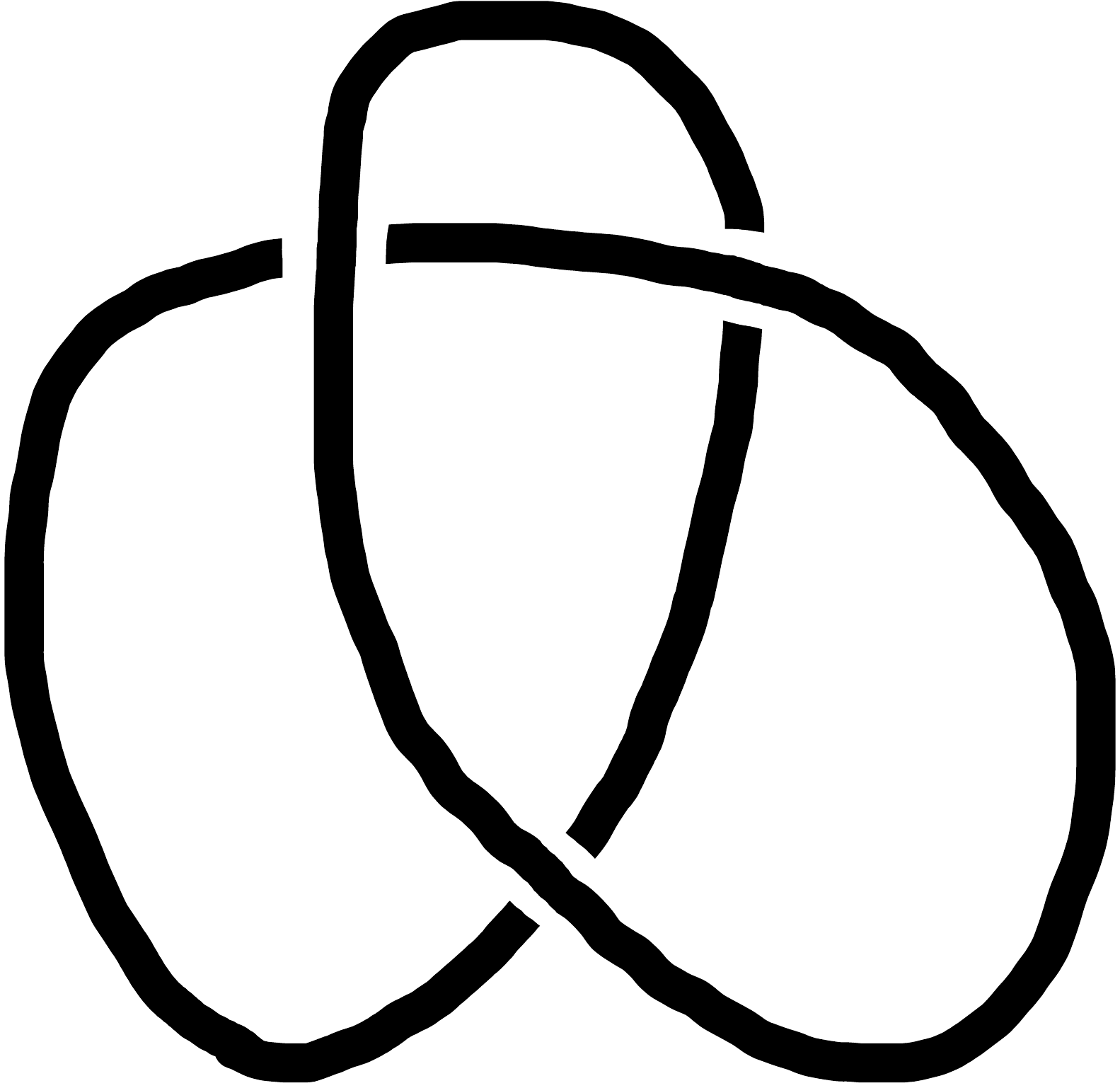}};
		\draw[very thick, red, dashed] (0.8,.6) circle (0.2cm);

  \end{tikzpicture}
  \begin{tikzpicture}
    \useasboundingbox (0,0) rectangle (3,3);
		\node at (.3,.9) {$\underset{\text{ isotopy}}{\approx}$};
		\node at (1.9,1.2) {\includegraphics[width=.75in]{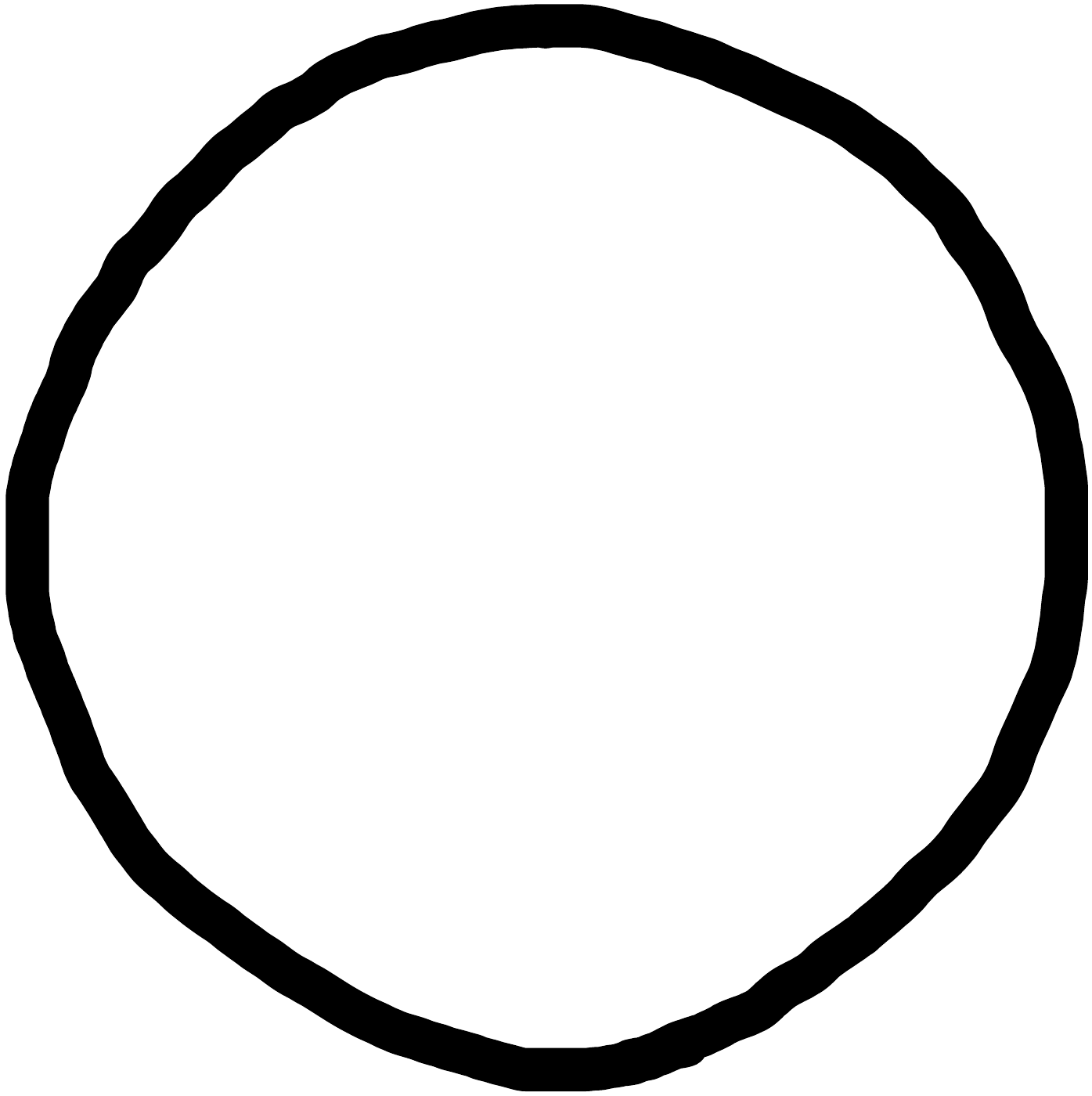}};
		\node at (1.9,1.2){Unknot};
	\end{tikzpicture}
\end{center}
	\caption{Crossing Change and Isotopy of the Left-handed Trefoil Knot}
	\label{tref}
\end{figure}

From these definitions, we obtain the concepts of \textit{unknotting numbers} and \textit{Gordian distance} between knots \cite{KnotBook}.
\begin{defn}
The Gordian distance $d_g(K_1,K_2)$ between two knots $K_1$ and $K_2$ is the minimal number of crossing changes needed to change $K_2$ into $K_1$.
\end{defn}
\begin{defn}\label{unknottingno}
The unknotting number $u(K)$ of a knot $K$ is given by $d_g(O,K)$, where $O$ is the unknot.
\end{defn}
Figure \ref{tref} shows that the unknotting number of the trefoil knot is no more than one, as it demonstrates how the trefoil can be made into the unknot with only one crossing change.

An important class of knots we study in great detail is torus knots. A \textit{torus knot} is a closed curve on the surface of an unknotted torus that does not intersect itself anywhere \cite{KnotBook}. We denote a torus knot by $T(p,q)$, where $p$ and $q$ denote the number of longitudinal and meridional twists around the torus respectively such that $p$ is coprime to $q$. Note that $T(p,q)$ is isotopic to $T(q,p)$, that is, $T(p,q)=T(q,p)$ \cite{KnotBook}.

\begin{figure}[h]
\begin{center}
    \centering
    \includegraphics[scale=.41]{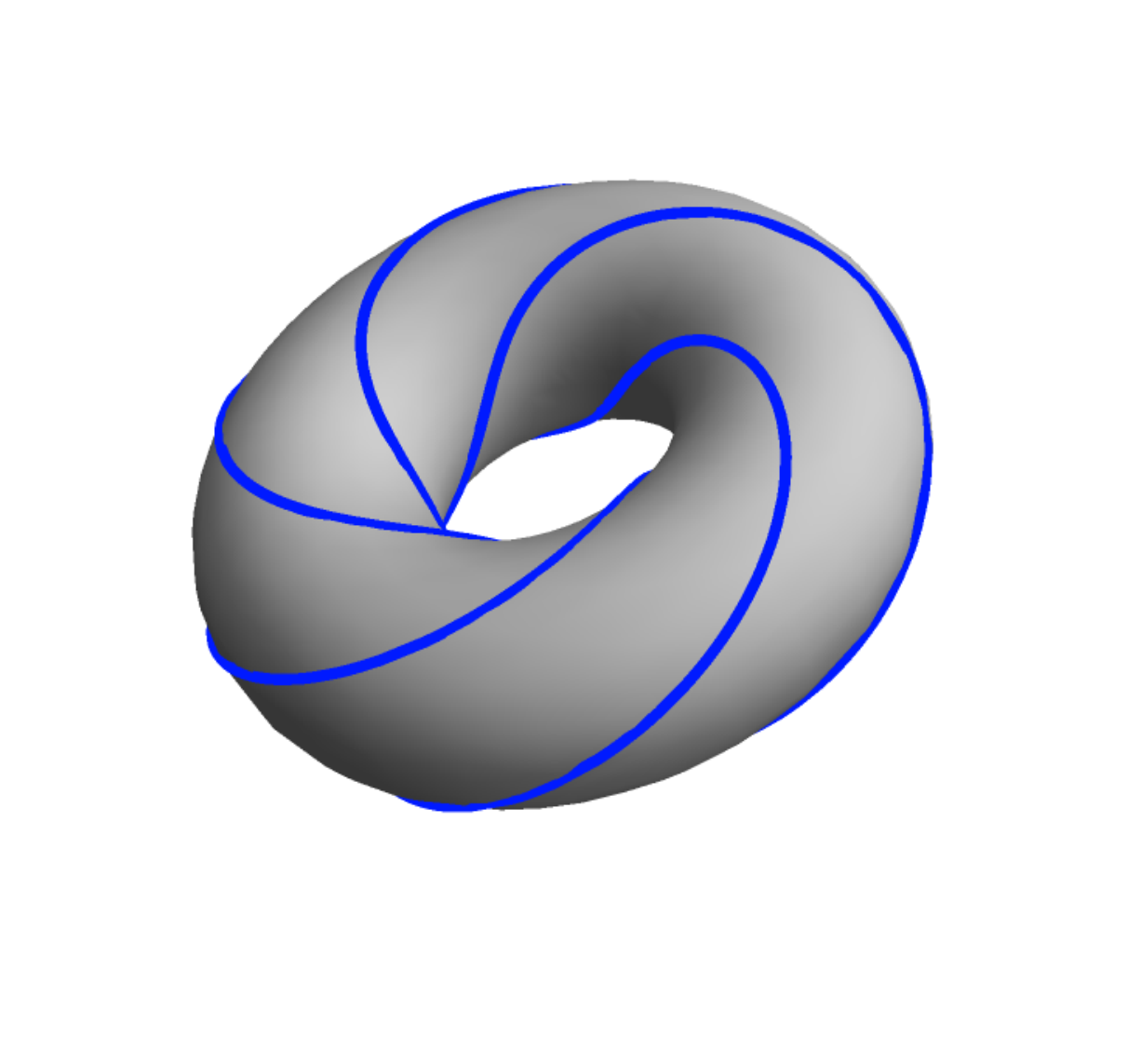}
\label{torus:}
    \centering
    \includegraphics[scale=.25]{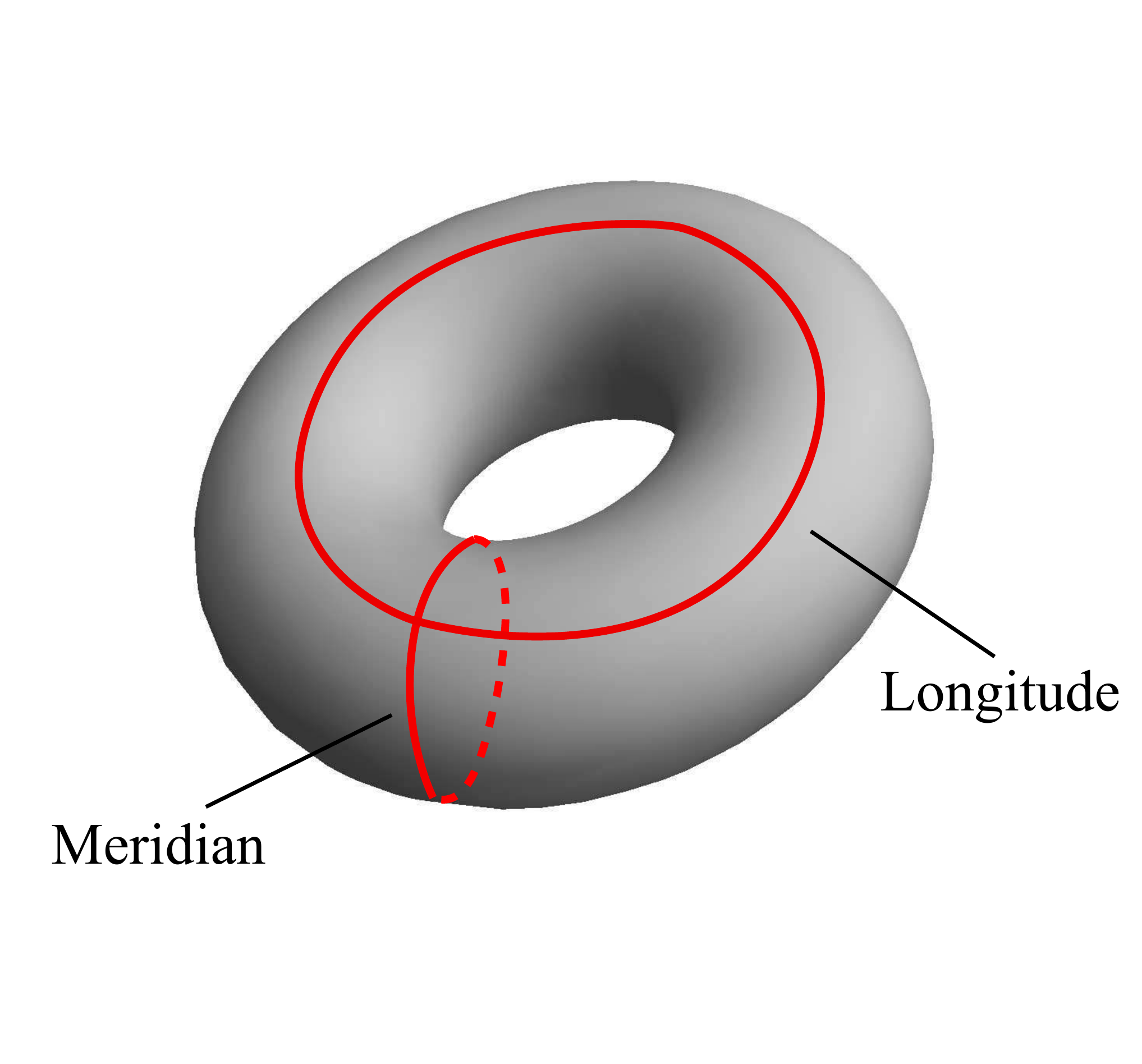}
\end{center}
\caption{T(2,5) and the Axes of the Torus}
\end{figure}

In 2014, Peter Feller introduced the concept of \textit{Gordian adjacency} between knots \cite{Feller}, an idea closely related to the description of \textit{unknotting sequences}.

\begin{defn}[(Feller Definition 1)]
A knot $K_1$ is said to be Gordian adjacent to a knot $K_2$, $K_1 \leq_g K_2$, if $d_g(K_1,K_2)=u(K_2)-u(K_1)$.
\end{defn}

\begin{defn}
An unknotting sequence of a knot $K$ is a series of knots beginning with $K$ and ending with the unknot $O$ such that for any two consecutive knots $K_2$ followed by $K_1$ in the series, $u(K_2)-u(K_1)=1$ and $d_g(K_1,K_2)=1$. 
\end{defn}
Note that in our definition of an unknotting sequence, it is optimal in the sense that it is the fewest number of knots needed to get from a particular knot to the unknot.

In general, the Gordian distance is difficult to compute. However, if $K_1$ and $K_2$ are Gordian adjacent, then their Gordian distance is the difference of their unknotting numbers. An equivalent definition for Gordian adjacency between knots $K_1$ and $K_2$ is that $K_1 \leq_g K_2$ if $K_1$ appears in an unknotting sequence of $K_2$ \cite{Feller}.  

Feller was able to show that under certain circumstances, Gordian adjacencies are guaranteed between certain classes of torus knots \cite{Feller}.

\begin{thm}[(Feller Theorem 2)]\label{fel2}
If $(n, m)$ and $(a, b)$ are pairs of coprime positive integers with $n\leq a$ and $m \leq b$, then  $T(n,m) \leq_g T(a,b)$.
\end{thm}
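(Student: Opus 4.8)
The plan is to prove the two inequalities whose conjunction is the definition of Gordian adjacency. Writing $N = u(T(a,b)) - u(T(n,m))$, the bound $d_g(T(n,m),T(a,b)) \ge N$ is automatic: since $u(K)=d_g(O,K)$, the triangle inequality for Gordian distance gives $u(T(a,b)) \le u(T(n,m)) + d_g(T(n,m),T(a,b))$, which rearranges to the claim. The whole content is therefore the reverse inequality $d_g(T(n,m),T(a,b)) \le N$, i.e.\ exhibiting a sequence of exactly $N$ crossing changes carrying a diagram of $T(a,b)$ to a diagram of $T(n,m)$. To evaluate $N$ I would invoke the Milnor-conjecture formula $u(T(p,q)) = \tfrac{(p-1)(q-1)}{2}$, so that $N = \tfrac{(a-1)(b-1)-(n-1)(m-1)}{2}$.

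For the upper bound I would first record that $\leq_g$ is transitive: if $K_1 \leq_g K_2 \leq_g K_3$ then $d_g(K_1,K_3) \le d_g(K_1,K_2)+d_g(K_2,K_3) = u(K_3)-u(K_1)$, and the reverse inequality is the automatic one above, so $K_1 \leq_g K_3$. Using $T(p,q)=T(q,p)$ together with transitivity, it then suffices to prove the single-coordinate statement: if $\gcd(a,b)=\gcd(a,m)=1$ and $m \le b$, then $T(a,m) \leq_g T(a,b)$. Indeed, applying this once gives $T(a,m)\leq_g T(a,b)$, and applying it after the symmetries $T(n,m)=T(m,n)$ and $T(a,m)=T(m,a)$ gives $T(n,m)=T(m,n)\leq_g T(m,a)=T(a,m)$; transitivity then yields $T(n,m)\leq_g T(a,b)$. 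A short computation confirms the counts add correctly: $\tfrac{(a-1)(b-m)}{2}+\tfrac{(m-1)(a-n)}{2} = N$.

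For the single-coordinate lemma I would work directly with the positive braid word. The knot $T(a,b)$ is the closure of $(\sigma_1\sigma_2\cdots\sigma_{a-1})^b \in B_a$, and reducing the exponent from $b$ to $m$ deletes $b-m$ blocks $\sigma_1\cdots\sigma_{a-1}$, that is, $(a-1)(b-m)$ crossings. The mechanism is block removal by crossing change and cancellation: after reorganizing two adjacent blocks with the braid relations $\sigma_i\sigma_{i+1}\sigma_i=\sigma_{i+1}\sigma_i\sigma_{i+1}$, a single crossing change $\sigma_i\mapsto\sigma_i^{-1}$ creates a cancelling pair $\sigma_i\sigma_i^{-1}$ and removes two crossings (this is exactly the move sending the trefoil $\widehat{(\sigma_1\sigma_2)^2}$ to the unknot). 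Thus a block, carrying $a-1$ crossings, is deleted using $\tfrac{a-1}{2}$ crossing changes. When $a$ is odd this is an integer and blocks are removed one at a time; when $a$ is even (so $b,m$ are both odd and $b-m$ is even) blocks are removed two at a time, $a-1$ changes per pair. Either way the exponent drops to $m$ after at most $\tfrac{(a-1)(b-m)}{2}$ crossing changes, giving $d_g(T(a,m),T(a,b)) \le \tfrac{(a-1)(b-m)}{2} = u(T(a,b))-u(T(a,m))$; combined with the automatic lower bound this is equality, proving $T(a,m)\leq_g T(a,b)$.

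The main obstacle is making the block-removal step rigorous for arbitrary $a$: one must give an explicit, systematic sequence of braid relations that funnels the generators of a block into adjacent cancelling pairs, verify that $\tfrac{a-1}{2}$ crossing changes suffice to delete it, and check that the word returned after each deletion is again conjugate to a standard torus braid so that the procedure can be iterated. Note also that the braid manipulation alone yields only the inequality $d_g \le \tfrac{(a-1)(b-m)}{2}$; turning this into the equality required for Gordian adjacency is precisely what forces reliance on the Milnor-conjecture unknotting formula through the lower bound.
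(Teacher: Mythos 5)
The first thing to say is that the paper does not prove this statement: Theorem \ref{fel2} is quoted from Feller \cite{Feller} as background, and the paper's own machinery recovers only a weakened version of it (the remark after Theorem \ref{w}: $T(a,c) \leq_g T(a,d)$ when $c=d-ka$). Your proposal must therefore stand on its own, and it has a genuine structural gap: the transitivity reduction factors the adjacency through $T(m,a)$, but nothing in the hypotheses makes $m$ coprime to $a$. Take $(n,m)=(2,3)$ and $(a,b)=(3,4)$: all hypotheses hold, yet your intermediate $T(3,3)$ is a $3$-component link, for which $u$ and $\leq_g$ are not defined. Nor can this be repaired by inserting a buffer exponent $c$ with $m\le c\le b$ and $\gcd(c,na)=1$, since here $[m,b]=\{3,4\}$ contains no integer coprime to $6$. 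Worse, the missing object is not merely undefined but unreachable: a crossing change performed on a knot diagram always yields a knot (it preserves the number of components), so no sequence of crossing changes carries the knot $T(3,4)$ to the link $T(3,3)$ at all. The factorization collapses precisely in the tight cases, which are the hard ones.

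The same coprimality problem also sinks the block-removal mechanism inside your single-coordinate lemma, beyond the incompleteness you already flag. Removing one block $\sigma_1\cdots\sigma_{a-1}$ from $(\sigma_1\cdots\sigma_{a-1})^b$ lands on the closed braid $(\sigma_1\cdots\sigma_{a-1})^{b-1}$, which is a link whenever $\gcd(a,b-1)>1$ (already for $a=3$ and $b\equiv 1 \pmod 3$); by the observation above it is therefore not reachable by crossing changes, so even for odd $a$ blocks cannot in general be removed one at a time, and the grouping of removed blocks must be chosen so that every intermediate stage remains a knot. In other words, the step you call the ``main obstacle'' is not a technicality to be checked but the entire content of the theorem, and it interacts with coprimality bookkeeping your outline does not do. This is consistent with the paper's own experience: its five-rules calculus deletes a subword cleanly only when the closure of that subword is an $a$-component link (Theorem \ref{w}), i.e.\ when exponents change by multiples of $a$ --- exactly the moves for which knot-ness of all intermediates is automatic --- and anything finer (Theorems \ref{ci}, \ref{cin}, \ref{thrfou}) requires substantially more intricate braid manipulation. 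Your lower bound via the triangle inequality, the transitivity of $\leq_g$, and the crossing-change count $\frac{(a-1)(b-m)}{2}$ are all fine; the proof fails at the reduction step and at the unproven removal mechanism.
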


\begin{thm}[(Feller Theorem 3)]\label{fel3}
If $n$ and $m$ are positive integers with $n$ odd and $m$ not a multiple
of 3, then  $T(2,n) \leq_g T(3,m)$ if and only if $n \leq \frac{4}{3}m + \frac{1}{3}$.
\end{thm}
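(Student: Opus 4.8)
The plan is to record the unknotting numbers, reduce the statement to a construction together with an obstruction, and treat each separately. Because $T(2,n)$ and $T(3,m)$ are torus (hence positive braid) knots, the resolution of Milnor's conjecture gives $u(T(2,n))=\tfrac{n-1}{2}$ and $u(T(3,m))=m-1$, so the desired identity $d_g(T(2,n),T(3,m))=u(T(3,m))-u(T(2,n))$ reads $d_g=(m-1)-\tfrac{n-1}{2}$. Since $d_g(T(2,n),T(3,m))\ge u(T(3,m))-u(T(2,n))$ always holds, the adjacency $T(2,n)\leq_g T(3,m)$ is \emph{equivalent} to exhibiting at most $(m-1)-\tfrac{n-1}{2}$ crossing changes carrying $T(3,m)$ to $T(2,n)$. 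Moreover $\leq_g$ is transitive, and $T(2,n)\leq_g T(2,n')$ whenever $n\le n'$ are odd (switch $\tfrac{n'-n}{2}$ crossings in the $2$-braid $\sigma_1^{\,n'}$), so for the ``if'' direction it suffices to realize the largest odd $n$ with $n\le\tfrac43 m+\tfrac13$; every smaller $n$ then follows.

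For the ``if'' direction I would construct the crossing changes directly on braid words, in the spirit of the rest of the paper. Writing $T(3,m)$ as the closure of $(\sigma_1\sigma_2)^m\in B_3$, I would process the word in blocks of $(\sigma_1\sigma_2)^3$ and, within each block, switch a single $\sigma_2$ to $\sigma_2^{-1}$ so that the ensuing cancellation permits a Markov destabilization of the third strand; iterating converts the $3$-braid into the $2$-braid $\sigma_1^{\,n}$, whose closure is $T(2,n)$. The bookkeeping to verify is that the number of switches is at most $(m-1)-\tfrac{n-1}{2}$ --- roughly one per three copies of $\sigma_1\sigma_2$, hence about $m/3$, which matches --- and that the closure is \emph{exactly} $T(2,n)$. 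The delicate point is reaching the maximal admissible $n$, since Feller Theorem \ref{fel2} alone only yields $n\le m$; the block pattern must destabilize while deleting as few crossings as possible.

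For the ``only if'' direction I would obstruct adjacency using the Tristram--Levine signature function $\omega\mapsto\sigma_\omega(K)$, $\omega\in S^1$. The mechanism is that changing a positive crossing to a negative one alters $\sigma_\omega$ by $0$ or $+2$ for every $\omega$; since the relevant unknotting sequences of these positive braid knots are realized by positive-to-negative switches, $\sigma_\omega$ is monotone along them, which forces the necessary condition $|\sigma_\omega(T(2,n))|\le|\sigma_\omega(T(3,m))|$ for all $\omega$. The signature of $T(2,n)$ is the monotone step function on $(0,\tfrac12]$ with jumps of $-2$ at the Alexander roots $\omega=e^{\pi i(2j-1)/n}$, attaining $-(n-1)$ at $\omega=-1$, whereas that of $T(3,m)$ is a \emph{non}-monotone step function whose jumps lie among the $3m$-th roots of unity. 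Comparing these two profiles --- a rising staircase against an intermittently dipping one --- should pin the threshold to exactly $n\le\tfrac43 m+\tfrac13$.

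I expect this obstruction to be the main obstacle, because the classical signature $\sigma_{-1}$ by itself is not sharp. For many $m$ the inequality $n-1\le|\sigma_{-1}(T(3,m))|$ already reproduces the stated bound, but not universally: for instance $|\sigma_{-1}(T(3,10))|=14$ would permit $n=15$, which the theorem forbids. Such cases require evaluating $\sigma_\omega$ at a carefully chosen $\omega\ne-1$, precisely where a dip of the $T(3,m)$ staircase slips just beneath the rising $T(2,n)$ staircase. The essential technical work is thus an exact description of the jump pattern of $\sigma_\omega(T(3,m))$ --- most cleanly via the Brieskorn--Hirzebruch lattice-point count for torus-knot signatures --- together with identifying, uniformly in $m$, the binding root of unity.
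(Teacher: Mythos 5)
This statement is not proved in the paper at all --- it is Feller's Theorem 3, quoted with a citation --- so the only in-paper content to compare against is the remark following Theorem \ref{cin}: the $n=2$ cases of Theorems \ref{ci} and \ref{cin} give an alternate proof of the ``if'' direction, namely $T(2,a)\leq_g T(3,b)$ whenever $a\le\frac{4b+1}{3}$. Your ``if'' construction is essentially that argument: writing $T(3,m)$ as a power of the full twist $\Delta_3^2=\sigma_2\sigma_1\sigma_1\sigma_2\sigma_1\sigma_1$ (Lemma \ref{iso2}) times a remainder, and spending one crossing change per full twist to cancel a pair of $\sigma_2$'s while retaining four $\sigma_1$'s, which reproduces your count of roughly $m/3$ changes and hits the extremal odd $n$; transitivity plus Theorem \ref{fel2} then handles smaller $n$, as you say. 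One correction to your description: you cannot ``destabilize the third strand'' within each block. Each crossing change only deletes an adjacent pair $\sigma_2\sigma_2$ (after the Lemma \ref{iso2}-type rearrangement puts them side by side, as in Proposition \ref{sign}); a single Markov destabilization is performed once, at the very end, when exactly one $\sigma_2$ remains in the whole word.

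The genuine gap is in your ``only if'' direction. Gordian adjacency only asserts the \emph{existence} of $u(K_2)-u(K_1)$ crossing changes in some sequence of diagrams; nothing forces these to be positive-to-negative switches, and the intermediate knots need not be positive braid knots, so your monotonicity mechanism --- and with it the asserted necessary condition $\left|\sigma_\omega(T(2,n))\right|\le\left|\sigma_\omega(T(3,m))\right|$ --- is unjustified. The correct obstruction, and the one Feller actually uses, is the Tristram--Levine bound on Gordian distance, $\left|\sigma_\omega(K_1)-\sigma_\omega(K_2)\right|\le 2\,d_g(K_1,K_2)$ for $\omega$ avoiding the Alexander roots, which under adjacency becomes $\left|\sigma_\omega(T(2,n))-\sigma_\omega(T(3,m))\right|\le 2(m-1)-(n-1)$: it compares the signature \emph{difference} to the gap in unknotting numbers, not the absolute signatures to each other. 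Your own test case illustrates the distinction: at $\omega=-1$ with $m=10$, $n=15$, the difference is $|{-14}-({-14})|=0\le 4$, so the corrected bound at $\omega=-1$ does not obstruct either, and one genuinely needs other values of $\omega$. Finally, identifying the binding root of unity uniformly in $m$ and verifying sharpness against Litherland's formula is not a deferrable technicality --- it is the entire content of Feller's proof of this direction --- so as written your proposal is a plan whose decisive step is absent, on top of resting on an incorrect form of the signature obstruction.
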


Our main goal in this paper is to build off Feller's results by finding Gordian adjacencies among other classes of positive braid knots. In Section 3, we describe new techniques which help prove our results by representing positive braid knots by their braid words. In Section 4, we present our results on Gordian adjacency for certain classes of positive braid knots. Listed below are the major theorems we obtain.

\begin{thm*}[\ref{ci}]\label{Theorem1}
If $n$ and $k$ are positive integers, then  $T(n,n^2k+1) \leq_g T(n+1,(n^2-1)k+1)$.
\end{thm*}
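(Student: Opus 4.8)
The plan is to work entirely with the positive braid words of the two knots, using the standard fact that for a positive braid knot presented on $s$ strands with $c$ crossings the unknotting number equals the Seifert genus, $u = g = \tfrac{c-s+1}{2}$, a consequence of the resolution of the Milnor conjecture. Under this formula two moves suffice. Deleting a subword $\sigma_i\sigma_i$, realized by the single crossing change $\sigma_i\sigma_i \to \sigma_i\sigma_i\inv = 1$, keeps $s$ fixed and lowers $c$ by $2$, hence lowers $u$ by exactly $1$; a Markov destabilization removing a lone top generator $\sigma_{s-1}$ lowers both $c$ and $s$ by $1$, hence leaves $u$ unchanged. Writing $T(n+1,(n^2-1)k+1)$ as the closure of $\beta=(\sigma_1\cdots\sigma_n)^{(n^2-1)k+1}$ on $n+1$ strands and $T(n,n^2k+1)$ as the closure of $\gamma=(\sigma_1\cdots\sigma_{n-1})^{n^2k+1}$ on $n$ strands, the genus formula gives $u\big(T(n+1,(n^2-1)k+1)\big)-u\big(T(n,n^2k+1)\big)=\tfrac{n(n-1)k}{2}$. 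It therefore suffices to transform $\beta$ into $\gamma$ using braid relations and Markov moves (which cost nothing) together with exactly $\tfrac{n(n-1)k}{2}$ deletions of the above type: such a transformation places $T(n,n^2k+1)$ on an optimal unknotting sequence of $T(n+1,(n^2-1)k+1)$, which is the definition of $\gord$.

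To organize the transformation I would factor out full twists. Using $\Delta_p^2=(\sigma_1\cdots\sigma_{p-1})^p$ and the factorizations $(n^2-1)k+1=(n+1)(n-1)k+1$ and $n^2k+1=n\cdot nk+1$, write $\beta=(\Delta_{n+1}^2)^{(n-1)k}(\sigma_1\cdots\sigma_n)$ and $\gamma=(\Delta_n^2)^{nk}(\sigma_1\cdots\sigma_{n-1})$. The target is then to convert, by crossing changes alone, $\beta$ into the $(n+1)$-strand word $(\Delta_n^2)^{nk}(\sigma_1\cdots\sigma_{n-1})\sigma_n$, in which $\sigma_n$ occurs exactly once, and then to apply a single destabilization to delete that $\sigma_n$ and land on $\gamma$. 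I would prove the conversion by induction on $k$, each inductive step handling one block of $(n-1)$ full twists $\Delta_{n+1}^2$. Using the decomposition $\Delta_{n+1}^2=\Delta_n^2\,(\sigma_n\cdots\sigma_1)(\sigma_1\cdots\sigma_n)$, which isolates the crossings of the top strand, one unwraps strand $n+1$: each wrap $(\sigma_n\cdots\sigma_1)(\sigma_1\cdots\sigma_n)$ is simplified using $\sigma_n\sigma_{n-1}\sigma_n=\sigma_{n-1}\sigma_n\sigma_{n-1}$ and far commutativity until two $\sigma_n$'s become adjacent and a $\sigma_n\sigma_n$ is deleted, while the same relations feed the freed crossings into the lower $n$ strands, promoting $(\Delta_{n+1}^2)^{n-1}$ to $(\Delta_n^2)^{n}$ there. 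A crossing count shows each block removes $(n-1)n(n+1)-n^2(n-1)=n(n-1)$ crossings, i.e. $\binom{n}{2}$ deletions, so the $k$ blocks cost $\tfrac{n(n-1)k}{2}$ deletions in total, exactly the required number.

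The main obstacle is this inductive step: producing the explicit, order-checked sequence of braid relations and $\sigma_i\sigma_i$-deletions that unwraps the top strand while correctly building the extra full twist on the remaining strands, and verifying both that no deletion is wasted (each must drop $u$ by $1$) and that no intermediate word acquires an unintended number of components. For $n=2$ every deletion falls on $\sigma_2$ and the statement collapses to Feller Theorem \ref{fel3}, a convenient base sanity check; the genuinely new content is $n\geq 3$, where the braid relations must redistribute crossings among several generators and keeping the bookkeeping of generator multiplicities consistent throughout the reduction is the delicate part.
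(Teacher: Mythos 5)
Your skeleton coincides with the paper's proof of Theorem \ref{ci} in every structural respect: write the larger knot as $(\Delta_{n+1}^2)^{(n-1)k}\sigma_n\cdots\sigma_1$, factor each full twist as $\Delta_{n+1}^2=\sigma_n\cdots\sigma_1\sigma_1\cdots\sigma_n\,\Delta_n^2$ (Lemma \ref{iso2}) and commute the wrap words past the lower-strand braid (Lemma \ref{iso1}), delete paired $\sigma_n$'s by crossing changes, remove the lone surviving $\sigma_n$ by one Markov destabilization, and certify adjacency via the formula $u=\frac{\ell-n+1}{2}$ with total cost $\frac{n(n-1)}{2}k$. But you have explicitly declined to carry out the decisive step --- ``producing the explicit, order-checked sequence'' --- and that step is where all the content of this theorem lives. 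The missing idea is the reassembly identity (the paper's Lemma \ref{iso3}): after cascading the deletions down through the generator indices, one is left with $(\Delta_n^2)^{(n-1)k}$ times the telescoped word $(\sigma_{n-1}\cdots\sigma_1\sigma_1\cdots\sigma_{n-1})^k\sigma_{n-1}\cdots(\sigma_2\sigma_1\sigma_1\sigma_2)^k\sigma_2(\sigma_1\sigma_1)^k\sigma_1$, in which the interleaved single generators $\sigma_{n-1},\dots,\sigma_1$ are the survivors of each level's deletions; Lemma \ref{iso3} --- proved by induction on the strand count $n$, not on $k$ --- identifies this word with $(\sigma_{n-1}\cdots\sigma_1)^{nk+1}$, whence the whole word is $(\sigma_{n-1}\cdots\sigma_1)^{n^2k+1}=T(n,n^2k+1)$. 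Without this identity (or an equivalent explicit cascade) the transformation is an announcement, not a proof.

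Moreover, the specific inductive scheme you propose is doubtful as stated. You induct on $k$, claiming each block $(\Delta_{n+1}^2)^{n-1}$ is promoted to $(\Delta_n^2)^{n}$ at a cost of $\binom{n}{2}$ deletions, but your only evidence is that the crossing counts agree, which is necessary and far from sufficient. Writing $\Delta_{n+1}^2=\Delta_n^2\,W$ with $W=\sigma_n\cdots\sigma_1\sigma_1\cdots\sigma_n=\sigma_nA\sigma_n$ and $A=\sigma_{n-1}\cdots\sigma_1\sigma_1\cdots\sigma_{n-1}$, a block contributes $W^{n-1}=\sigma_nA(\sigma_n\sigma_n)A\cdots A\sigma_n$: the intra-block deletions leave two boundary $\sigma_n$'s that must be paired with the neighboring block or with the tail $\sigma_n\cdots\sigma_1$, so the blocks interact and the step is not block-local; and at each lower level the deletions leave a single surviving generator behind, so a block's residue does not close up into bare full twists $(\Delta_n^2)^n$ but into wrap words with interleaved singletons that only assemble correctly once, globally, via Lemma \ref{iso3}. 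Your strategy is rescuable --- the paper's proof is exactly the global version of it --- but the inductive step in the per-block form you claim would fail, and repairing it amounts to formulating and proving Lemma \ref{iso3}.
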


\begin{thm*}[\ref{cin}]\label{Theorem2}
If $n$ and $k$ are positive integers, then  $T(n,n^2k+n+1) \leq_g T(n+1,(n^2-1)k+n)$.
\end{thm*}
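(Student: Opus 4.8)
The plan is to prove the adjacency at the level of braid words, using the technique of Section~3. Both knots are positive braid knots: $T(n,n^2k+n+1)$ is the closure of $\delta_n^{\,n^2k+n+1}$ on $n$ strands and $T(n+1,(n^2-1)k+n)$ is the closure of $\delta_{n+1}^{\,(n^2-1)k+n}$ on $n+1$ strands, where $\delta_p=\sigma_1\sigma_2\cdots\sigma_{p-1}$ (both second coordinates are coprime to the first, so these are genuine knots). Recall that for a positive braid knot the unknotting number equals the Seifert genus $\tfrac{c-s+1}{2}$, with $c$ the word length and $s$ the number of strands; in particular $u(T(p,q))=\tfrac{(p-1)(q-1)}{2}$, so
\[
u\bigl(T(n+1,(n^2-1)k+n)\bigr)-u\bigl(T(n,n^2k+n+1)\bigr)=\frac{n(n-1)k}{2}.
\]
Since $u(K_2)\le u(K_1)+d_g(K_1,K_2)$ gives $d_g(K_1,K_2)\ge u(K_2)-u(K_1)$ for all knots, it suffices to exhibit a sequence of exactly $\tfrac{n(n-1)k}{2}$ crossing changes carrying $T(n+1,(n^2-1)k+n)$ to $T(n,n^2k+n+1)$; then $d_g$ meets its lower bound and the adjacency $\leq_g$ follows.

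First I would recast each crossing change as a \emph{change-and-cancel} move on the positive word: change a positive $\sigma_i$ to $\sigma_i^{-1}$ and then cancel it against a neighbouring $\sigma_i$ by a Reidemeister~II move. Each such move costs one crossing change, keeps the word positive, fixes the strand number, and lowers $c$ by two, hence lowers the genus by exactly one; braid relations and conjugations are free isotopies, and a positive destabilization lowers $c$ and $s$ each by one and so preserves the genus. Thus it suffices to transform $\delta_{n+1}^{\,(n^2-1)k+n}$ into $\delta_n^{\,n^2k+n+1}$ using braid relations, conjugations, exactly $\tfrac{n(n-1)k}{2}$ change-and-cancel moves, and a single destabilization, which accounts for the net drop of one strand.

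To build the sequence I would exploit the full-twist structure. Writing $(n^2-1)k+n=(n+1)(n-1)k+n$ gives
\[
\delta_{n+1}^{\,(n^2-1)k+n}=\bigl(\delta_{n+1}^{\,n+1}\bigr)^{(n-1)k}\,\delta_{n+1}^{\,n},
\]
exhibiting $(n-1)k$ central full twists on $n+1$ strands, while on $n$ strands $n^2k+n+1=n(nk+1)+1$ isolates the corresponding full twists. I would process the full twists in $k$ identical blocks of $n-1$, performing every manipulation on $n+1$ strands and deferring the lone destabilization to the very end. Within a block I would use commutations together with $\sigma_n\sigma_{n-1}\sigma_n=\sigma_{n-1}\sigma_n\sigma_{n-1}$ to gather the $\sigma_n$ crossings, cancel them in pairs by change-and-cancel moves, and trade the remainder down to lower-index generators, arranging that each block costs exactly $\tfrac{n(n-1)}{2}$ crossing changes. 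After all $k$ blocks the word is a positive $(n+1)$-strand braid in which $\sigma_n$ occurs exactly once, and a single destabilization removes it to leave $\delta_n^{\,n^2k+n+1}$. The running total $k\cdot\tfrac{n(n-1)}{2}=\tfrac{n(n-1)k}{2}$ matches the required genus drop, and the degenerate case $k=0$ is the consistency check $T(n,n+1)=T(n+1,n)$ with no crossing changes; I expect the block construction to closely parallel the proof of Theorem~\ref{ci}.

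The main obstacle is exactly this combinatorial bookkeeping. I must specify the order of braid relations that brings the $\sigma_n$ generators of each block together, verify that every cancellation is legal and creates no unwanted crossings, keep the intermediate words positive throughout, and confirm that the $\sigma_n$ count descends from $(n^2-1)k+n$ to a single occurrence with precisely one destabilization at the end. Getting the tally to be \emph{exactly} $\tfrac{n(n-1)k}{2}$, rather than merely at most that, is what upgrades the construction from an inequality to the equality $d_g(K_1,K_2)=u(K_2)-u(K_1)$ required for Gordian adjacency; controlling the positivity and the $\sigma_n$-consolidation simultaneously is the delicate point.
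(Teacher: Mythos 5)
Your high-level framing is correct, and your plan is in fact the paper's plan: the paper also writes $T(n+1,(n^2-1)k+n)$ as $(\Delta_{n+1}^2)^{(n-1)k}(\sigma_n\cdots\sigma_1)^n$, unfurls each full twist via Lemma \ref{iso2} into $\Delta_n^2(\sigma_n\cdots\sigma_1\sigma_1\cdots\sigma_n)$, cancels paired $\sigma_n$'s by crossing changes, uses a single Markov destabilization, and spends exactly $\frac{n(n-1)}{2}k$ crossing changes. One remark on your accounting: the triangle-inequality argument you give to certify that an exact count yields adjacency is valid, but it is also unnecessary in the paper's framework --- by Lemma \ref{one} and Theorem \ref{rules}, any passage between positive braid words using the five rules is automatically a Gordian adjacency, since each crossing change lowers $\frac{\ell-n+1}{2}$ by exactly one; no counting is required.

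However, there is a genuine gap: the decisive combinatorial content is asserted rather than constructed, and two specific pieces are missing. First, you never handle the residual $\delta_{n+1}^{\,n}$, which is precisely where this theorem differs from Theorem \ref{ci}. The paper's key extra step is the braid isotopy $(\sigma_n\cdots\sigma_1)^n=\sigma_n(\sigma_{n-1}\cdots\sigma_1)^{n+1}=\sigma_n\cdots\sigma_1\Delta_n^2$, which produces one additional central full twist on $n$ strands; it is this extra $\Delta_n^2$ (giving $(\Delta_n^2)^{(n-1)k+1}$ in front) that shifts the output of the Theorem \ref{ci} computation from $T(n,n^2k+1)$ to $T(n,n^2k+n+1)$, and its tail $\sigma_n\cdots\sigma_1$ is what supplies the lone $\sigma_n$ you intend to destabilize as well as the interspersed single letters $\sigma_{n-1},\dots,\sigma_1$ needed later. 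As written, your proposal has no account of where the ``$+n$'' in the second index comes from. Second, your claim that each block of $n-1$ full twists ``costs exactly $\frac{n(n-1)}{2}$ crossing changes'' can only be realized by a specific non-uniform demotion pattern: within each block one wrap must be kept at level $n-1$, one demoted to level $n-2$, and so on down to level $1$ (costing $1,2,\dots,n-1$ changes respectively), so that across all $k$ blocks you accumulate exactly $k$ wraps at each level --- the staircase word $(\sigma_{n-1}\cdots\sigma_1\sigma_1\cdots\sigma_{n-1})^k\sigma_{n-1}\cdots(\sigma_2\sigma_1\sigma_1\sigma_2)^k\sigma_2(\sigma_1\sigma_1)^k\sigma_1$ that Lemma \ref{iso3} reassembles into $(\sigma_{n-1}\cdots\sigma_1)^{nk+1}$. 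A uniform reduction of the blocks does not terminate in a power of $\sigma_{n-1}\cdots\sigma_1$. You correctly identify this bookkeeping as the main obstacle, but it is exactly the content of Lemmas \ref{iso1}, \ref{iso2}, and \ref{iso3} together with the residual-twist identity above, so the proposal as it stands is a correct strategy but an incomplete proof.
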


\begin{thm*}[\ref{thrfou}]\label{Theorem3}
If $a$ and $b$ are positive integers, then  $T(3,a) \leq_g T(4,b)$ if $a\leq \frac{9b+5}{8}$.
\end{thm*}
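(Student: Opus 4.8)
The plan is to prove the adjacency directly by exhibiting an explicit sequence of crossing changes, using that for torus knots $u(T(p,q)) = \tfrac{(p-1)(q-1)}{2}$. Since Gordian distance obeys the triangle inequality $u(K_2) \le u(K_1) + d_g(K_1,K_2)$, one always has $d_g(T(3,a),T(4,b)) \ge u(T(4,b)) - u(T(3,a))$. Writing $N := u(T(4,b)) - u(T(3,a)) = \tfrac{3(b-1)}{2} - (a-1) = \tfrac{3b-2a-1}{2}$, it therefore suffices to produce a sequence of \emph{exactly} $N$ crossing changes carrying $T(4,b)$ to $T(3,a)$: equality then forces $d_g(T(3,a),T(4,b)) = N$, which is the definition of $T(3,a) \leq_g T(4,b)$.

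First I would reduce the two-parameter statement to a one-parameter family of extremal cases. By Feller's Theorem~\ref{fel2} we have $T(3,a') \leq_g T(3,a)$ whenever $a' \le a$ (both coprime to $3$), so by transitivity of Gordian adjacency it is enough to establish $T(3,a) \leq_g T(4,b)$ for the largest admissible $a$ at each $b$, namely the largest integer coprime to $3$ with $a \le \tfrac{9b+5}{8}$; every smaller valid $a$ then follows immediately. One checks that the boundary points $(9k+4,\,8k+3)$ of this line are exactly the instances of our Theorem~2, while the points $(9k+1,\,8k+1)$ lie just inside it and are our Theorem~1, so the construction must be sharp enough both to recover these and to interpolate densely between them — in particular the residues $b \equiv 5,7 \pmod 8$ are \emph{not} reachable from Theorems~1 and~2 by monotonicity alone, which is why a genuine construction is required.

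The core of the argument is the braid-word technique of Section~3. I would represent $T(4,b)$ as the closure of the positive braid $(\sigma_1\sigma_2\sigma_3)^b \in B_4$ and work to eliminate the fourth strand. Using the braid relations $\sigma_1\sigma_3 = \sigma_3\sigma_1$ and $\sigma_2\sigma_3\sigma_2 = \sigma_3\sigma_2\sigma_3$ together with conjugation, I would comb the $\sigma_3$-generators together, perform crossing changes $\sigma_3 \mapsto \sigma_3^{-1}$ that each cancel against an adjacent $\sigma_3$ (removing two crossings, hence lowering the genus by exactly one per crossing change), and finally destabilize once the fourth strand crosses the others only once. Iterating this so that the surviving $3$-braid is precisely a positive power $(\sigma_1\sigma_2)^a$, rather than some other positive $3$-braid, is what ties the number of crossing changes to $a$ and $b$.

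The main obstacle is making this bookkeeping exact on two fronts at once. First, every crossing change must be genus-reducing, so that the total is exactly $N$ and not merely an upper bound with slack; this forces a careful choice of which generators to cancel. Second, and more delicately, the leftover $3$-braid must be \emph{exactly} the torus braid $(\sigma_1\sigma_2)^a$, and it is precisely the demand that removing crossings from the fourth strand leaves behind a complete $(\sigma_1\sigma_2)$-pattern that produces the slope $\tfrac98$ and the intercept in $a \le \tfrac{9b+5}{8}$, with the residue of $b$ modulo $8$ and the coprimality of $a$ with $3$ accounting for the floor. I expect this combinatorial count, rather than any single topological input, to be the heart of the proof, exactly paralleling the role played by the bound $n \le \tfrac43 m + \tfrac13$ in Feller's Theorem~\ref{fel3}.
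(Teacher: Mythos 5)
Your strategy coincides with the paper's: reduce to the extremal $a$ in each residue class of $b$ modulo $8$ via Feller's Theorem \ref{fel2} and transitivity of adjacency, observe that $b\equiv 1,3 \pmod 8$ are exactly Theorems \ref{ci} and \ref{cin} with $n=3$, and treat the remaining classes by eliminating the fourth strand of $(\sigma_3\sigma_2\sigma_1)^b$ through combing, crossing changes on adjacent pairs, and a final Markov destabilization. But there is a genuine gap: for $b\equiv 5,7 \pmod 8$ you describe this procedure only generically and then explicitly defer the decisive point --- that the surviving $3$-braid is \emph{exactly} $(\sigma_2\sigma_1)^a$ with $a=\left\lfloor\frac{9b+5}{8}\right\rfloor$ (or one lower when that floor is divisible by $3$, which happens precisely for $b\equiv 5\pmod 8$) --- calling it an ``obstacle'' and an expected ``combinatorial count.'' That count \emph{is} the theorem in these two cases; the paper supplies it as Lemmas \ref{c} and \ref{d}, via explicit sequences built from the splitting $\Delta_4^2=\Delta_3^2\,\sigma_3\sigma_2\sigma_1\sigma_1\sigma_2\sigma_3$ of Lemma \ref{iso2}, e.g. $T(4,8k+5)=(\Delta_3^2)^{2k+1}(\sigma_3\sigma_2\sigma_1\sigma_1\sigma_2\sigma_3)^{2k+1}\sigma_3\sigma_2\sigma_1\rightarrow\cdots\rightarrow(\sigma_2\sigma_1)^{9k+5}$, where after the $\sigma_3$'s are cancelled one removes $\sigma_2$'s in stages and reassembles the torus word through identities such as $\sigma_2\sigma_1\sigma_1\sigma_2\sigma_1\sigma_1=(\sigma_2\sigma_1)^3$. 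Without some such explicit sequence, your argument establishes the theorem only for $b\equiv 1,3\pmod 8$, i.e., only what Theorems \ref{ci} and \ref{cin} already give.

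A separate remark: one of your two ``delicate fronts'' is actually free, and seeing this would have focused your effort where it belongs. You worry that each crossing change must be exactly genus-reducing, ``forcing a careful choice of which generators to cancel,'' lest the construction yield only an upper bound with slack. Within the five-rules calculus this is automatic: by Lemma \ref{one} and Theorem \ref{rules}, every application of the Crossing Change Rule deletes a pair $\sigma_i\sigma_i$ from a positive word, lowering $u=\frac{\ell-n+1}{2}$ by exactly one, and no other rule changes $u$ beyond isotopy; hence \emph{any} passage between positive braid words using the five rules is a Gordian adjacency, with no counting or efficiency check required (the paper's tallies of $CC$s against $u(T(4,b))-u(T(3,a))$ are sanity checks, not logical necessities). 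The only genuine difficulty is your second front --- steering the word to land exactly on $(\sigma_2\sigma_1)^a$ for the extremal $a$ --- and that is precisely the step your proposal leaves unproven.
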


\begin{thm*}[\ref{twofou}]
If $a$ and $b$ are positive integers, then  $T(2,a) \leq_g T(4,b)$ if $a\leq \frac{3b+3}{2}$.
\end{thm*}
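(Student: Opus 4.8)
The plan is to exhibit an explicit sequence of crossing changes taking $T(4,b)$ to $T(2,a)$ whose length equals $u(T(4,b)) - u(T(2,a))$. Since the unknotting numbers of torus knots are $u(T(p,q)) = \tfrac{(p-1)(q-1)}{2}$, we have $u(T(2,a)) = \tfrac{a-1}{2}$ and $u(T(4,b)) = \tfrac{3(b-1)}{2}$, so this difference is $\tfrac{3b-a-2}{2}$. Because $u(K_i) = d_g(O,K_i)$ and $d_g$ satisfies the triangle inequality, $d_g(K_1,K_2) \ge u(K_2) - u(K_1)$ always holds; hence any transformation of $T(4,b)$ into $T(2,a)$ using exactly $\tfrac{3b-a-2}{2}$ crossing changes immediately certifies $T(2,a) \le_g T(4,b)$. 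Note that $a$ and $b$ must both be odd for these to be knots, which makes this count a nonnegative integer whenever $a \le 3b-2$; the content of the theorem is that the genuinely smaller bound $a \le \tfrac{3b+3}{2}$ is what the braid construction actually attains.

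I would work with the positive braid words $T(4,b) = \widehat{(\sigma_1\sigma_2\sigma_3)^b}$ and $T(2,a) = \widehat{\sigma_1^{\,a}}$ and use the Section~3 machinery. For a positive braid word the quantity $\tfrac{c-n+1}{2}$ (with $c$ crossings and $n$ strands) is the genus of its Seifert surface, which I use purely as a bookkeeping invariant: a genus-reducing crossing change turns one $\sigma_i$ into $\sigma_i^{-1}$ and cancels it against a neighboring $\sigma_i$ by a Reidemeister~II move, removing two crossings and lowering this quantity by $1$, while a Markov destabilization removes one strand and one crossing and leaves it unchanged. The reduction then proceeds strand by strand: using the commutation $\sigma_3\sigma_1 = \sigma_1\sigma_3$ and the braid relation $\sigma_3\sigma_2\sigma_3 = \sigma_2\sigma_3\sigma_2$ to herd the $b$ copies of $\sigma_3$ together, I would cancel them in pairs by crossing changes until a single $\sigma_3$ remains, destabilize from four to three strands, and repeat the process on the $\sigma_2$'s to descend from three to two strands, leaving a power $\sigma_1^{a}$. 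The crossing count is the bookkeeping anchor: starting from $3b$ crossings and ending at $a$, the two destabilizations account for $2$ crossings and the remaining $3b-a-2$ must be removed by exactly $\tfrac{3b-a-2}{2}$ genus-reducing changes, which is precisely the budget in which every change must count. Since the two endpoints are torus knots, for which $u = g$ by Kronheimer--Mrowka, this tally equals the unknotting-number difference and closes the argument.

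I expect the main obstacle to be the explicit braid-word combinatorics: selecting which crossings to change and in what order so that every change is genus-reducing (none wasted) and each intermediate braid genuinely destabilizes, together with verifying that this is possible exactly up to $a \le \tfrac{3b+3}{2}$. It is worth emphasizing that merely chaining the known adjacencies, $T(2,a) \le_g T(3,c)$ (Theorem~\ref{fel3}) with $T(3,c) \le_g T(4,b)$ (Theorem~\ref{thrfou}), only yields $a \le \tfrac{9b+7}{6}$, which falls short of $\tfrac{3b+3}{2} = \tfrac{9b+9}{6}$ by $\tfrac13$; the stated bound therefore genuinely requires the direct construction rather than transitivity through an exact intermediate torus knot. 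Finally, once the extremal case (the largest odd $a \le \tfrac{3b+3}{2}$) is established, the full range of $a$ follows by transitivity of $\le_g$ together with $T(2,a) \le_g T(2,a')$ for $a \le a'$, which is an instance of Theorem~\ref{fel2}.
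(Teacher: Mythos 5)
Your global bookkeeping is sound and matches the paper's framework: in the five-rules calculus every application of the Crossing Change Rule on a positive word automatically lowers the unknotting number by one (Lemma \ref{one}), so any five-rules path from $(\sigma_3\sigma_2\sigma_1)^b$ to $\sigma_1^{a}$ certifies adjacency, and the identity $a = 3b - 2 - 2\,(\text{number of crossing changes})$ shows the theorem is equivalent to exhibiting such a path that reaches the largest odd $a \le \frac{3b+3}{2}$. Your observations that chaining Theorems \ref{fel3} and \ref{thrfou} only yields $a \le \frac{9b+7}{6}$, and that the full range of $a$ follows from the extremal case by Theorem \ref{fel2} and transitivity, are both correct and are exactly how the paper closes its argument.

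The gap is that you never construct the extremal path, and that construction is the entire content of the paper's proof (Lemmas \ref{2a} and \ref{2b}, split by $b \equiv 1, 3 \pmod 4$). Your plan to ``herd the $\sigma_3$'s together and cancel in pairs'' is not shown to stop at the extremal value of $a$: since the final $a$ is forced by the total number of crossing changes spent, a clumsy elimination of the $\sigma_3$'s and $\sigma_2$'s does not violate adjacency --- it simply lands at a smaller $a$ --- so nothing in your setup prevents the procedure from undershooting $\frac{3b+3}{2}$, and your worry about changes being ``wasted'' is misplaced in this framework (no change can break adjacency; excess changes only shrink the target). The paper achieves the sharp count not by pairing adjacent $\sigma_3$'s in $(\sigma_3\sigma_2\sigma_1)^b$ but by splitting off full twists via $\Delta_4^2 = \Delta_3^2\,\sigma_3\sigma_2\sigma_1\sigma_1\sigma_2\sigma_3$ (Lemma \ref{iso2}) and commuting so that all $\sigma_3$'s pair up at a cost of only roughly $b/4$ changes, destabilizing, and then --- the delicate part --- eliminating the $\sigma_2$'s in two staged rounds using the rewriting $\Delta_3^2 = \sigma_2\sigma_1\sigma_1\sigma_2\sigma_1\sigma_1$ together with conjugation, so that blocks of $\sigma_1$'s are recycled rather than destroyed. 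You correctly identify this combinatorial construction as ``the main obstacle,'' but flagging it is where the proof has to begin, not end: as written, your proposal establishes only that the crossing-change budget is consistent, not that it is attainable.
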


\begin{thm*}[\ref{w}]\label{Theorem5}
Let $\beta$ in $B_n$ be a positive braid where $\beta=\beta'w$ and $\hat\beta$ and $\hat\beta'$ are knots. If $\hat w$ is a link with $n$ components, then $\hat\beta' \leq_g \hat\beta$.
\end{thm*}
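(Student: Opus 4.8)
The plan is to compare unknotting numbers via the genus of positive braid knots and then realize the Gordian distance by an explicit sequence of crossing changes supported entirely inside $w$. Write $c=\ell(\beta)$ and $c'=\ell(\beta')$ for the word lengths, so $\ell(w)=c-c'$. Since $\hat\beta$ and $\hat\beta'$ are positive braid knots on $n$ strands, their Bennequin surfaces realize the genus and, using the known fact that positive braid knots satisfy $u(K)=g(K)$, we get $u(\hat\beta)=\tfrac{c-n+1}{2}$ and $u(\hat\beta')=\tfrac{c'-n+1}{2}$, whence $u(\hat\beta)-u(\hat\beta')=\tfrac{\ell(w)}{2}$; in particular $\ell(w)$ is even. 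The lower bound $d_g(\hat\beta',\hat\beta)\ge u(\hat\beta)-u(\hat\beta')$ is automatic from the triangle inequality for Gordian distance, since $u(\hat\beta)=d_g(O,\hat\beta)\le d_g(O,\hat\beta')+d_g(\hat\beta',\hat\beta)$. Hence it suffices to produce a sequence of exactly $\tfrac{\ell(w)}{2}$ crossing changes carrying $\hat\beta$ to $\hat\beta'$.

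The hypothesis that $\hat w$ has $n$ components says precisely that $w$ is a \emph{pure} braid, i.e.\ its underlying permutation is trivial. I would reduce the theorem to the following claim: a nontrivial pure positive braid word can, after applying braid relations, be written with two \emph{adjacent} equal generators $\sigma_i\sigma_i$. Granting this, the argument is an induction on $\ell(w)$. Apply braid relations to $w$ (these are isotopies of the closed braid $\widehat{\beta'w}$) to expose a factor $\sigma_i\sigma_i$, perform the single crossing change $\sigma_i\sigma_i\mapsto\sigma_i\sigma_i^{-1}$, and cancel the resulting opposite pair by a Reidemeister~II move. This deletes two crossings of $w$ at the cost of one crossing change, and the shortened word is again positive and again pure, since cancelling a $\sigma_i^2$ does not change the permutation. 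After $\tfrac{\ell(w)}{2}$ such steps $w$ has been reduced to the trivial braid, so $\widehat{\beta'w}$ has been carried to $\widehat{\beta'}=\hat\beta'$. This yields $d_g(\hat\beta',\hat\beta)\le\tfrac{\ell(w)}{2}$, and combined with the lower bound gives equality, i.e.\ $\hat\beta'\leq_g\hat\beta$.

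The crux — and the step I expect to be the main obstacle — is the claim that a nontrivial pure positive braid exposes a square $\sigma_i\sigma_i$ after braid relations, because intervening strands generally block two crossings of a common pair from being commuted together directly. I would resolve this through the theory of positive (Garside) braids: the positive braids admitting \emph{no} braid-relation-equivalent word containing any $\sigma_i^2$ are exactly the simple elements (permutation braids, the divisors of the half-twist $\Delta$), and these are characterized by the property that each pair of strands crosses at most once. Such an element is determined by its underlying permutation, and the only one with trivial permutation is the identity braid. Therefore any nontrivial pure positive braid fails to be simple and must contain an exposable square, which is exactly the input the induction requires. The remaining care is bookkeeping: verifying that positivity and purity are preserved at every stage and that the crossing changes are genuinely performed on $\hat\beta$ while the braid relations are only isotopies of the closure.
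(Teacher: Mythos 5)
Your proposal is correct, but it takes a genuinely different route from the paper's. You translate the hypothesis into purity ($\hat w$ has $n$ components exactly when the underlying permutation of $w$ is trivial) and then invoke Garside theory: the positive braids admitting no positively equivalent word containing a square $\sigma_i\sigma_i$ are exactly the permutation braids (divisors of the half twist $\Delta$), these are determined by their permutations, and the only pure one is trivial; hence any nontrivial pure positive $w$ exposes a square, which one crossing change removes while preserving positivity and purity, and induction on $\ell(w)$ empties $w$. You then match the $\frac{\ell(w)}{2}$ crossing changes against $u(\hat\beta)-u(\hat\beta')=\frac{\ell(w)}{2}$ (from $u=\frac{\ell-n+1}{2}$) and the triangle-inequality lower bound, obtaining adjacency directly from the definition. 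The paper instead stays inside its elementary five-rules framework: Lemma \ref{AA} inductively rewrites $w$ so that at most one $\sigma_{n-1}$ survives, and the component count rules out exactly one --- a single $\sigma_{n-1}$ would permit a Markov destabilization expressing $w'$ on $n-1$ strands, contradicting that $\hat w'$ has $n$ components --- so iterating down through the generators deletes $w$ entirely, and Theorem \ref{rules} then gives $\hat\beta'\leq_g\hat\beta$ with no counting of crossing changes at all, since Lemma \ref{one} guarantees each application of the Crossing Change Rule drops the unknotting number by exactly one. What your route buys is a conceptual reading of the hypothesis (purity) and a clean reduction to a standard classification (Garside, Elrifai--Morton), at the cost of citing that external theory; the square-exposing claim is precisely where intervening strands could otherwise obstruct, so you should reference it precisely rather than gesture at it. What the paper's route buys is self-containedness and, through its machinery, the immediate conclusion that the sequence is a positive path --- though your argument yields this too, as every intermediate closure $\widehat{\beta'w_k}$ is a positive braid knot. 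Underneath, the two proofs share the same engine: the $n$-component hypothesis is exactly what guarantees no stray single generator survives, phrased as a Markov-destabilization contradiction in the paper and as triviality of the residual simple element in yours.
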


Using our new techniques, we present a proof in Section 5 showing that every positive braid knot can be unknotted in a way such that every intermediate knot is a positive braid knot. We also prove that every positive braid knot has only a finite number of these positive unknotting sequences.

\begin{thm*}[\ref{posunk}]\label{Theorem6}
Every positive braid knot has an unknotting sequence that consists of only positive braid knots.
\end{thm*}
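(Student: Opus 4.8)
The plan is to induct on the Seifert genus $g(K)$, using the fact that for a positive braid knot presented by a positive word $\beta \in B_n$ of length $c$ one has $g(K) = (c-n+1)/2$ (Seifert's algorithm on the closed-braid diagram produces a fiber surface, which is genus-minimizing by Stallings). The base case $g=0$ is the unknot, whose unknotting sequence is itself. For the inductive step I want to exhibit a single crossing change carrying $K$ to a positive braid knot $K'$ with $g(K')=g(K)-1$ and $K' \le_g K$, and then splice the sequence furnished by the inductive hypothesis for $K'$ below $K$.

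The engine for one step is Theorem~\ref{w} applied with $w=\sigma_i^2$. Indeed $\sigma_i^2$ is a pure braid, so its closure in $B_n$ has $n$ components; hence if I can rewrite $\beta$, using braid relations and conjugation (cyclic permutation of the word, which preserves the closure), in the form $\beta = \beta'\sigma_i^2$, then Theorem~\ref{w} gives $\widehat{\beta'} \le_g \hat\beta$ as soon as $\widehat{\beta'}$ is again a knot. Connectedness is automatic: deleting an adjacent $\sigma_i^2$ does not change the underlying permutation (the square $s_i^2$ is trivial in $S_n$), so $\widehat{\beta'}$ has the same $n$-cycle permutation as $\hat\beta$ and is therefore a knot. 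The length drops by $2$, so $g(\widehat{\beta'}) = g(\hat\beta)-1$; in particular $K':=\widehat{\beta'} \ne K$, and since $K$ and $K'$ are related by one crossing change we get $d_g(K,K')=1$, which combined with $K'\le_g K$ forces $u(K)-u(K')=1$. Thus the passage $K \rightsquigarrow K'$ is exactly one step of an optimal unknotting sequence through positive braid knots, and the induction closes.

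The crux --- and the step I expect to be the main obstacle --- is the purely combinatorial claim that a positive braid knot with $g\ge 1$ always admits a representative word containing two adjacent equal generators $\sigma_i\sigma_i$. That some generator repeats is easy: a knot forces the permutation of $\beta$ to be an $n$-cycle, so every $\sigma_1,\dots,\sigma_{n-1}$ occurs, and $g\ge 1$ gives $c\ge n+1$, so by pigeonhole some generator occurs at least twice. Turning a repetition into an adjacency is the delicate part: taking a closest pair of equal generators $\sigma_i$, the letters strictly between them commute with $\sigma_i$ unless they involve $\sigma_{i\pm1}$, and the latter must be cleared using the braid relations $\sigma_i\sigma_{i\pm1}\sigma_i=\sigma_{i\pm1}\sigma_i\sigma_{i\pm1}$ together with far commutation and cyclic rotation. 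I would control this with a minimality argument on the gap between the pair, the subtle point being to rule out the ``square-free'' configurations that do occur for non-knots (for example the palindrome $\sigma_1\sigma_2\sigma_3\sigma_2\sigma_1$, whose closure is a three-component link) by genuinely invoking the $n$-cycle condition. Establishing this lemma is where the real work lies; granting it, the induction above yields the desired positive unknotting sequence.
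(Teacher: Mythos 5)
Your reduction is sound as far as it goes: $\sigma_i^2$ is a pure braid, so its closure in $B_n$ indeed has $n$ components; deleting an adjacent $\sigma_i^2$ preserves the underlying $n$-cycle, so $\widehat{\beta'}$ is again a knot; the length drops by $2$, so $u$ drops by exactly $1$ by the formula $u=\frac{\ell-n+1}{2}$; hence that single crossing change is a step of an optimal unknotting sequence and your genus induction would close. (In fact Theorem \ref{w} is overkill here --- the Crossing Change Rule together with Lemma \ref{one} already gives this step directly.) But the argument is genuinely incomplete at the point you yourself flag: the claim that every positive braid word whose closure is a knot of genus at least $1$ can be brought, by braid relations and conjugation alone, to contain an adjacent $\sigma_i\sigma_i$. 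You offer only a plan (``take a closest pair, clear the intervening $\sigma_{i\pm1}$, minimize the gap''), and as stated that plan runs exactly into the square-free-type configurations you mention; ruling these out via the $n$-cycle condition is the entire content of the theorem, not a detail to be granted. Until that lemma is proved, nothing has been proved.

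The paper's route avoids needing your stronger ``isotopy-only square'' lemma altogether, which is what makes its proof a two-line corollary of Section 3. Lemma \ref{AA} is an induction that \emph{interleaves} crossing changes with the isotopies: recursively reduce the subword between two top-index generators so it carries at most one $\sigma_{n-1}$; then either slide the two $\sigma_n$ together by the Distant Generators Rule and cancel them with a crossing change, or absorb one by the Neighboring Generators Rule; destabilize the last $\sigma_n$ and recurse on $n$ (Lemma \ref{two}). Because earlier crossing changes are permitted to set up later adjacencies, no knot or genus hypothesis is needed at intermediate stages, and optimality of every crossing change is automatic from Lemma \ref{one}. If you want to salvage your approach, note that your crux lemma actually \emph{follows} from this machinery: run the Lemma \ref{two} process on $\beta$; every rule except the Crossing Change Rule preserves the closure, so at the instant of the first crossing change the current positive word still closes to $K$ and has been brought by isotopy, conjugation, and destabilization to contain an adjacent $\sigma_i^2$. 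Proving it from scratch by your closest-pair scheme would, in effect, require rebuilding an induction equivalent to Lemma \ref{AA}, so the detour through genus buys nothing over the paper's argument.
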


\begin{thm*}[\ref{finite}]\label{Theorem7}
For every positive integer $m$, there exist a finite number of positive braid knots with unknotting number $m$.
\end{thm*}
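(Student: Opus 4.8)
The plan is to show that the unknotting number controls the Seifert genus of a positive braid knot, and that there are only finitely many positive braid knots of bounded genus. Concretely, I would establish two facts: first, that $g(K) \le u(K)$ for every positive braid knot $K$; and second, that a positive braid knot of genus $g$ admits a positive braid representative on at most $2g+1$ strands and with at most $4g$ crossings. Granting these, if $u(K) = m$ then $K$ has a representative that is a positive word of length at most $4m$ in the generators $\sigma_1, \dots, \sigma_{2m}$; there are only finitely many such words, each closing to at most one knot, so only finitely many knots arise.

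For the genus bound, recall that for a positive braid $\beta \in B_n$ with $c$ crossings whose closure $\widehat\beta$ is a knot, the Bennequin (Seifert-algorithm) surface has $n$ disks and $c$ bands, hence Euler characteristic $n - c$; since this surface is of minimal genus for a positive braid, $g(\widehat\beta) = \tfrac{1}{2}(c - n + 1)$, and in particular $c - n + 1 = 2g$ is independent of the chosen positive representative. To see $g(K) \le u(K)$, note that any unknotting sequence realises $u(K)$ crossing changes, each of which alters the slice genus by at most one; since positive braid knots satisfy $g_4(K) = g(K)$ and the unknot has genus $0$, we obtain $g(K) = g_4(K) \le u(K)$. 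Alternatively, Theorem \ref{posunk} produces an unknotting sequence of positive braid knots in which each step removes a clasp $\sigma_i^2$ and so lowers the genus by exactly one, giving the sharper equality $u(K) = g(K)$.

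The crux is the bound on the number of strands. Fix a positive braid representative $\beta \in B_n$ of $K$ with $n$ as small as possible, and suppose for contradiction that some generator $\sigma_i$ occurs exactly once in $\beta$. If $i = 1$ or $i = n-1$, then after cyclically permuting the word so that this lone generator sits at the end, a Markov destabilization removes it and produces a positive representative on $n-1$ strands, contradicting minimality. If instead $1 < i < n-1$, consider the Bennequin surface $F$ of $\widehat\beta$: the disks $D_1, \dots, D_i$ are joined to the disks $D_{i+1}, \dots, D_n$ by the single band coming from $\sigma_i$, since every other generator acts within one of the two blocks of strands. Cutting along that band exhibits $F$ as two fiber surfaces glued along a single band, so $K = \widehat{\beta_L} \mathbin{\#} \widehat{\beta_U}$, where $\beta_L \in B_i$ and $\beta_U \in B_{n-i}$ are the positive sub-braids obtained by keeping only the generators internal to each block. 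Re-representing this connected sum by the standard construction---placing $\beta_L$ on strands $1, \dots, i$ and a relabelled $\beta_U$ on strands $i, \dots, n-1$, sharing strand $i$---yields a positive representative of $K$ on $n-1$ strands, again contradicting minimality. Hence every generator occurs at least twice, so $c \ge 2(n-1)$; combined with $c = 2g + n - 1$ this forces $n \le 2g + 1$ and therefore $c \le 4g \le 4m$.

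With both ingredients in hand the finiteness statement follows at once: every positive braid knot with $u(K) = m$ has genus at most $m$ and hence a positive braid representative on at most $2m+1$ strands using at most $4m$ crossings, and the set of positive words subject to these bounds is finite. I expect the genuine difficulty to lie in the middle-generator case of the strand bound: one must verify carefully that a generator appearing exactly once really does split the Bennequin surface as a single-band (connected-sum) plumbing, and that the resulting summands are themselves honest positive braid closures so that the connected sum can be re-braided on fewer strands. The destabilization step and the final counting are routine by comparison.
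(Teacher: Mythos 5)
Your proposal is correct and is essentially the paper's own argument: your key step---that in a minimal-strand positive representative every generator occurs at least twice, whence $c=2g+n-1\ge 2(n-1)$ forces $n\le 2g+1$ and $c\le 4g$ before a finite word count---mirrors the paper's Lemmas \ref{gen1}--\ref{gen3} and Theorem \ref{finite}, with your genus detour being the paper's formula $u(\hat\beta)=\frac{\ell-n+1}{2}$ in disguise (since $u=g$ for positive braid knots) and your connected-sum re-braiding of the Bennequin surface being exactly the isotopy of Lemma \ref{gen2} in different language. One small patch: your contradiction only treats a generator occurring \emph{exactly once}, yet you conclude every generator occurs at least twice, so you should also note (as in Lemma \ref{gen1}) that a generator occurring \emph{zero} times splits the closure into a link with at least two components, which is impossible for a knot.
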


\section*{Acknowledgements}
This research was conducted as part of the 2018 SURIEM REU program at Lyman Briggs College of Michigan State University, under the supervision of Dr.~Robert Bell. We gratefully acknowledge support from the National Security Agency (NSA Award No. H98230-18-1-0042), the National Science Foundation (NSF Award No. 1559776), and Michigan State University. We thank our mentors Dr.~Katherine Raoux and David Storey for their guidance throughout this project. In addition, we are grateful to Dr.~Peter Feller for his advice and Eric Zhu for coding support. Finally, we would like to thank the editors of the \textit{Rose-Hulman Undergraduate Mathematics Journal} and the anonymous referee for insightful feedback. This paper was inspired by Feller \cite{Feller}.

\section{Preliminaries: Braids and Torus Knots}
Throughout this paper, we refer to knots by their braid words. In this section, we present the braid group and describe how to represent positive braid knots as braids.

A braid $\beta$ is defined as a set of $n$ strands which begin on a horizontal bar and end on a lower horizontal bar. Each strand may only intersect any horizontal plane once, which allows the strands to cross each other in the specific ways described later \cite{KnotBook}.

The \textit{closure} of a braid, $\hat\beta$, is formed by attaching the top and bottom bar such that the beginning of each strand connects to the end of a strand, forming a \textit{link}, one or more knots \cite{StudyOfBraids}. Every knot can be represented as the closure of some braid \cite{Alexander}.

To represent braids algebraically, we refer to their generators $\sigma_i$'s which represent the $i+1^{st}$ strand from the left crossing over the $i^{th}$ strand (read from top to bottom). If the $i+1^{st}$ strand from the left instead crosses under the $i^{th}$ strand, we denote this by $\sigma^{-1}_i$.

\begin{figure}[H]
\begin{center}
	\begin{center}
		\begin{tikzpicture}
			\braid[number of strands=4, width=.5cm,height=0.3cm,line width=2pt,style strands={1,3}{gray}, style strands={2,4}{blue}] (s1) s_1^{-1};
			\node at (1.25,-1.25){$\sigma_{1}$};
		\end{tikzpicture}
	\hspace{1cm}
		\begin{tikzpicture}
			\braid[number of strands=4, width=.5cm,height=0.3cm,line width=2pt,style strands={1,3}{gray}, style strands={2,4}{blue}] (s2) s_2^{-1};
			(\node at (1.25,-1.25){$\sigma_{2}$};
		\end{tikzpicture}
	\hspace{1cm}
		\begin{tikzpicture}
			\braid[number of strands=4, width=.5cm,height=0.3cm,line width=2pt,style strands={1,3}{gray}, style strands={2,4}{blue}] (s3) s_3^{-1};
			\node at (1.25,-1.25){$\sigma_{3}$};
		\end{tikzpicture}
	\end{center}
\vspace{.5cm}
		\begin{tikzpicture}
			\braid[number of strands=4, width=.5cm,height=0.3cm,line width=2pt,style strands={2,3}{gray}, style strands={1,4}{blue}] (s1) s_1;
			\node at (1.25,-1.25){$\sigma_{1}^{-1}$};
		\end{tikzpicture}
	\hspace{1cm}
		\begin{tikzpicture}
			\braid[number of strands=4, width=.5cm,height=0.3cm,line width=2pt,style strands={1,2}{gray}, style strands={3,4}{blue}] (s2) s_2;
			
			\node at (1.25,-1.25){$\sigma_{2}^{-1}$};
		\end{tikzpicture}
	\hspace{1cm}
		\begin{tikzpicture}
			\braid[number of strands=4, width=.5cm,height=0.3cm,line width=2pt,style strands={1,4}{gray}, style strands={2,3}{blue}] (s3) s_3;
			\node at (1.25,-1.25){$\sigma_{3}^{-1}$};
		\end{tikzpicture}
\end{center}
\caption{The Generators of $B_4$ and Their Inverses}
\label{gens}
\end{figure}
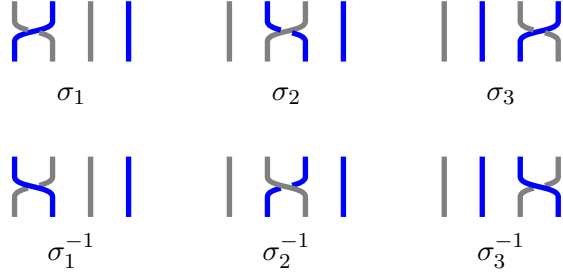

We call $\sigma^{-1}_i$ the inverse of $\sigma_i$ as it is isotopic to the identity element when concatenated with $\sigma_i$. Hence we have that $\sigma_1,\dotsc,\sigma_{n-1}$ are the generators of the braid group $B_n$, which form a group under the operation of concatenation. Note that changing a generator to its inverse corresponds to a crossing change in the knot which the braid represents. We have two relations on the braid group $B_n$, also known as braid isotopies:

\begin{itemize}
\item for any $1 \leq i \leq n-2$, $\sigma_i \sigma_{i+1} \sigma_i = \sigma_{i+1} \sigma_i \sigma_{i+1} $,
\item for any $1 \leq i \leq n-3$ and $i+2 \leq j \leq n-1$, $\sigma_j \sigma_i = \sigma_i \sigma_j$.
\end{itemize}

Both of these braid relations produce isotopic braid closures \cite{Artin}. Given a knot $K$, the \textit{braid index} \normalfont $brd(K)$ of $K$ is the smallest positive integer $n$ such that there exists a braid $\beta$ in $B_n$ where $\hat\beta = K$.

We refer to a positive generator of $B_n$ as a \textit{positive crossing}. A \textit{positive braid} is a braid in which all crossings are positive (we have no inverse generators). A \textit{positive braid knot} is a knot which is isotopic to the closure of some positive braid. While unknotting numbers can be hard to compute in general, we have that for positive braid knots, the unknotting number of $\hat{\beta}$ which is an integer quantity, is given by 
$$u(\hat{\beta})= \frac{\ell-n+1}{2}$$ where $\ell$ 
denotes the length of the braid word (the number of generators it contains) and $n$ the number of strands on which the braid is expressed. This was proven as a lower bound in 1984 \cite{FrenchBook} and as an upper bound in 2004 \cite{Livingston}. 

In our study of torus knots, we usually represent torus knots by using braids that they are the closure of. The torus knot $T(p,q)$ can be represented by the braid word $\sigma_{p-1}\cdots\sigma_1$ repeated $q$ times, which we denote as $(\sigma_{p-1}\cdots\sigma_1)^q$ \cite{KnotBook}.

\begin{figure}[H]
\begin{center}
\begin{tikzpicture}
\braid[width=.5cm,height=0.3cm,line width=2pt,style strands={1,3}{gray}, style strands={2,4}{blue}] s_3^{-1} s_2^{-1} s_1^{-1} s_3^{-1} s_2^{-1} s_1^{-1} s_3^{-1} s_2^{-1} s_1^{-1} s_3^{-1} s_2^{-1} s_1^{-1} s_3^{-1} s_2^{-1} s_1^{-1} ;
\end{tikzpicture}
\end{center}
\caption{T(4,5) as a Braid in $B_4$}
\end{figure}
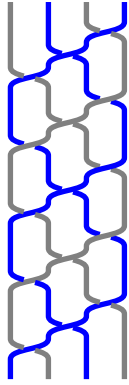

By flipping this braid representation over a vertical axis (viewing it from behind), we see that $(\sigma_1\cdots\sigma_{p-1})^q$ is also a valid braid representation of $T(p,q)$. Since torus knots are a particular class of positive braid knots, the braid representation of torus knots tells us that $$u\left(T(p,q)\right)=\frac{(p-1)(q-1)}{2}.$$

\section{Rules for Constructing Gordian Adjacencies}
Once we have represented two knots as positive braids, we wish to manipulate these braids in order to uncover Gordian adjacencies between the original knots. We have reduced these manipulations to the application of five rules. Figure \ref{fig:fiverules} shows a graphical representation of these five rules, which we summarize in the following paragraph.

The Distant Generators Rule and Neighboring Generators Rule are relations on the braid group and therefore do not change the braid closure. The Conjugation Rule also does not change the braid closure. These three rules are bidirectional, while the latter two rules are not. The Markov Destabilization Rule similarly does not change the closure of the braid by Markov's Theorem \cite{Markov}. The Crossing Change Rule is the only rule that changes the closure of a braid up to knot isotopy. It arises from performing a crossing change which switches a generator to its inverse and canceling out the resulting pair. 

Throughout this paper, we use an equals sign to denote that the closure of our new braid word is isotopic to the closure of the braid word preceding it and an arrow to denote when we have made one or more crossing changes via the Crossing Change Rule. We note that none of these five rules change the number of components in the closure of our braid word.

\begin{figure}[H]
\begin{center}
\begin{multicols}{2}
\begin{center}
\begin{tikzpicture}
    \braid[height=15pt, width = 15pt,
    style all floors={fill=blue!30!, fill opacity=.35},
    style floors={3}{dashed,fill=gray!30!, fill opacity=.35},
    floor command={
      \fill (\floorsx,\floorsy) rectangle (\floorex,\floorey);
    }, line width=2pt](braid) at (2,0)  a_2^{-1}  | a_1^{-1} |  a_3^{-1}  a_2^{-1} ;
  \end{tikzpicture}
  \hspace{1em}
\begin{tikzpicture}
  \useasboundingbox (0,0) rectangle (0,4);
  \node at (0,1.3) {$=$};
\end{tikzpicture}
\hspace{1em}
  \begin{tikzpicture}
    \braid[height=15pt, width = 15pt,
    style all floors={fill=gray!30!, fill opacity=.35},
    style floors={3}{dashed,fill=blue!30!, fill opacity=.35},
    floor command={
      \fill (\floorsx,\floorsy) rectangle (\floorex,\floorey);
    }, line width=2pt](braid) at (2,0)  a_2^{-1} | a_3^{-1} | a_1^{-1}  a_2^{-1} ;
  \end{tikzpicture}\\
\textbf{Distant Generators Rule:} If $|i-j|>1$, then $\sigma_i\sigma_j=\sigma_j\sigma_i$.\\
\end{center}

\begin{center}
\begin{tikzpicture}
  \braid[height=20pt, width = 20pt,line width=2pt, style strands={1,2}{gray}, style   strands={3}{blue}](braid) a_1^{-1} a_2^{-1} a_1^{-1} ;
\end{tikzpicture}
\hspace{1em}
\begin{tikzpicture}
  \useasboundingbox (0,0) rectangle (0,4);
  \node at (0,1.3) {$=$};
\end{tikzpicture}
\hspace{1em}
\begin{tikzpicture}
  \braid[height=20pt, width = 20pt, line width=2pt, style strands={1,2}{gray}, style   strands={3}{blue}](braid) a_2^{-1} a_1^{-1} a_2^{-1};
\end{tikzpicture}\\
\textbf{Neighboring Generators Rule:} For any $1\le i\le n-2$, $\sigma_i\sigma_{i+1}\sigma_i =\sigma_{i+1}\sigma_i\sigma_{i+1}$.\\
\end{center}
\end{multicols}
\end{center}

\begin{center}
\begin{multicols}{2}
\begin{center}
\includegraphics[height=97pt]{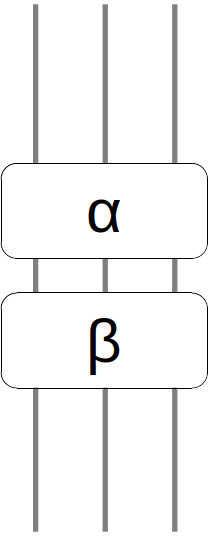}
\hspace{1em}
\begin{tikzpicture}
  \useasboundingbox (0,-2) rectangle (0,2);
  \node at (0,0) {$=$};
\end{tikzpicture}
\hspace{1em}
\includegraphics[height=97pt]{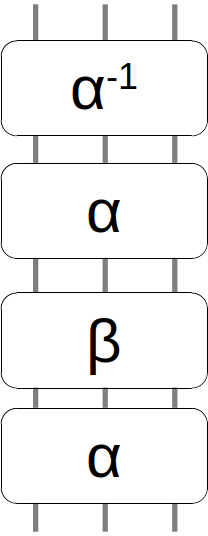}
\hspace{1em}
\begin{tikzpicture}
  \useasboundingbox (0,-2) rectangle (0,2);
  \node at (0,0) {$=$};
\end{tikzpicture}
\hspace{1em}
\includegraphics[height=97pt]{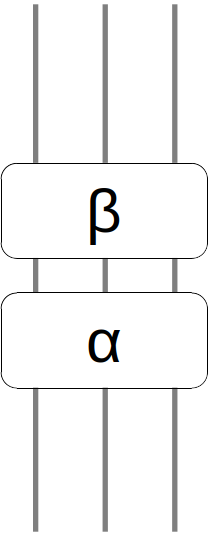}\\
\textbf{Conjugation Rule:} The closure of the braid $\alpha\beta$ is isotopic to the closure of $\beta\alpha$ via a conjugation by $\alpha$.
\end{center}
\begin{center}
\begin{tikzpicture}
  \braid[height=15pt, width = 15pt, line width=2pt,blue, style  strands={3}{gray}, style strands={1,2}{draw=none}](braid) a_3^{-1} a_3^{-1} a_4^{-1} a_3^{-1} a_3^{-1} ;
\end{tikzpicture}
\hspace{1em}
\begin{tikzpicture}
  \useasboundingbox (0,0) rectangle (0,4);
  \node at (0,1.5) {$=$};
\end{tikzpicture}
\hspace{1em}
\begin{tikzpicture}
  \braid[height=15pt, width = 15pt, line width=2pt,blue, style  strands={1}{gray},style strands={3,4}{draw=none}](braid) a_1^{-1} a_1^{-1}  a_3 a_1^{-1} a_1^{-1} ;
\end{tikzpicture}\\
\textbf{Markov Destabilization Rule:} If the braid word contains $\sigma_n$ and $i<n$ for every other $\sigma_i$ in the braid word, then $\sigma_n$ can be deleted.
\end{center}

\end{multicols}
\end{center}

\begin{center}
\begin{tikzpicture}
  \braid[height=15pt, width = 15pt,
  style all floors={fill=yellow},
  floor command={
  \fill (\floorsx,\floorsy) rectangle (\floorex,\floorey);
  }, line width=2pt, number of strands=3, style strands={3,5}{gray}, style   strands={4}{blue}, style strands={1,2}{draw=none}](braid) a_4^{-1} a_3^{-1} a_4^{-1} a_4^{-1} a_3^{-1};
\end{tikzpicture}
\hspace{.75em}
\begin{tikzpicture}
  \useasboundingbox (0,0) rectangle (0,4);
  \node at (0,1.5) {$\rightarrow$};
\end{tikzpicture}
\hspace{.75em}
\begin{tikzpicture}
  \braid[height=15pt, width = 15pt,
  style all floors={fill=yellow},
  floor command={
  \fill (\floorsx,\floorsy) rectangle (\floorex,\floorey);
  }, line width=2pt, number of strands=3, style strands={1,3}{gray}, style   strands={2}{blue}](braid) a_2^{-1} a_1^{-1} a_2^{-1} a_2 a_1^{-1};
\end{tikzpicture}
\hspace{.75em}
\begin{tikzpicture}
  \useasboundingbox (0,0) rectangle (0,4);
  \node at (0,1.5) {$=$};
\end{tikzpicture}
\hspace{.75em}
\begin{tikzpicture}
  \braid[height=15pt, width = 15pt, line width=2pt, number of strands=5, style strands={1,3}{gray}, style strands={2}{blue}, style strands={4,5}{draw=none}](braid) a_2^{-1} a_1^{-1} a_4 a_4 a_1^{-1};
\end{tikzpicture}\\
\textbf{Crossing Change Rule:} If the braid word contains\\$\sigma_i\sigma_i$, then these two letters can be deleted.\\
\end{center}

\caption{Examples and Descriptions of the Five Rules}
\label{fig:fiverules}
\end{figure}

\begin{lem}\label{one}
If we start with a positive braid word $\beta$ whose braid closure is a knot and, by the successive application of the five rules, change $\beta$ into the empty braid word, then the sequence of knots corresponding to the closures of all braid words in the process (up to isotopy) forms an unknotting sequence of $\hat\beta$.
\end{lem}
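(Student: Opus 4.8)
The plan is to verify that the sequence of braid closures produced by applying the five rules satisfies the two defining conditions of an unknotting sequence: that consecutive closures have Gordian distance exactly one, and that their unknotting numbers differ by exactly one. The key observation is that the five rules split into two types. The first three rules (Distant Generators, Neighboring Generators, Conjugation) together with Markov Destabilization do not change the isotopy class of the closure at all, as noted in the discussion preceding the lemma; hence they produce identical knots and contribute no new entries to the unknotting sequence. Only the Crossing Change Rule alters the knot type, so the genuine steps of the sequence correspond precisely to applications of that rule.

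First I would argue that each application of the Crossing Change Rule realizes a single crossing change, so that if $\hat\beta_i$ and $\hat\beta_{i+1}$ are the closures immediately before and after such an application, then $d_g(\hat\beta_{i+1}, \hat\beta_i) \le 1$. This follows from the description of the rule: deleting a pair $\sigma_i\sigma_i$ is obtained by switching one of those generators to $\sigma_i^{-1}$ (a single crossing change) and then cancelling the resulting $\sigma_i\sigma_i^{-1}$ by a braid isotopy. Second, and this is the crucial point, I would show that throughout the process every intermediate braid word remains \emph{positive} and its closure remains a \emph{knot} (one component), which lets me invoke the positive-braid unknotting-number formula $u(\hat\beta)=\frac{\ell-n+1}{2}$. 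The preservation of positivity is immediate since none of the rules introduce inverse generators into the final word (the Crossing Change Rule removes two positive generators), and the remark in the text guarantees that none of the five rules changes the number of components, so each closure stays a knot.

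With the formula in hand, I would compute that each Crossing Change Rule application reduces the word length $\ell$ by exactly $2$ while leaving the strand number $n$ unchanged (the rule acts internally on a fixed number of strands), so $u$ drops by exactly $1$: that is, $u(\hat\beta_i)-u(\hat\beta_{i+1})=1$. Combined with $d_g(\hat\beta_{i+1},\hat\beta_i)\le 1$, the reverse triangle inequality $d_g(\hat\beta_{i+1},\hat\beta_i)\ge u(\hat\beta_i)-u(\hat\beta_{i+1})=1$ forces $d_g=1$. Thus both unknotting-sequence conditions hold at each genuine step. Finally, since the process terminates at the empty braid word, whose closure is the unknot, the full list of distinct intermediate knots is an unknotting sequence of $\hat\beta$.

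The step I expect to be the main obstacle is the bookkeeping around the strand count and the unknotting-number formula when the Markov Destabilization Rule is applied, since destabilization changes $n$ (and simultaneously $\ell$) while leaving the knot fixed; I must check that the formula $\frac{\ell-n+1}{2}$ gives a consistent value across a destabilization so that it does not spuriously register as a change in $u$. Destabilization removes one generator $\sigma_n$ and reduces the strand number by one, so $\ell$ and $n$ each decrease by one and $\frac{\ell-n+1}{2}$ is invariant, consistent with the closure being unchanged; verifying this compatibility, together with confirming that the closure genuinely remains a single-component knot at every stage so that the formula applies at all, is the technical heart of the argument.
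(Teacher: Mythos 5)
Your proposal is correct and follows essentially the same route as the paper's proof: both identify the Crossing Change Rule as the only step that changes the knot type and apply the positive-braid formula $u(\hat\beta)=\frac{\ell-n+1}{2}$ to see that each such step decreases $\ell$ by two with $n$ fixed, hence drops $u$ by exactly one. Your additional checks (the reverse triangle inequality forcing $d_g=1$, and the invariance of $\frac{\ell-n+1}{2}$ under Markov destabilization) are details the paper leaves implicit, but they do not change the argument.
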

\begin{proof}
Since none of the five rules change the number of components in our braid closure, this sequence ends with the unknot, the closure of the empty braid word on one strand. The only time we change our braid closure beyond isotopy is when we use the Crossing Change Rule which creates exactly one crossing change. As this is a sequence that starts from a knot and ends with the unknot, to show that this is an unknotting sequence it is enough to prove that every crossing change decreases the unknotting number by one. Recall that the unknotting number of a positive braid knot is given by $\frac{\ell-n+1}{2}$ where $n$ is the number of strands on which the braid knot is expressed and $\ell$ is the length of the braid word \cite{Livingston}. An application of the Crossing Change Rule does not change $n$ but decreases $\ell$ by two. Therefore, it also decreases the unknotting number by one.
\end{proof}

\begin{lem}
\label{AA}
Let $\beta'$ be a subword of a positive braid word $\beta$ where $i\le n$ for all $\sigma_i$ in $\beta'$. Using the five rules, $\beta$ can be made into a braid word that replaces $\beta'$ with a subword containing no more than one $\sigma_n$.
\end{lem}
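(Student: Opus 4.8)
The plan is to induct on $n$, the largest index of a generator appearing in $\beta'$, after first recasting the statement in a purely local form. The three rules Distant Generators, Neighboring Generators, and Crossing Change each act only on a contiguous block of letters, so applying any of them inside the subword $\beta'$ is a legitimate transformation of all of $\beta$ that leaves the letters outside $\beta'$ untouched; Conjugation and Markov Destabilization will not be needed. Thus it suffices to prove the following self-contained claim: any positive word $V$ in $\sigma_1,\dots,\sigma_n$ can be rewritten, using only these three local rules, into a positive word in the same generators containing at most one $\sigma_n$. Applying this to $V=\beta'$ then gives the lemma.

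For the base case $n=1$ the word is $\sigma_1^k$, and repeated use of the Crossing Change Rule on the pattern $\sigma_1\sigma_1$ reduces it to $\sigma_1$ or the empty word. For the inductive step I assume the claim for $n-1$ and run a secondary induction on the number of occurrences of $\sigma_n$ in $V$. If there are at least two, I choose two consecutive ones, writing $V=X\,\sigma_n W\sigma_n\,Y$ where the block $W$ contains no $\sigma_n$, so that $W$ is a positive word in $\sigma_1,\dots,\sigma_{n-1}$. By the inductive hypothesis applied to the subword $W$, I may assume $W$ has already been rewritten so as to contain at most one $\sigma_{n-1}$.

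Two cases then finish the reduction. If the rewritten $W$ contains no $\sigma_{n-1}$, then every letter of $W$ is some $\sigma_i$ with $i\le n-2$, each of which commutes with $\sigma_n$ by the Distant Generators Rule; sliding one $\sigma_n$ across $W$ produces the pattern $\sigma_n\sigma_n$, which the Crossing Change Rule deletes, dropping the count of $\sigma_n$ by two. If instead $W$ contains exactly one $\sigma_{n-1}$, I write $W=P\sigma_{n-1}Q$ with $P,Q$ built from $\sigma_1,\dots,\sigma_{n-2}$; commuting $\sigma_n$ past $P$ and past $Q$ yields $P\,\sigma_n\sigma_{n-1}\sigma_n\,Q$, and the Neighboring Generators Rule rewrites $\sigma_n\sigma_{n-1}\sigma_n$ as $\sigma_{n-1}\sigma_n\sigma_{n-1}$, dropping the count of $\sigma_n$ by one. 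In either case the total number of $\sigma_n$'s strictly decreases, so the secondary induction terminates with at most one $\sigma_n$ remaining, completing the inductive step.

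The main obstacle is exactly this middle step: handling the letters sitting between two occurrences of $\sigma_n$. A naive attempt to commute the two $\sigma_n$'s together fails as soon as a $\sigma_{n-1}$ lies between them, since $\sigma_n$ and $\sigma_{n-1}$ do not commute, and several such $\sigma_{n-1}$'s can block the merge. The device that overcomes this is to first invoke the inductive hypothesis on the intervening block $W$ to thin its top generator $\sigma_{n-1}$ down to at most one occurrence; only then are the two elementary endgames (commute-and-cancel, or a single Neighboring move) available. One should also verify that invoking the hypothesis on $W$ is harmless for the bookkeeping, since it alters only letters strictly between the two chosen $\sigma_n$'s and never changes the overall count of $\sigma_n$, so that the outer induction on the number of $\sigma_n$'s is genuinely decreasing.
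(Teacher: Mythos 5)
Your proof is correct and matches the paper's argument in substance: both reduce to the block between two occurrences of $\sigma_n$, thin it to at most one $\sigma_{n-1}$ by a recursive call at level $n-1$, and then finish by either commuting and cancelling the pair of $\sigma_n$'s or absorbing one via $\sigma_n\sigma_{n-1}\sigma_n=\sigma_{n-1}\sigma_n\sigma_{n-1}$, all using only the three local rules so the rewriting is valid on a subword. The only difference is bookkeeping: the paper inducts on the length of $\beta'$ (and must therefore carry the invariant that a rewriting never lengthens the word to justify its inner call), whereas you induct on $n$ with a secondary induction on the number of occurrences of $\sigma_n$, which secures termination without the length invariant.
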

\begin{proof}
We proceed by induction on $\ell'$, the length of $\beta'$. Our base case is $\ell'=0$, when clearly $\beta'$ is already in the required form for any $n$. Note that our inductive hypothesis requires that the replacement for $\beta'$ does not have any generators with index higher than $n$ and is not longer than $\beta'$ was.

Now, given some $\beta'$ with $\ell'>0$, let $\beta''$ be the subword that includes all of $\beta'$ except for the first generator of $\beta'$. Because $\ell''<\ell'$, we can use the inductive hypothesis with the same value of $n$ to replace $\beta''$ with a subword that contains at most one $\sigma_n$. Now, if the first generator of $\beta'$ is not $\sigma_n$ or if the replacement for $\beta''$ has no $\sigma_n$, we are done, as $\beta'$ is already in the required form.

Otherwise, we have exactly two $\sigma_n$. Let $\gamma$ be the subword between the two $\sigma_n$, not including either $\sigma_n$. As the length of $\gamma$ is less than the original length of $\beta'$, we can use the inductive hypothesis on $\gamma$ with $n-1$. Now, the replacement for $\gamma$ has either one or zero $\sigma_{n-1}$, and no generator with index higher than $n-1$.

If the replacement for $\gamma$ has no $\sigma_{n-1}$, we can then move the second $\sigma_n$ right next to the first using the Distant Generators Rule. Then, we can delete the two $\sigma_n$ via the Crossing Change Rule. This leaves $\beta'$ with no $\sigma_n$, so we are done.

If the replacement for $\gamma$ has one $\sigma_{n-1}$, we can move the two $\sigma_n$ to either side of that $\sigma_{n-1}$ using the Distant Generators Rule. We then use the Neighboring Generators Rule to make $\sigma_n\sigma_{n-1}\sigma_n$ into $\sigma_{n-1}\sigma_n\sigma_{n-1}$. This leaves $\beta'$ with exactly one $\sigma_n$, so we are done.
\end{proof}

The preceding algorithm does not use the Markov Destabilization Rule or the Conjugation Rule, meaning it can be performed locally on any subword of a braid word and still follow the five rules on the whole braid word.

\begin{lem}
\label{two}
Every finite positive braid word can be made into the empty braid word using the five rules.
\end{lem}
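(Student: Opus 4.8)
The plan is to induct on the largest index $m$ of a generator occurring in the braid word $\beta$, decreasing $m$ by at least one at every stage until no generators remain. If $\beta$ is already the empty word, there is nothing to prove, so I would assume that $\sigma_m$ is the highest-index generator appearing in $\beta$ and that $m \ge 1$.

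The first step is to apply Lemma \ref{AA} to the entire word, taking $\beta' = \beta$ and using the value $m$. This is legitimate because every generator of $\beta$ has index at most $m$ and a word counts as a subword of itself. The lemma then replaces $\beta$ by a word that uses no generator of index exceeding $m$ and contains at most one $\sigma_m$. Now there are two cases. If the resulting word contains no $\sigma_m$ at all, then its highest-index generator is at most $m-1$. If instead it contains exactly one $\sigma_m$, then $\sigma_m$ is the unique occurrence of the highest-index generator, so the Markov Destabilization Rule applies and deletes it, again dropping the highest remaining index to at most $m-1$. In either case the maximal generator index has strictly decreased.

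Iterating this procedure, the maximal index is a nonnegative integer that strictly decreases at each stage, so after finitely many steps it reaches $0$; a word with no generators is precisely the empty braid word. (Equivalently, the final stage reduces a word in $\sigma_1$ alone, via Lemma \ref{AA} with $n=1$ and then one destabilization, to the empty word.) I expect the only point requiring care to be termination, which is exactly what the strict decrease of the maximal index guarantees; and since we are only required to reach the empty word, the fact that the Crossing Change Rule (invoked inside Lemma \ref{AA}) alters the braid closure is irrelevant to this argument.
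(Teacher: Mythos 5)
Your proof is correct and follows essentially the same route as the paper's: apply Lemma \ref{AA} to the whole word to reduce to at most one occurrence of the top generator, delete it (if present) by Markov destabilization, and conclude by the strict decrease of the maximal index. Your parenthetical handling of the final stage ($n=1$) and the remark that crossing changes are harmless here are sound and merely make explicit what the paper leaves implicit.
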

\begin{proof}
Let $\sigma_n$ be the letter with the highest subscript in the braid word. By Lemma \ref{AA}, we can make our braid word into a braid word with no more than one $\sigma_n$. If this word has a $\sigma_n$, delete it using the Markov Destabilization Rule. Then do this on the word again, noting that our highest subscript is now lower than it was before. With each repetition, $n$ decreases by at least one, so eventually the braid word will be the empty word. Because Lemma \ref{AA} terminates in a finite number of steps, we will always get to the empty word in a finite number of steps.
\end{proof}
\begin{thm}
\label{rules}
Let $\beta$ and $\beta'$ be positive braid words whose braid closures are knots. If $\beta$ can be made into $\beta'$ using the five rules, then $\hat\beta' \leq_g \hat\beta$.
\end{thm}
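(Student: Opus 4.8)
The plan is to reduce the statement to the two preceding lemmas by assembling a single unknotting process for $\hat\beta$ that happens to pass through $\hat\beta'$. I would start from the hypothesis, which supplies a sequence of applications of the five rules carrying $\beta$ to $\beta'$. To this I would append the sequence guaranteed by Lemma \ref{two}, which tells us that the positive braid word $\beta'$ can itself be reduced all the way to the empty braid word using the five rules. Concatenating these two sequences produces one sequence of rule-applications that transforms $\beta$ into the empty braid word and visits $\beta'$ along the way.

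Next I would apply Lemma \ref{one} to this concatenated process. Its hypotheses demand that we begin with a positive braid word whose closure is a knot and reduce it to the empty braid word via the five rules. The starting word $\beta$ has knot closure by assumption, and the process terminates at the empty word by construction, so Lemma \ref{one} yields that the sequence of isotopy classes of closures arising in the process is an unknotting sequence of $\hat\beta$.

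Finally, since $\beta'$ appears as an intermediate word in the concatenated process, its closure $\hat\beta'$ is one of the knots in the resulting unknotting sequence of $\hat\beta$. Invoking the equivalent characterization of Gordian adjacency noted after Feller's definition—that $K_1 \leq_g K_2$ precisely when $K_1$ occurs in an unknotting sequence of $K_2$—I would conclude $\hat\beta' \leq_g \hat\beta$, as desired.

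Most of the substantive content is already packaged inside Lemmas \ref{one} and \ref{two}, so the one place I expect to need genuine care is the bookkeeping verifying that the concatenated sequence actually satisfies the hypotheses of Lemma \ref{one} at every stage. Concretely, I would confirm that each of the five rules preserves positivity of the braid word and leaves the number of components of the closure unchanged, so that every intermediate closure is again a \emph{knot} rather than a multi-component link; the step deserving the closest scrutiny is the Crossing Change Rule, the only move that alters the closure beyond isotopy, which I would check can neither introduce a negative generator nor split the closure. Once this invariant is in hand, the three-step argument above goes through cleanly.
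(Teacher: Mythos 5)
Your proposal is correct and follows essentially the same argument as the paper: concatenate the given sequence from $\beta$ to $\beta'$ with the reduction of $\beta'$ to the empty word supplied by Lemma \ref{two}, apply Lemma \ref{one} to the combined process to obtain an unknotting sequence of $\hat\beta$, and conclude that $\hat\beta'$, being an intermediate knot in that sequence, satisfies $\hat\beta' \leq_g \hat\beta$. Your additional care in checking that the five rules preserve positivity and the number of closure components is the same invariant the paper establishes before stating Lemma \ref{one}, so no genuinely new verification is needed.
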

\begin{proof}
By Lemma \ref{two}, we know that we can turn $\beta'$ into the empty braid word using the five rules. Because the five rules do not change the number of components in the closure, that empty braid word corresponds to the unknot. Therefore, if we can go from $\beta$ to $\beta'$ using these rules, we can then combine that sequence with the sequence from $\beta'$ to the unknot to be able to go from $\beta$ to the unknot using only the five rules. Using Lemma \ref{one}, we have that this sequence is an unknotting sequence of $\hat\beta$. Therefore, $\hat\beta'$ is an intermediate knot on an unknotting sequence of $\hat\beta$, implying $\hat\beta' \leq_g \hat\beta$.
\end{proof}
Theorem \ref{rules} tells us that if we get from one positive braid word to another using these five rules, then we do not need to check how many crossing changes we have used, as the braid closure of the ending braid word is guaranteed to be Gordian adjacent to the braid closure of the starting braid word.

\section{Sufficient Conditions for Gordian Adjacency}

In this section, we present our main results on Gordian adjacency dealing with certain classes of positive braid knots. We do this constructively using the five rules introduced in the previous section. To shorten the sequence of knots generated by Lemma \ref{AA}, we formulate multi-step moves that are combinations of our rules. 
\begin{lem}
\label{iso1}
If $n$ is a positive integer and $\beta$ is a positive braid on $n+1$ strands with no $\sigma_n$ in its braid word, then $\beta\sigma_n\cdots\sigma_1\sigma_1\cdots\sigma_n=\sigma_n\cdots\sigma_1\sigma_1\cdots\sigma_n\beta$ as braids (not allowing conjugation or Markov destabilization).
\end{lem}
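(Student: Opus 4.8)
The plan is to prove the apparently stronger fact that the word $W := \sigma_n\cdots\sigma_1\sigma_1\cdots\sigma_n$ commutes with \emph{each} generator $\sigma_i$, $1\le i\le n-1$, as an element of $B_{n+1}$. Since $\beta$ is a positive braid on $n+1$ strands with no $\sigma_n$, it is a product $\sigma_{i_1}\cdots\sigma_{i_m}$ with every $i_j\in\{1,\dots,n-1\}$; commuting $W$ past $\beta$ one letter at a time then gives $\beta W=W\beta$, which is exactly the claimed identity. Everything therefore reduces to the single-generator relation $W\sigma_i=\sigma_i W$ for $1\le i\le n-1$, and I would establish this using only the Distant Generators and Neighboring Generators Rules, so that no conjugation or Markov move is ever invoked, as the statement requires.

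To carry this out I would factor $W=ED$ with $E:=\sigma_n\cdots\sigma_1$ and $D:=\sigma_1\cdots\sigma_n$, and prove two ``shift'' relations. The key step is the identity $D\sigma_i=\sigma_{i+1}D$ for $1\le i\le n-1$: starting from $\sigma_1\cdots\sigma_n\sigma_i$, slide $\sigma_i$ leftward past $\sigma_n,\dots,\sigma_{i+2}$ by the Distant Generators Rule to expose the block $\sigma_i\sigma_{i+1}\sigma_i$, rewrite it as $\sigma_{i+1}\sigma_i\sigma_{i+1}$ by the Neighboring Generators Rule, and finally slide the freed $\sigma_{i+1}$ back to the front past $\sigma_1,\dots,\sigma_{i-1}$, again by the Distant Generators Rule. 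The mirror relation $E\sigma_{i+1}=\sigma_i E$ (for the same range of $i$) then follows at once by applying the word-reversing anti-automorphism of $B_{n+1}$, which fixes each generator, reverses $D$ into $E$, and sends $D\sigma_i=\sigma_{i+1}D$ to $\sigma_i E=E\sigma_{i+1}$; alternatively it can be proved by the identical direct manipulation read from the opposite end.

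With both relations available, the conclusion is a short chain:
\[
W\sigma_i \;=\; ED\sigma_i \;=\; E\sigma_{i+1}D \;=\; \sigma_i ED \;=\; \sigma_i W,
\]
where the first substitution uses $D\sigma_i=\sigma_{i+1}D$ (legitimate since $i+1\le n$) and the second uses $E\sigma_{i+1}=\sigma_i E$ (legitimate since $2\le i+1\le n$). The main obstacle is really the shift relation $D\sigma_i=\sigma_{i+1}D$ together with careful bookkeeping of index ranges: one must check that the boundary cases $i=1$ and $i=n-1$ still supply the single neighboring triple needed for the braid relation, and that $i+1$ never exceeds $n$ (which is guaranteed precisely because $\beta$ omits $\sigma_n$). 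Once the index ranges are pinned down, the remaining work is purely formal substitution.
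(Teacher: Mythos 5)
Your proof is correct and takes essentially the same approach as the paper: the paper also reduces the lemma to the single-generator identity $W\sigma_i=\sigma_i W$ for $1\le i\le n-1$ and verifies it by exactly the moves you describe --- distant slides exposing $\sigma_i\sigma_{i+1}\sigma_i$ in the ascending half, one braid relation, then the mirror-image manipulation in the descending half --- so its six-line computation is precisely your two shift relations $D\sigma_i=\sigma_{i+1}D$ and $E\sigma_{i+1}=\sigma_i E$ carried out inline. Your factorization $W=ED$ and the use of the word-reversal anti-automorphism to get the second relation for free are a tidy modularization of the same argument rather than a different route.
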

\begin{proof}
Let $1\le i\le n-1$. We will show that $\sigma_n\cdots\sigma_1\sigma_1\cdots\sigma_n$ commutes with $\sigma_i$. Since the braid word of $\beta$ consists of a series of $\sigma_i$ of this form, this implies the lemma.
\begin{align*}
&\sigma_n\cdots\sigma_1\sigma_1\cdots\sigma_n\sigma_i\\
&=\sigma_n\cdots\sigma_1\sigma_1\cdots\sigma_i\sigma_{i+1}\sigma_i\cdots\sigma_n\text{ through repeating the Distant Generators Rule}\\
&=\sigma_n\cdots\sigma_1\sigma_1\cdots\sigma_{i+1}\sigma_i\sigma_{i+1}\cdots\sigma_n\text{ through the Neighboring Generators Rule}\\
&=\sigma_n\cdots\sigma_{i+1}\sigma_i\sigma_{i+1}\cdots\sigma_1\sigma_1\cdots\sigma_i\sigma_{i+1}\cdots\sigma_n\text{ through repeating the Distant Generators Rule}\\
&=\sigma_n\cdots\sigma_i\sigma_{i+1}\sigma_i\cdots\sigma_1\sigma_1\cdots\sigma_n\text{ through the Neighboring Generators Rule}\\
&=\sigma_i\sigma_n\cdots\sigma_{i+1}\sigma_i\cdots\sigma_1\sigma_1\cdots\sigma_n\text{ through repeating the Distant Generators Rule}\\
&=\sigma_i\sigma_n\cdots\sigma_1\sigma_1\cdots\sigma_n.
\end{align*}
Since the Distant Generators Rule and the Neighboring Generators Rule are braid isotopies, we have that $\sigma_i$ commutes with $\sigma_n\cdots\sigma_1\sigma_1\cdots\sigma_n$ only through braid isotopies.

\end{proof}

We can also think about the proof of Lemma \ref{iso1} pictorially. The word $\sigma_n\cdots\sigma_1\sigma_1\cdots\sigma_n$ represents the furthest strand to the right on a braid on $n+1$ strands being wrapped once around the other $n$ strands. Any positive braid on the other $n$ strands has no interaction with the rightmost strand, and so can pass through the rightmost strand. Figure \ref{slide} shows this process for $n=5$.

\begin{figure}[H]
\begin{center}
\includegraphics[scale=.7]{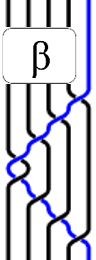}
\hspace{2em}
\begin{tikzpicture}
  \useasboundingbox (0,-2) rectangle (0,2);
  \node at (0,0) {$=$};
\end{tikzpicture}
\hspace{2em}
\includegraphics[scale=.7]{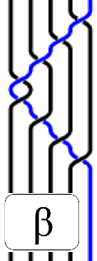}
\caption{Lemma \ref{iso1}}
\end{center}
\label{slide}
\end{figure}

The following standard definition will be of great use in proofs of our major results.

\begin{defn}\label{lemma1}
A full twist on $n$ strands, $\Delta_n^2$, is the braid $(\sigma_{n-1}\cdots\sigma_1)^n$. $\Delta_n^2$ is in the center of the braid group $B_n$, that is, $\Delta_n^2$ commutes with $\sigma_i$ for any $1\le i\le n-1$ \cite{StudyOfBraids}.
\end{defn}

\begin{lem}
\label{iso2}
If $n$ is a positive integer, then $\Delta_n^2 = \sigma_{n-1}\sigma_{n-2}\cdots\sigma_1\sigma_1\cdots\sigma_{n-2}\sigma_{n-1}(\Delta_{n-1}^2) \\= (\Delta_{n-1}^2)\sigma_{n-1}\sigma_{n-2}\cdots\sigma_1\sigma_1\cdots\sigma_{n-2}\sigma_{n-1}$ as braids (not allowing conjugation or Markov destabilization).
\end{lem}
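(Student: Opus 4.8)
The plan is to prove the claimed factorization of the full twist $\Delta_n^2$ by a direct decomposition of the defining word, combined with the commuting lemma just established. Recall from Definition \ref{lemma1} that $\Delta_n^2 = (\sigma_{n-1}\cdots\sigma_1)^n$, a product of $n$ descending blocks $D = \sigma_{n-1}\sigma_{n-2}\cdots\sigma_1$. The key observation is that the word $w = \sigma_{n-1}\cdots\sigma_1\sigma_1\cdots\sigma_{n-1}$ appearing in the statement is exactly the braid from Lemma \ref{iso1} (with the index shifted so that the rightmost strand here is strand $n$ wrapping around the first $n-1$ strands), and that $\Delta_{n-1}^2$ is a positive braid on the first $n-1$ strands containing no $\sigma_{n-1}$. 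Thus Lemma \ref{iso1}, applied with the parameter $n-1$ in place of $n$, immediately gives $w\,\Delta_{n-1}^2 = \Delta_{n-1}^2\, w$ as braids using only braid isotopies; this settles the second equality of the statement once the first is proved.

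First I would establish the first equality $\Delta_n^2 = w\,\Delta_{n-1}^2$. The natural route is to show that $\Delta_n^2$ factors as the braid $w$ (the rightmost strand making a full loop around the remaining strands) followed by a full twist on the first $n-1$ strands. I would argue geometrically, mirroring the pictorial description following Lemma \ref{iso1}: a full twist on $n$ strands can be visualized as the last strand being wound completely around the bundle of the other $n-1$ strands exactly once, together with a full twist performed internally on those $n-1$ strands. The winding of strand $n$ around the others contributes precisely the factor $w = \sigma_{n-1}\cdots\sigma_1\sigma_1\cdots\sigma_{n-1}$, and the internal twisting contributes $\Delta_{n-1}^2$. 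Alternatively, and more rigorously for a fully algebraic proof, I would manipulate the word $(\sigma_{n-1}\cdots\sigma_1)^n$ by repeatedly applying the Distant Generators and Neighboring Generators Rules to collect the contributions of the highest-index generator $\sigma_{n-1}$, peeling off the loop $w$ and leaving $(\sigma_{n-2}\cdots\sigma_1)^{n-1} = \Delta_{n-1}^2$ behind.

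I expect the main obstacle to be the algebraic bookkeeping in this collection step: showing that the $n$ occurrences of $\sigma_{n-1}$ distributed throughout $(\sigma_{n-1}\cdots\sigma_1)^n$ can be shuffled via braid isotopies into the single contiguous block $\sigma_{n-1}\cdots\sigma_1\sigma_1\cdots\sigma_{n-1}$ at the front, with all the lower-index generators reorganizing into a clean $\Delta_{n-1}^2$. This is the kind of index-chasing that is conceptually transparent but easy to mangle in a linear word. To keep it manageable, I would lean on the well-known identity $\Delta_n^2 = \Delta_n \cdot \Delta_n$ where $\Delta_n = \sigma_1(\sigma_2\sigma_1)\cdots(\sigma_{n-1}\cdots\sigma_1)$ is the half twist (or equivalently cite the standard fact that $\Delta_n^2$ admits the recursive form being claimed, as is recorded in references on the braid group such as \cite{StudyOfBraids}), and then verify the word-level equality using only the two braid-relation rules, which by \cite{Artin} preserve the braid and hence its closure. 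Once the first equality is in hand, the second follows at once from Lemma \ref{iso1} as noted above, so I would present the commuting step last as a short corollary of the factorization rather than reproving it from scratch.
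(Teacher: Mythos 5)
Your proposal is correct, but it reaches the first equality by a genuinely different route than the paper. The paper passes through braid \emph{closures}: it observes that $(\sigma_{n-1}\cdots\sigma_1)^{n-1}$ and $(\sigma_1\cdots\sigma_{n-2})\sigma_{n-1}(\sigma_1\cdots\sigma_{n-2})^{n-1}$ have isotopic closures (via $T(n,n-1)=T(n-1,n)$ and a Markov destabilization), invokes the Etnyre--Van Horn-Morris rigidity theorem for positive braids to conclude the two words are related by conjugation and braid isotopy, and then argues separately (with a figure, using the repetitive structure of the words) that the conjugation is in fact unnecessary. You instead propose to prove the identity $\Delta_n^2=(\sigma_{n-1}\cdots\sigma_1\sigma_1\cdots\sigma_{n-1})\Delta_{n-1}^2$ directly in the braid group $B_n$, either by citing the standard pure-braid/half-twist decomposition or by word-level induction using only the two braid relations; this is viable, and the bookkeeping you worry about is tamed by the classical shift identity $\sigma_i(\sigma_{n-1}\cdots\sigma_1)=(\sigma_{n-1}\cdots\sigma_1)\sigma_{i+1}$ for $i\le n-2$, which makes the peeling-off induction short (for $n=3$: $\sigma_2\sigma_1\sigma_2\sigma_1\sigma_2\sigma_1=\sigma_2\sigma_1\sigma_1\sigma_2\sigma_1\sigma_1$ by one application of the Neighboring Generators Rule). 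Your route arguably buys something the paper's does not deliver as cleanly: it establishes equality \emph{as braids} on the nose, which is exactly the strength the lemma asserts and which the paper needs later when applying Lemma \ref{iso2} to subwords of larger braid words, whereas the paper's E-vHM argument only gives equivalence up to conjugation and must discharge the conjugation by a pictorial appeal. The paper's approach buys brevity and re-uses knot-theoretic input already in play. Your handling of the second equality---commuting $\Delta_{n-1}^2$ past $\sigma_{n-1}\cdots\sigma_1\sigma_1\cdots\sigma_{n-1}$ by Lemma \ref{iso1} with parameter $n-1$, since $\Delta_{n-1}^2$ contains no $\sigma_{n-1}$---is exactly the paper's step, and is applied correctly.
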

\begin{proof}
By definition, $\Delta_n^2= (\sigma_{n-1}\cdots\sigma_1)(\sigma_{n-1}\cdots\sigma_1)^{n-1}$. Because $T(n,n-1)=T(n-1,n)$ \cite{KnotBook} and $(\sigma_1\cdots\sigma_{n-2})^n$ is a valid braid word for $T(n-1,n)$ as explained in Section 2, the closure of $(\sigma_{n-1}\cdots\sigma_1)^{n-1}$ is isotopic to that of $(\sigma_1\cdots\sigma_{n-2})^n$. Since $(\sigma_1\cdots\sigma_{n-2})\sigma_{n-1}(\sigma_1\cdots\sigma_{n-2})^{n-1}$ can be made into $(\sigma_1\cdots\sigma_{n-2})^n$ by the Markov Destabilization Rule, these two braids have isotopic closures, and thus the closure of $(\sigma_1\cdots\sigma_{n-2})\sigma_{n-1}(\sigma_1\cdots\sigma_{n-2})^{n-1}$ is also isotopic that of $(\sigma_{n-1}\cdots\sigma_1)^{n-1}$. Etnyre and Van Horn-Morris proved that any positive braids whose closure represent the same link are related by positive Markov moves, braid isotopy, and conjugation \cite{E-vHM}. Then as $(\sigma_{n-1}\cdots\sigma_1)^{n-1}$ and $(\sigma_1\cdots\sigma_{n-2})\sigma_{n-1}(\sigma_1\cdots\sigma_{n-2})^{n-1}$ are positive braids on the same number of strands with isotopic braid closures, these two braids must be related by the Conjugation Rule and isotopy. In fact, the repetitive nature of the braid words means that the Conjugation Rule is not necessary, as can be seen in Figure \ref{unfurl}. Since this relation requires only braid isotopies, it can be done on any section of the overall braid word. This implies $\Delta_n^2 = (\sigma_{n-1}\cdots\sigma_1)^n= (\sigma_{n-1}\cdots\sigma_1)(\sigma_{n-1}\cdots\sigma_1)^{n-1}=(\sigma_{n-1}\cdots\sigma_1)(\sigma_1\cdots\sigma_{n-2})\sigma_{n-1}(\sigma_1\cdots\sigma_{n-2})^{n-1}=\sigma_{n-1}\sigma_{n-2}\cdots\sigma_1\sigma_1\cdots\sigma_{n-2}\sigma_{n-1}(\Delta_{n-1}^2)$. Lemma \ref{iso1} tells us that these two sections commute.
\end{proof}

Lemma \ref{iso2} can be thought of pictorially as pulling the strand that starts furthest to the right as far up the braid as possible. Figure \ref{unfurl} shows this process for $n=4$.

\begin{figure}[H]
\begin{center}
\begin{tikzpicture}
  \braid[width=10pt, height=10pt, line width=2pt, style strands={4}{blue}] s_3^{-1} s_2^{-1} s_1^{-1} s_3^{-1} s_2^{-1} s_1^{-1} s_3^{-1} s_2^{-1} s_1^{-1} s_3^{-1} s_2^{-1} s_1^{-1};
\end{tikzpicture}
\hspace{2em}
\begin{tikzpicture}
  \useasboundingbox (0,0) rectangle (0,4);
  \node at (0,2) {$=$};
\end{tikzpicture}
\hspace{2em}
\begin{tikzpicture}
  \braid[width=10pt, height=10pt, line width=2pt, style strands={4}{blue}] s_3^{-1} s_2^{-1} s_1^{-1} s_1^{-1} s_2^{-1} s_3^{-1} s_2^{-1} s_1^{-1} s_2^{-1} s_1^{-1} s_2^{-1} s_1^{-1};
\end{tikzpicture}
\hspace{2em}
\begin{tikzpicture}
  \useasboundingbox (0,0) rectangle (0,4);
  \node at (0,2) {$=$};
\end{tikzpicture}
\hspace{2em}
\begin{tikzpicture}
  \braid[width=10pt, height=10pt, line width=2pt, style strands={4}{blue}] s_2^{-1} s_1^{-1} s_2^{-1} s_1^{-1} s_2^{-1} s_1^{-1} s_3^{-1} s_2^{-1} s_1^{-1} s_1^{-1} s_2^{-1} s_3^{-1};
\end{tikzpicture}
\end{center}
\caption{Lemma \ref{iso2}}
\label{unfurl}
\end{figure}

Bear in mind that Lemma \ref{iso2} can be repeated multiple times to obtain 
\begin{align*}
\Delta_n^2&=(\sigma_{n-1}\cdots\sigma_1\sigma_1\cdots\sigma_{n-1})(\Delta_{n-1}^2)\\&=(\sigma_{n-1}\cdots\sigma_1\sigma_1\cdots\sigma_{n-1})(\sigma_{n-2}\cdots\sigma_1\sigma_1\cdots\sigma_{n-2})(\Delta_{n-2}^2)\\&=(\sigma_{n-1}\cdots\sigma_1\sigma_1\cdots\sigma_{n-1})(\sigma_{n-2}\cdots\sigma_1\sigma_1\cdots\sigma_{n-2})\cdots(\sigma_2\sigma_1\sigma_1\sigma_2)(\sigma_1\sigma_1).
\end{align*} By Lemma \ref{iso1}, each of the groups in parentheses above can all commute with one another. Furthermore, these lemmas can be done on any subsequence of our braid word as they only use braid isotopy and follow the five rules on our overall braid word.

With these relations in mind, we now construct Gordian adjacencies between specific classes of torus knots. In each step of the following proofs, we do exactly one of the following:

\begin{itemize}
    \item rewrite the braid word in a different form for better clarity,

    \item commute sections of our braid word using some combination of Lemma \ref{iso1} and the Distant Generators Rule,

    \item use one of Lemma \ref{iso2}, Markov Destabilization Rule, or Neighboring Generators Rule on some section or sections of our braid word,

    \item make crossing changes via the Crossing Change Rule.
\end{itemize}

\begin{lem}
\label{iso3}
If $n$ and $k$ are positive integers with $n\ge 2$, then \\$(\sigma_{n-1}\cdots\sigma_1)^{nk+1}=(\sigma_{n-1}\cdots\sigma_1\sigma_1\cdots\sigma_{n-1})^k\sigma_{n-1}\cdots(\sigma_2\sigma_1\sigma_1\sigma_2)^k\sigma_2(\sigma_1\sigma_1)^k\sigma_1$ using braid isotopies and the Conjugation Rule.
\end{lem}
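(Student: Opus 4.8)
The plan is to read the displayed equality at the level of braid closures, consistent with the paper's convention that $=$ denotes isotopic closures and that the Conjugation Rule is permitted, and to carry the left-hand side $L_n:=(\sigma_{n-1}\cdots\sigma_1)^{nk+1}$ to the right-hand side $R_n:=A_{n-1}^k\sigma_{n-1}A_{n-2}^k\sigma_{n-2}\cdots A_1^k\sigma_1$ by induction on $n$, where I abbreviate $A_j:=\sigma_j\cdots\sigma_1\sigma_1\cdots\sigma_j$ and $\delta_m:=\sigma_{m-1}\cdots\sigma_1$. First I would record the structural input. Since $(\sigma_{n-1}\cdots\sigma_1)^n=\Delta_n^2$, we have $L_n=(\Delta_n^2)^k\delta_n$, and the iterated form of Lemma \ref{iso2} displayed above, together with the commuting of blocks supplied by Lemma \ref{iso1}, gives $(\Delta_n^2)^k=A_{n-1}^k(\Delta_{n-1}^2)^k$ as braids, with $A_{n-1}$ commuting with every braid supported on the first $n-1$ strands.

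The heart of the argument is one clean reduction applied to \emph{both} sides. Writing $\delta_n=\sigma_{n-1}\delta_{n-1}$, I would first get $L_n=A_{n-1}^k(\Delta_{n-1}^2)^k\sigma_{n-1}\delta_{n-1}$; then, conjugating the suffix $\delta_{n-1}$ to the front (Conjugation Rule) and commuting it back past $A_{n-1}^k$ (Lemma \ref{iso1}) and past the central factor $(\Delta_{n-1}^2)^k$ (Definition \ref{lemma1}), I reach $\widehat{L_n}=\widehat{A_{n-1}^k\,L_{n-1}\,\sigma_{n-1}}$, where $L_{n-1}=(\Delta_{n-1}^2)^k\delta_{n-1}=(\sigma_{n-2}\cdots\sigma_1)^{(n-1)k+1}$. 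An entirely analogous conjugation applied to $R_n=A_{n-1}^k\sigma_{n-1}R_{n-1}$, namely moving the suffix $R_{n-1}$ to the front and commuting it past $A_{n-1}^k$ (using that $R_{n-1}$ is supported on the first $n-1$ strands), yields $\widehat{R_n}=\widehat{A_{n-1}^k\,R_{n-1}\,\sigma_{n-1}}$. It therefore suffices to pass from $A_{n-1}^k L_{n-1}\sigma_{n-1}$ to $A_{n-1}^k R_{n-1}\sigma_{n-1}$, i.e. to replace the sandwiched subword $L_{n-1}$ by $R_{n-1}$ using the inductive hypothesis.

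The main obstacle is precisely this replacement: the inductive hypothesis only asserts $\widehat{L_{n-1}}=\widehat{R_{n-1}}$ as $(n-1)$-strand closures, and conjugation is a closure-level move that does not obviously localize to a subword wedged between $A_{n-1}^k$ and $\sigma_{n-1}$ — indeed $\sigma_{n-1}$ commutes with neither $A_{n-2}$ nor $\Delta_{n-1}^2$, so it cannot simply be slid into position by the Distant Generators Rule. To resolve this I would strengthen the inductive statement to: $L_n$ can be carried to $R_n$ by braid isotopies and conjugations in which every conjugating word is supported on generators of index at most $n-2$. This invariant is manifestly preserved, since at level $n$ the only new conjugations are by $\delta_{n-1}$ and by $R_{n-1}$, both of index $\le n-2$; and it is exactly what makes the replacement legal. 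Each braid isotopy in the hypothesis rewrites only the subword $L_{n-1}$ and is valid verbatim in $B_n$, while each conjugation there is by a word $w$ of index $\le n-3$, which commutes with $A_{n-1}^k$ (Lemma \ref{iso1}) and with $\sigma_{n-1}$ (Distant Generators Rule, as $|(n-1)-i|\ge 2$ for $i\le n-3$); pushing $w$ past these two outer factors turns an inner cyclic move $Qw\mapsto wQ$ into a genuine full-word conjugation. The base case $n=2$ is the literal identity $\sigma_1^{2k+1}=A_1^k\sigma_1$.

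The remaining work is bookkeeping: verifying the iterated Lemma \ref{iso2} decomposition and the commuting relations explicitly, and checking that the index-$\le n-2$ invariant is inherited through each lift. I expect the genuinely delicate point to be this lifting step for conjugations — formulating precisely how an inner cyclic permutation $Qw\mapsto wQ$ of the sandwiched subword becomes a full-word conjugation once $w$ has been commuted past the outer $A_{n-1}^k$ and $\sigma_{n-1}$ — because this is exactly where the hypothesis that the conjugating words carry only low-index generators is indispensable.
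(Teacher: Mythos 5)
Your proposal is correct and follows essentially the same route as the paper: induction on $n$ with base case $\sigma_1^{2k+1}=(\sigma_1\sigma_1)^k\sigma_1$, the decomposition $(\Delta_n^2)^k=(\sigma_{n-1}\cdots\sigma_1\sigma_1\cdots\sigma_{n-1})^k(\Delta_{n-1}^2)^k$ via Lemmas \ref{iso2} and \ref{iso1}, a conjugation that reassembles $(\Delta_{n-1}^2)^k\sigma_{n-2}\cdots\sigma_1=(\sigma_{n-2}\cdots\sigma_1)^{(n-1)k+1}$, and then the inductive hypothesis applied to that subword. Your one addition is the strengthened invariant that every conjugating word has index at most $n-2$, together with the lifting argument (such words commute with $(\sigma_{n-1}\cdots\sigma_1\sigma_1\cdots\sigma_{n-1})^k$ by Lemma \ref{iso1} and with $\sigma_{n-1}$ by the Distant Generators Rule); the paper invokes the inductive hypothesis on a subword without comment, and your invariant is precisely the justification that makes that step fully rigorous.
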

\begin{proof}
We proceed by induction on $n$. When $n=2$, $(\sigma_1)^{2k+1}=(\sigma_1\sigma_1)^k\sigma_1$. Now assume that this theorem holds true when $n=a$ for some $a\ge 2$. When $n=a+1$, we find
\begin{align*}
&(\sigma_a\cdots\sigma_1)^{(a+1)k+1}
\\&=(\Delta_{a+1}^2)^k\sigma_a\cdots\sigma_1 \\&=(\Delta_a^2\sigma_a\cdots\sigma_1\sigma_1\cdots\sigma_a)^k\sigma_a\cdots\sigma_1\text{ by Lemma \ref{iso2}} \\&=(\Delta_a^2)^k(\sigma_a\cdots\sigma_1\sigma_1\cdots\sigma_a)^k\sigma_a\cdots\sigma_1\text{ by Lemma \ref{iso1}} \\&=(\sigma_a\cdots\sigma_1\sigma_1\cdots\sigma_a)^k\sigma_a\cdots\sigma_1(\Delta_a^2)^k\text{ by the Conjugation Rule} \\&=(\sigma_a\cdots\sigma_1\sigma_1\cdots\sigma_a)^k\sigma_a(\Delta_a^2)^k\sigma_{a-1}\cdots\sigma_1\text{ as $\Delta_a^2$ commutes with $\sigma_i$ for all $i\le a-1$}
\\&=(\sigma_a\cdots\sigma_1\sigma_1\cdots\sigma_a)^k\sigma_a(\sigma_{a-1}\cdots\sigma_1)^{ak+1} \\&=(\sigma_a\cdots\sigma_1\sigma_1\cdots\sigma_a)^k\sigma_a(\sigma_{a-1}\cdots\sigma_1\sigma_1\cdots\sigma_{a-1})^k\sigma_{a-1}\cdots(\sigma_2\sigma_1\sigma_1\sigma_2)^k\sigma_2(\sigma_1\sigma_1)^k\sigma_1.
\end{align*}
Hence we have that the last step follows from our inductive hypothesis.
\end{proof}

Note that the conversion in the preceding lemma is bidirectional, as all braid isotopies and the Conjugation Rule are reversible. Later, we will use the reverse direction of Lemma \ref{iso3}. Since Lemmas \ref{iso1}, \ref{iso2}, and \ref{iso3} are combinations of our five rules, they follow Theorem \ref{rules}, implying each of these processes results in Gordian adjacencies.

We now focus on constructing Gordian adjacencies between torus knots whose indices differ by one. When going from $T(a,b)$ to $T(a-1,c)$ using the five rules, the first question that arises is how many crossing changes are required to eliminate all $\sigma_{a-1}$ from our braid word. The theorem below shows that this process can be done in $\left\lfloor\frac{b}{a}\right\rfloor$ steps.

\begin{prop}
\label{sign}
  For all positive integers $a$ and $b$ such that $a$ is coprime to $b$, there exists $\beta$ in $B_{a-1}$ such that $\hat\beta \leq_g T(a,b)$ and $d_g(T(a,b),\hat\beta) = \left\lfloor\frac{b}{a}\right\rfloor$.
\end{prop}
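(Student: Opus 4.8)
The plan is to produce an explicit reduction of the torus braid word $(\sigma_{a-1}\cdots\sigma_1)^{b}$ for $T(a,b)$ down to a positive braid word on $a-1$ strands using the five rules, arranged so that the Crossing Change Rule is invoked exactly $\lfloor b/a\rfloor$ times. Once this is done the conclusion is automatic: by Theorem \ref{rules} the closure $\hat\beta$ of the resulting word $\beta\in B_{a-1}$ satisfies $\hat\beta\gord T(a,b)$, and by the accounting in Lemma \ref{one} each Crossing Change Rule lowers the unknotting number by exactly one while the other four rules preserve it, so that $d_g(T(a,b),\hat\beta)=u(T(a,b))-u(\hat\beta)$ equals the number of crossing changes used, namely $\lfloor b/a\rfloor$. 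Thus the entire content is to eliminate every $\sigma_{a-1}$ from the braid word at a cost of exactly $\lfloor b/a\rfloor$ crossing changes.

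First I would set up the arithmetic. Writing $b=aq+r$ with $q=\lfloor b/a\rfloor$, coprimality of $a$ and $b$ forces $1\le r\le a-1$ (the case $a=1$ is vacuous, and when $q=0$ one simply destabilizes the block below with no crossing changes), and since $(\sigma_{a-1}\cdots\sigma_1)^{a}=\Delta_a^2$ I factor $(\sigma_{a-1}\cdots\sigma_1)^{b}=(\Delta_a^2)^q(\sigma_{a-1}\cdots\sigma_1)^{r}$. Next I would unfurl the $q$ full twists: by the iterated form of Lemma \ref{iso2} together with the commuting relations of Lemma \ref{iso1}, each $\Delta_a^2$ peels off one top-strand wrap $W:=\sigma_{a-1}\cdots\sigma_1\sigma_1\cdots\sigma_{a-1}$ past a copy of $\Delta_{a-1}^2$, so that $(\Delta_a^2)^q=W^{q}(\Delta_{a-1}^2)^q$ and all of the $\sigma_{a-1}$'s coming from the twists now lie inside the block $W^{q}$, two per wrap. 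This uses only braid isotopies, so no crossing changes are spent and, by the locality remarks following Lemmas \ref{AA} and \ref{iso2}, it is legal on the whole word.

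The heart of the argument is the crossing-change bookkeeping. In $W^{q}=(\sigma_{a-1}\,V\,\sigma_{a-1})^{q}$, where $V=\sigma_{a-2}\cdots\sigma_1\sigma_1\cdots\sigma_{a-2}$, consecutive wraps meet in an adjacent pair $\sigma_{a-1}\sigma_{a-1}$; deleting these $q-1$ junction pairs by the Crossing Change Rule collapses $W^{q}$ and removes $2(q-1)$ of the $\sigma_{a-1}$'s, and one further crossing change pairs the last surviving wrap-$\sigma_{a-1}$ against the residual block, for a running total of exactly $q$ crossing changes removing $2q$ generators and leaving exactly $r$ copies of $\sigma_{a-1}$. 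One then aims to arrange these $r$ survivors (after commuting the central $(\Delta_{a-1}^2)^q$ out of the way via Lemma \ref{iso1} and cyclically reorganizing by the Conjugation Rule) as a clean torus block of shape $(\sigma_{a-1}\cdots\sigma_1)^{r}$. Since $r<a$, such a block is the torus braid $T(a,r)=T(r,a)$ of braid index $r<a$, which reduces onto $a-1$ strands using only the Neighboring Generators Rule (each application $\sigma_{a-1}\sigma_{a-2}\sigma_{a-1}\to\sigma_{a-2}\sigma_{a-1}\sigma_{a-2}$ lowering the $\sigma_{a-1}$-count by one) followed by a single Markov Destabilization, spending no crossing changes. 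The output is the desired $\beta\in B_{a-1}$.

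The step I expect to be the main obstacle is precisely this last stage: guaranteeing that the $r$ surviving $\sigma_{a-1}$'s can always be disentangled from the accumulated lower-strand twisting $(\Delta_{a-1}^2)^q$ and collected into a genuine torus block that destabilizes with no additional crossing changes. The danger is that a careless placement of the $q$ crossing changes leaves a knot that is Gordian adjacent to $T(a,b)$ but still of braid index $a$ — for instance reaching $T(a,r')$ rather than a knot admitting a representative in $B_{a-1}$ — which would give $\hat\beta\gord T(a,b)$ without $\beta\in B_{a-1}$. Controlling the interaction of the surviving top-strand crossings with $(\Delta_{a-1}^2)^q$, so that exactly $q$ crossing changes both suffice and leave a destabilizable residue, is the delicate point; I would handle it by committing throughout to the normal form supplied by Lemma \ref{iso3} (whose reverse direction is advertised for later use), keeping the $\sigma_{a-1}$'s corralled into a single block rather than letting them scatter into nonadjacent occurrences that can be neither cancelled nor merged.
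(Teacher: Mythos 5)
Your skeleton matches the paper's: write $b=aq+r$ with $q=\lfloor b/a\rfloor$, factor $(\sigma_{a-1}\cdots\sigma_1)^b=(\Delta_a^2)^q(\sigma_{a-1}\cdots\sigma_1)^r$, unfurl $(\Delta_a^2)^q=(\sigma_{a-1}\cdots\sigma_1\sigma_1\cdots\sigma_{a-1})^q(\Delta_{a-1}^2)^q$ via Lemmas \ref{iso2} and \ref{iso1}, spend exactly $q$ crossing changes on the $\sigma_{a-1}$'s, destabilize the last one, and let Lemma \ref{one} and Theorem \ref{rules} convert the crossing-change count into the exact Gordian distance. But the stage you yourself flag as ``the delicate point'' is a genuine gap, not a technicality. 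In your order of operations, after all $q$ crossing changes you are left (up to conjugation and the centrality of $\Delta_{a-1}^2$ in $B_{a-1}$) with $(\Delta_{a-1}^2)^q\,\sigma_{a-1}V^q(\sigma_{a-2}\cdots\sigma_1)(\sigma_{a-1}\cdots\sigma_1)^{r-1}$, where $V=\sigma_{a-2}\cdots\sigma_1\sigma_1\cdots\sigma_{a-2}$: the front survivor $\sigma_{a-1}$ is separated from the other $r-1$ by $V^q$, and $V^q$ does not commute with words containing $\sigma_{a-2}$ (Lemma \ref{iso1} lets the full wrap $\sigma_{a-1}\cdots\sigma_1\sigma_1\cdots\sigma_{a-1}$ pass lower-index letters; it does not let $V$ pass $\sigma_{a-2}$). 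So ``arranging the $r$ survivors into a clean torus block $(\sigma_{a-1}\cdots\sigma_1)^r$'' is exactly what you have not shown, and your proposed repair via the normal form of Lemma \ref{iso3} cannot work in general: that lemma only treats exponents congruent to $1$ modulo $n$, while $r$ here is an arbitrary residue coprime to $a$. Your further claim that a clean torus block with $r<a$ collapses to a single $\sigma_{a-1}$ ``using only the Neighboring Generators Rule,'' each use lowering the count by one, is also only asserted; after one merge the word between the remaining $\sigma_{a-1}$'s contains two copies of $\sigma_{a-2}$, so the induction required is multi-level and you never set it up.

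The paper closes this hole by reversing the order of operations and importing one nontrivial input. Before spending any crossing change, it reduces the residue: since $T(a,r)=T(r,a)$ and $r<a$, the theorem of Etnyre and Van Horn-Morris \cite{E-vHM} (already invoked in the proof of Lemma \ref{iso2}) provides a path of braid isotopies, conjugations, and Markov destabilizations from $(\sigma_{a-1}\cdots\sigma_1)^r$ to the $r$-strand representative, and pausing that path just before its first destabilization leaves the residue with exactly one $\sigma_{a-1}$. Only then are the $q$ wraps commuted next to that lone letter by Lemma \ref{iso1}, producing the single block $(\sigma_{a-1}\cdots\sigma_1\sigma_1\cdots\sigma_{a-1})^q\sigma_{a-1}$ in which the $q$ pairs to be cancelled are literally adjacent; $q$ applications of the Crossing Change Rule and one Markov destabilization then finish, with the distance count identical to yours. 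If you adopt that ordering --- or independently prove that $(\sigma_{a-1}\cdots\sigma_1)^r$ with $r<a$ is braid-isotopic to a positive word containing a single $\sigma_{a-1}$ --- your argument closes; as written, the disentanglement step is missing and nothing you cite supplies it.
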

\begin{proof}
$T(a,b)$ can be represented by the braid word $(\Delta_a^2)^{\left\lfloor\frac{b}{a}\right\rfloor}(\sigma_{a-1}\cdots\sigma_1)^{(b\text{ (mod }a))}$. Taking the second section of this word, as $T(a,b\text{ (mod }a))=T(b\text{ (mod }a),a)$ and $b\text{ (mod }a) < a$, the braid word of $T(a,b\text{ (mod }a))$ can be changed into the braid word of $T(b\text{ (mod }a),a)$ using only braid isotopies and Markov destabilizations \cite{E-vHM}. Note that we can still use the Conjugation Rule because $\Delta_a^2$ commutes with all letters in our braid word. So there is a path using our rules from $T(a,b\text{ (mod }a))$ to $T(b\text{ (mod }a),a)$. On this path, we can pause before the Markov Destabilization Rule is used for the first time. At this point, the subword has only one copy of $\sigma_{a-1}$. Moreover, we have only used our rules on our overall braid word as we have only used braid isotopies.

By Lemma \ref{iso2},
\begin{align*}
(\Delta_a^2)^{\left\lfloor\frac{b}{a}\right\rfloor}&=(\sigma_{a-1}\cdots\sigma_1\sigma_1\cdots\sigma_{a-1}\Delta_{a-1}^2)^{\left\lfloor\frac{b}{a}\right\rfloor}\\&=(\sigma_{a-1}\cdots\sigma_1\sigma_1\cdots\sigma_{a-1})^{\left\lfloor\frac{b}{a}\right\rfloor}(\Delta_{a-1}^2)^{\left\lfloor\frac{b}{a}\right\rfloor}.
\end{align*}
We can then commute $(\sigma_{a-1}\cdots\sigma_1\sigma_1\cdots\sigma_{a-1})^{\left\lfloor\frac{b}{a}\right\rfloor}$ so that it is next to the final $\sigma_{a-1}$ made by the process above. We know this is possible because for every other $\sigma_i$ in that section of the braid word, $i<a-1$, so $\sigma_i$ commutes with $(\sigma_{a-1}\cdots\sigma_1\sigma_1\cdots\sigma_{a-1})$ by Lemma \ref{iso1}. This results in the section of a braid word $(\sigma_{a-1}\cdots\sigma_1\sigma_1\cdots\sigma_{a-1})^{\left\lfloor\frac{b}{a}\right\rfloor}\sigma_{a-1}$ which is the only section of the braid word that contains $\sigma_{a-1}$. We use the Crossing Change Rule to delete all consecutive pairs of $\sigma_{a-1}$. Since there are $\left\lfloor\frac{b}{a}\right\rfloor$ of these pairs, we need to use $\left\lfloor\frac{b}{a}\right\rfloor$ crossing changes. This leaves only one $\sigma_{a-1}$, which we can delete using the Markov Destabilization Rule. We are left with $\beta$. Gordian adjacency follows as we have only used the five rules.
\end{proof}

For the following proofs of our theorems and lemmas, we abbreviate ``crossing changes" as ``$CCs$" for brevity. 
\begin{thm}
\label{ci} 
If $n$ and $k$ are positive integers, then  $T(n,n^2k+1) \leq_g T(n+1,(n^2-1)k+1)$.
\end{thm}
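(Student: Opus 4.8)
The plan is to invoke Theorem \ref{rules}: it suffices to exhibit a sequence of the five rules carrying a braid word for $T(n+1,(n^2-1)k+1)$ to one for $T(n,n^2k+1)$, after which Gordian adjacency is automatic and no crossing-change count need be checked. Write $W_j=\sigma_j\cdots\sigma_1\sigma_1\cdots\sigma_j$, so that each $W_j$ commutes with every positive word on strands $1,\dots,j$ by Lemma \ref{iso1}. First I would put both torus knots into the normal form of Lemma \ref{iso3}. Since $(n^2-1)k+1=(n+1)(n-1)k+1$, applying Lemma \ref{iso3} on $n+1$ strands with exponent $(n-1)k$ writes $T(n+1,(n^2-1)k+1)=W_n^{(n-1)k}\sigma_n\,W_{n-1}^{(n-1)k}\sigma_{n-1}\cdots W_1^{(n-1)k}\sigma_1$, while Lemma \ref{iso3} on $n$ strands with exponent $nk$ gives $T(n,n^2k+1)=W_{n-1}^{nk}\sigma_{n-1}\,W_{n-2}^{nk}\sigma_{n-2}\cdots W_1^{nk}\sigma_1$. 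The point of this form is that the separators $\sigma_j$ will line up with the moves below.

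Next I would peel off the $(n+1)$-st strand. Every $\sigma_n$ lives in the leading block $W_n^{(n-1)k}\sigma_n$; using $W_n=\sigma_n W_{n-1}\sigma_n$ this block equals $\sigma_n W_{n-1}(\sigma_n\sigma_n W_{n-1})^{(n-1)k-1}\sigma_n\sigma_n$, so $(n-1)k$ applications of the Crossing Change Rule delete the interior $\sigma_n\sigma_n$ pairs, leaving one $\sigma_n$ removable by the Markov Destabilization Rule. This is exactly the strand-reduction underlying Proposition \ref{sign}, and it turns the leading block into $W_{n-1}^{(n-1)k}$, which concatenates with the next block to give, on $n$ strands, $W_{n-1}^{2(n-1)k}\sigma_{n-1}\,W_{n-2}^{(n-1)k}\sigma_{n-2}\cdots W_1^{(n-1)k}\sigma_1$ at a cost of $(n-1)k$ crossing changes.

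The core step is a downward cascade redistributing the excess $W_{n-1}$'s into the lower blocks. The key local move is $W_j^{e}\sigma_j\to\sigma_j W_{j-1}^{e}$: writing $W_j^{e}\sigma_j=\sigma_j W_{j-1}(\sigma_j\sigma_j W_{j-1})^{e-1}\sigma_j\sigma_j$ and cancelling the $e$ interior $\sigma_j\sigma_j$ pairs leaves the separator $\sigma_j$ in front and reincarnates the shed copies as $W_{j-1}$'s, at a cost of $e$ crossing changes. Processing $j=n-1$ down to $j=2$ and shedding only the excess over the target exponent $nk$, I would track the counts: the top block $W_{n-1}$ carries $2(n-1)k$ and sheds $e_{n-1}=(n-2)k$, while for $j\le n-2$ block $j$ carries $(n-1)k+e_{j+1}$, sheds $e_j=e_{j+1}-k$, and is left with exactly $nk$. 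Since $e_j=(j-1)k$ decreases to $e_1=0$, the cascade terminates with every block at exponent $nk$ and the separators $\sigma_{n-1},\dots,\sigma_1$ in place — precisely the Lemma \ref{iso3} normal form of $T(n,n^2k+1)$, which the reverse direction of Lemma \ref{iso3} returns to the torus braid. The cascade uses $\sum_{j=2}^{n-1}(j-1)k=\tfrac{(n-1)(n-2)}{2}k$ crossing changes, for a total of $(n-1)k+\tfrac{(n-1)(n-2)}{2}k=\tfrac{n(n-1)}{2}k=u(T(n+1,(n^2-1)k+1))-u(T(n,n^2k+1))$, consistent with Theorem \ref{rules}.

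The main obstacle is the cascade bookkeeping. One must check that the separator $\sigma_j$ produced by each local move is exactly the $\sigma_j$ demanded by the Lemma \ref{iso3} form of the target, and that the excess counts $e_j=(j-1)k$ telescope cleanly to $e_1=0$ so that no $\sigma_j\sigma_j$ pair is ever left stranded between blocks — it is this alignment that lets the whole argument proceed by the Crossing Change, Markov, and braid-isotopy rules without invoking the Conjugation Rule to cancel an isolated leftover pair. Once the local move $W_j^{e}\sigma_j\to\sigma_j W_{j-1}^{e}$ and the index arithmetic are pinned down, the remainder is routine.
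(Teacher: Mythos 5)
Your proposal is correct and takes essentially the same approach as the paper: the same elimination of $\sigma_n$ via $(n-1)k$ crossing changes plus one Markov destabilization, the same downward cascade shedding $(j-1)k$ pairs of $\sigma_j\sigma_j$ at level $j$, and the same reassembly via the reverse direction of Lemma \ref{iso3}. The only difference is bookkeeping — you unfurl all full twists into the Lemma \ref{iso3} normal form at the outset (so each block is kept at exponent $nk$), whereas the paper retains $(\Delta_n^2)^{(n-1)k}$ as an inert commuting prefix and keeps exponent $k$ per block — and your closing remark about avoiding the Conjugation Rule is slightly off only because Lemma \ref{iso3} itself uses it, harmlessly, on the full word.
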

\begin{proof}

We first remove all $\sigma_n$ from our braid word using the process described in Proposition \ref{sign}. 
\begin{align*}
&T(n+1,(n^2-1)k+1) \\
&= (\sigma_n\cdots\sigma_1)^{(n+1)(n-1)k+1} \\
&= (\Delta_{n+1}^2)^{(n-1)k}\sigma_n\cdots\sigma_1 \\
&= (\sigma_n\cdots\sigma_1\sigma_1\cdots\sigma_n\Delta_n^2)^{(n-1)k}\sigma_n\cdots\sigma_1\text{ by Lemma \ref{iso2}}\\
&= (\Delta_n^2)^{(n-1)k}(\sigma_n\cdots\sigma_1\sigma_1\cdots\sigma_n)^{(n-1)k}\sigma_n\cdots\sigma_1 \\
&\rightarrow(\Delta_n^2)^{(n-1)k}\sigma_n(\sigma_{n-1}\cdots\sigma_1\sigma_1\cdots\sigma_{n-1})^{(n-1)k}\sigma_{n-1}\cdots\sigma_1\text{(applying $(n-1)k$ $CCs$)}\\
&= (\Delta_n^2)^{(n-1)k}((\sigma_{n-1}\cdots\sigma_1\sigma_1\cdots\sigma_{n-1})^k)^{n-1}\sigma_{n-1}\cdots\sigma_1.
\end{align*}
Next, we remove $\sigma_{n-1}$ from all but one of the $n-1$ copies of $(\sigma_{n-1}\cdots\sigma_1\sigma_1\cdots\sigma_{n-1})^k$, then remove $\sigma_{n-2}$ from all of the copies from which $\sigma_{n-1}$ was removed except one, and so on:
\begin{align*}
&(\Delta_n^2)^{(n-1)k}((\sigma_{n-1}\cdots\sigma_1\sigma_1\cdots\sigma_{n-1})^k)^{n-1}\sigma_{n-1}\cdots\sigma_1\\&\rightarrow(\Delta_n^2)^{(n-1)k}(\sigma_{n-1}\cdots\sigma_1\sigma_1\cdots\sigma_{n-1})^k\sigma_{n-1}((\sigma_{n-2}\cdots\sigma_1\sigma_1\cdots\sigma_{n-2})^k)^{n-2}\sigma_{n-2}\cdots\sigma_1\text{ (applying $(n-2)k$ $CCs$)} \\&\hspace{7cm}\vdots \\&\rightarrow (\Delta_n^2)^{(n-1)k}(\sigma_{n-1}\cdots\sigma_1\sigma_1\cdots\sigma_{n-1})^k\sigma_{n-1}\cdots(\sigma_3\sigma_2\sigma_1\sigma_1\sigma_2\sigma_3)^{3k}\sigma_3\sigma_2\sigma_1\text{ (applying $3k$ $CCs$)} \\&\rightarrow (\Delta_n^2)^{(n-1)k}(\sigma_{n-1}\cdots\sigma_1\sigma_1\cdots\sigma_{n-1})^k\sigma_{n-1}\cdots(\sigma_3\sigma_2\sigma_1\sigma_1\sigma_2\sigma_3)^{k}\sigma_3(\sigma_2\sigma_1\sigma_1\sigma_2)^{2k}\sigma_2\sigma_1\text{ (applying $2k$ $CCs$)} \\&\rightarrow (\Delta_n^2)^{(n-1)k}(\sigma_{n-1}\cdots\sigma_1\sigma_1\cdots\sigma_{n-1})^k\sigma_{n-1}\cdots(\sigma_3\sigma_2\sigma_1\sigma_1\sigma_2\sigma_3)^{k}\sigma_3(\sigma_2\sigma_1\sigma_1\sigma_2)^k\sigma_2(\sigma_1\sigma_1)^k\sigma_1\text{ (applying $k$ $CCs$)}
\end{align*}

Lemma \ref{iso3} tells us that our braid word, excluding the initial $(\Delta_n^2)^{(n-1)k}$, can be converted to $(\sigma_{n-1}\cdots\sigma_1)^{nk+1}$ using braid isotopy and the Conjugation Rule. Bear in mind that we can still use the Conjugation Rule on this subword here because $\Delta_n^2$ commutes with everything. This implies
\begin{align*}
&(\Delta_n^2)^{(n-1)k}(\sigma_{n-1}\cdots\sigma_1\sigma_1\cdots\sigma_{n-1})^k\sigma_{n-1}\cdots(\sigma_3\sigma_2\sigma_1\sigma_1\sigma_2\sigma_3)^{k}\sigma_3(\sigma_2\sigma_1\sigma_1\sigma_2)^k\sigma_2(\sigma_1\sigma_1)^k\sigma_1 \\&=(\Delta_n^2)^{(n-1)k}(\sigma_{n-1}\cdots\sigma_1)^{nk+1}\text{ using Lemma \ref{iso3}}
\\&=(\sigma_{n-1}\cdots\sigma_1)^{n^2k+1}\\&=T(n,n^2k+1).
\end{align*}

Note that $u(T(n+1,(n^2-1)k+1)-u(T(n,n^2k+1)=\frac{(n)((n^2-1)k)}{2}-\frac{(n-1)(n^2k)}{2}=\frac{n^2k-nk}{2}=\frac{(n)(n-1)}{2}k$. The number of crossing changes we have made is $(n-1)k+(n-2)k+\cdots+2k+k=\frac{(n)(n-1)}{2}k$, which is exactly the difference in their unknotting numbers. Therefore, the torus knot $T(n,n^2k+1) \leq_g T(n+1,(n^2-1)k+1)$.
\end{proof}

\begin{thm}
\label{cin}
If $n$ and $k$ are positive integers, then  $T(n,n^2k+n+1) \leq_g T(n+1,(n^2-1)k+n)$.
\end{thm}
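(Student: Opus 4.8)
The plan is to run the proof of Theorem \ref{ci} almost verbatim, inserting one preliminary move to absorb the longer tail. Write $b=(n^2-1)k+n=(n+1)(n-1)k+n$ and extract full twists to get $T(n+1,(n^2-1)k+n)=(\Delta_{n+1}^2)^{(n-1)k}(\sigma_n\cdots\sigma_1)^n$. In Theorem \ref{ci} the corresponding remainder was the single block $\sigma_n\cdots\sigma_1$, whereas here it is $(\sigma_n\cdots\sigma_1)^n$, and taming this block is the only genuinely new point; I expect it to be the main obstacle, since the $n$ copies of $\sigma_n$ inside it are not adjacent and so cannot simply be cancelled in pairs.

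The key idea is to isolate a single $\sigma_n$ inside the tail using the torus symmetry identity already proved inside Lemma \ref{iso2}: the relation $(\sigma_{m-1}\cdots\sigma_1)^{m-1}=(\sigma_1\cdots\sigma_{m-2})\sigma_{m-1}(\sigma_1\cdots\sigma_{m-2})^{m-1}$, taken at $m=n+1$, is a braid isotopy (no crossing changes) giving $(\sigma_n\cdots\sigma_1)^n=Q\sigma_n Q^n$, where $Q=\sigma_1\cdots\sigma_{n-1}$. The crucial features are that the right-hand side contains exactly one $\sigma_n$ and spins off an extra copy of the full twist $Q^n=\Delta_n^2$; this surplus twist is precisely what will upgrade the output from the $T(n,n^2k+1)$ of Theorem \ref{ci} to $T(n,n^2k+n+1)$.

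Next, exactly as in Theorem \ref{ci}, I would unfurl $(\Delta_{n+1}^2)^{(n-1)k}=(\Delta_n^2)^{(n-1)k}(\sigma_n\cdots\sigma_1\sigma_1\cdots\sigma_n)^{(n-1)k}$ by Lemmas \ref{iso2} and \ref{iso1}, commute the wrapping factor $(\sigma_n\cdots\sigma_1\sigma_1\cdots\sigma_n)^{(n-1)k}$ past $Q$ (whose letters all have index $\le n-1$) by Lemma \ref{iso1}, and collapse it against the lone $\sigma_n$. Writing $C=\sigma_{n-1}\cdots\sigma_1\sigma_1\cdots\sigma_{n-1}$, the $(n-1)k$ adjacent pairs of $\sigma_n$ cancel under the Crossing Change Rule, leaving a single $\sigma_n$ which is then deleted by Markov destabilization. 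This produces, on $n$ strands and at a cost of exactly $(n-1)k$ crossing changes, the word $(\Delta_n^2)^{(n-1)k}\,Q\,C^{(n-1)k}\,Q^n$.

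It remains to simplify the middle factor $QC^{(n-1)k}$. Setting $P=\sigma_{n-1}\cdots\sigma_1$ and noting that $C$ is a palindrome, one sees that $QC^{(n-1)k}$ is the letter-by-letter reversal of $C^{(n-1)k}P$, which is exactly the word appearing after the first stage of the proof of Theorem \ref{ci}. Every one of the five rules, and in particular the Crossing Change Rule, is preserved under reversing a braid word, so the reduction $C^{(n-1)k}P\to P^{nk+1}$ carried out there reverses to $QC^{(n-1)k}\to Q^{nk+1}$ using the same $\tfrac{(n-1)(n-2)}{2}k$ crossing changes (equivalently, one reruns the multi-step reduction of Theorem \ref{ci} in the ascending convention). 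Using $\Delta_n^2=Q^n$ to combine powers, the braid becomes \[(\Delta_n^2)^{(n-1)k}Q^{nk+1}Q^n=Q^{n(n-1)k+nk+1+n}=Q^{n^2k+n+1}=T(n,n^2k+n+1).\] The total number of crossing changes is $(n-1)k+\tfrac{(n-1)(n-2)}{2}k=\tfrac{n(n-1)}{2}k=u(T(n+1,(n^2-1)k+n))-u(T(n,n^2k+n+1))$, and since the entire process uses only the five rules, Theorem \ref{rules} gives $T(n,n^2k+n+1)\leq_g T(n+1,(n^2-1)k+n)$.
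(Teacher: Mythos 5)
Your proposal is correct and takes essentially the same approach as the paper: the paper likewise peels off $(\Delta_{n+1}^2)^{(n-1)k}$ and uses the Lemma \ref{iso2}-style isotopy to rewrite the leftover $(\sigma_n\cdots\sigma_1)^n$ as a single-$\sigma_n$ word times an extra central full twist $\Delta_n^2$ (in the form $\sigma_n\cdots\sigma_1\Delta_n^2$), then reruns the Theorem \ref{ci} reduction, with the surplus twist raising the output from $n^2k+1$ to $n^2k+n+1$ twists. Your ascending-convention decomposition $Q\sigma_nQ^n$ and the reversal-invariance argument for transferring the Theorem \ref{ci} cascade are presentational variants of the paper's ``proceed in the same way as in the proof of Theorem \ref{ci}'' step, not a different method.
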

\begin{proof}
We first note that by isotopy, the subword $(\sigma_n\cdots\sigma_1)^n$ can be changed into $\sigma_n(\sigma_{n-1}\cdots\sigma_1)^{n+1}$ using identical logic as in the proof of Lemma \ref{iso2}.
\begin{align*}
&T(n+1,(n^2-1)k+n) \\&= (\sigma_n\cdots\sigma_1)^{(n+1)(n-1)k+n} \\&= ((\sigma_n\cdots\sigma_1)^{(n+1)})^{(n-1)k}(\sigma_n\cdots\sigma_1)^n \\&= (\Delta_{n+1}^2)^{(n-1)k}\sigma_n(\sigma_{n-1}\cdots\sigma_1)^{n+1}  \\&= (\Delta_{n+1}^2)^{(n-1)k}\sigma_n\sigma_{n-1}\cdots\sigma_1(\sigma_{n-1}\cdots\sigma_1)^n \\&= (\Delta_{n+1}^2)^{(n-1)k}\sigma_n\cdots\sigma_1\Delta_n^2 \\&= (\sigma_n\cdots\sigma_1\sigma_1\cdots\sigma_n\Delta_n^2)^{(n-1)k}\sigma_n\cdots\sigma_1\Delta_n^2 \\&= (\Delta_n^2)^{(n-1)k}(\sigma_n\cdots\sigma_1\sigma_1\cdots\sigma_n)^{(n-1)k}\sigma_n\cdots\sigma_1\Delta_n^2 \\&=(\Delta_n^2)^{(n-1)k+1}(\sigma_n\cdots\sigma_1\sigma_1\cdots\sigma_n)^{(n-1)k}\sigma_n\cdots\sigma_1.
\end{align*}
From here, we proceed in the same way as in the proof of Theorem \ref{ci}. Since we have one extra $\Delta_n^2$, we end with $T(n,n^2k+n+1)$ as the second number is increased by $n$ from the previous case.
\end{proof}

When proving adjacencies between torus knots of the form $T(n,a)$ and $T(n+1,b)$, Theorems \ref{ci} and \ref{cin} cover two cases: when $b$ is congruent to either 1 or $n$ modulo $n^2-1$ respectively. In general, the number of cases is equal to the amount of numbers between 1 and $n^2-1$ inclusive that are coprime with $n+1$, which equals $(n-1)(\phi(n+1))$, where $\phi$ is the Euler Totient function. For $n=2$, this value is 2, so all cases are considered. In fact, the combinations of Theorems \ref{ci} and \ref{cin} give an alternate proof of one direction of Feller's Theorem 3 (Theorem \ref{fel3}), as it comes as a corollary that $T(2,a) \leq_g (3,b)$ if $a\le\frac{4b+1}{3}$. Next, we consider the case when $n=3$, implying that $b$ must be congruent to either 1, 3, 5, or 7 modulo $8=3^2-1$. These cases are dealt with in the four lemmas below, the first two of which follow from Theorems 4.1 and 4.2 respectively.

\begin{lem}
\label{a}
If $k$ is a positive integer, then  $T(3,9k+1) \leq_g T(4,8k+1)$.
\end{lem}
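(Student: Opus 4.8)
The plan is to recognize Lemma~\ref{a} as the special case $n=2$ of Theorem~\ref{ci} and simply verify that the parameters match. Theorem~\ref{ci} states that $T(n,n^2k+1) \leq_g T(n+1,(n^2-1)k+1)$ for all positive integers $n$ and $k$. Setting $n=2$ gives $n^2k+1 = 4k+1$ and $(n^2-1)k+1 = 3k+1$, so the theorem yields $T(2,4k+1) \leq_g T(3,3k+1)$. This is \emph{not} the claimed statement, so a direct substitution into Theorem~\ref{ci} with $n=2$ does not work; instead the correct reading is $n=3$.

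First I would set $n=3$ in Theorem~\ref{ci}. Then $n^2k+1 = 9k+1$, $n+1 = 4$, and $(n^2-1)k+1 = 8k+1$. Substituting, Theorem~\ref{ci} gives precisely
\[
T(3,9k+1) \leq_g T(4,8k+1),
\]
which is exactly the statement of Lemma~\ref{a}. So the entire content of the lemma is the observation that it is the $n=3$ instance of the already-proven Theorem~\ref{ci}; no new braid manipulation is required.

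The only thing to check is that the hypotheses of Theorem~\ref{ci} are satisfied, namely that $n$ and $k$ are positive integers: here $n=3$ and $k$ is an arbitrary positive integer, so the hypotheses hold. I would also note in passing that both indices are genuinely coprime torus-knot parameters for each $k$ (so that $T(3,9k+1)$ and $T(4,8k+1)$ are honest knots rather than links): $\gcd(3,9k+1)=\gcd(3,1)=1$ and $\gcd(4,8k+1)=\gcd(4,1)=1$, consistent with the framework. Since these parenthetical remarks are immediate, the proof is essentially a one-line citation of Theorem~\ref{ci}.

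There is no real obstacle here — the lemma is stated separately only to organize the four residue cases ($b \equiv 1,3,5,7 \pmod{8}$) needed to assemble the general $T(3,a) \leq_g T(4,b)$ result of Theorem~\ref{thrfou}, and this particular case ($b \equiv 1$) falls out of Theorem~\ref{ci} for free. The main ``work'' is simply matching the arithmetic in the statement to the substitution $n=3$, which the preceding discussion flagged when it noted that the $n=3$ cases correspond to $b$ congruent to $1,3,5,7$ modulo $8 = 3^2-1$.
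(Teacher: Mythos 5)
Your proposal is correct and matches the paper exactly: the paper offers no separate argument for Lemma \ref{a}, stating only that it (together with Lemma \ref{b}) ``follows from Theorems \ref{ci} and \ref{cin} respectively,'' which is precisely your substitution $n=3$ into Theorem \ref{ci} giving $T(3,9k+1)\leq_g T(4,8k+1)$. Your added coprimality checks $\gcd(3,9k+1)=\gcd(4,8k+1)=1$ are a harmless bonus the paper leaves implicit.
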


\begin{lem}
\label{b}
If $k$ is a positive integer, then  $T(3,9k+4) \leq_g T(4,8k+3)$.
\end{lem}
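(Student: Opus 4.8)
The plan is to obtain Lemma \ref{b} as the immediate specialization of Theorem \ref{cin} (Theorem 4.2) to the case $n=3$. Recall that Theorem \ref{cin} asserts $T(n,n^2k+n+1)\leq_g T(n+1,(n^2-1)k+n)$ for all positive integers $n$ and $k$. Substituting $n=3$ gives the left-hand knot $T(3,\,9k+3+1)=T(3,9k+4)$ and the right-hand knot $T(4,\,(9-1)k+3)=T(4,8k+3)$, which is precisely the claimed adjacency $T(3,9k+4)\leq_g T(4,8k+3)$. So the first and only essential step is to identify this as a direct corollary.

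Before invoking the theorem I would confirm that both sides are genuine torus knots, i.e. that the two index pairs are coprime, so that the notation $T(p,q)$ is meaningful. Here $9k+4\equiv 1\pmod 3$, giving $\gcd(3,9k+4)=1$, and $8k+3$ is odd, giving $\gcd(4,8k+3)=1$; both conditions hold for every positive integer $k$. In fact these coprimality facts are automatic from the general form of Theorem \ref{cin}, since $n^2k+n+1\equiv 1\pmod n$ and $(n^2-1)k+n\equiv -1\pmod{n+1}$, so no separate argument is truly needed.

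There is essentially no obstacle here: the entire braid-word construction---removing all $\sigma_n$ via Proposition \ref{sign}, applying Lemmas \ref{iso1}, \ref{iso2}, and \ref{iso3}, and tallying the crossing changes against the difference of unknotting numbers---was already carried out in the proof of Theorem \ref{cin}. The only content of the present lemma is the arithmetic verification above, so the proof reduces to a single sentence citing Theorem \ref{cin} with $n=3$. If one instead wished to present the argument self-contained, the step worth reproducing would be Theorem \ref{cin}'s observation that the extra factor of $\Delta_n^2$ raises the second torus-knot index by $n$; for $n=3$ this is exactly what converts the $T(3,9k+1)\leq_g T(4,8k+1)$ pattern of Lemma \ref{a} into the $T(3,9k+4)\leq_g T(4,8k+3)$ pattern claimed here.
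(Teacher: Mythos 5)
Your proposal is correct and matches the paper exactly: the paper states that Lemmas \ref{a} and \ref{b} ``follow from Theorems 4.1 and 4.2 respectively,'' i.e.\ Lemma \ref{b} is precisely Theorem \ref{cin} specialized to $n=3$, with $n^2k+n+1=9k+4$ and $(n^2-1)k+n=8k+3$. Your coprimality check is a harmless (and accurate) addition that the paper leaves implicit.
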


\begin{lem}
\label{c}
If $k$ is a positive integer, then  $T(3,9k+5) \leq_g T(4,8k+5)$.
\end{lem}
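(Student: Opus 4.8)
The plan is to start from the braid word $T(4,8k+5)=(\sigma_3\sigma_2\sigma_1)^{8k+5}$ and reduce it, via the five rules, to a braid whose closure is $T(3,9k+5)$; Theorem \ref{rules} then yields the adjacency. Writing $8k+5=4(2k+1)+1$, I would first record $T(4,8k+5)=(\Delta_4^2)^{2k+1}(\sigma_3\sigma_2\sigma_1)$, and then apply Lemma \ref{iso2} followed by Lemma \ref{iso1} to peel off the bottom twist and obtain $(\Delta_3^2)^{2k+1}(\sigma_3\sigma_2\sigma_1\sigma_1\sigma_2\sigma_3)^{2k+1}\sigma_3\sigma_2\sigma_1$. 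Compared with Theorem \ref{ci} (the case $8k+1$), the only difference is one extra factor of $\Delta^2$, so the exponent is $2k+1$ rather than $2k$; this extra full twist plays the same bookkeeping role as in Theorem \ref{cin}.

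Next I would eliminate $\sigma_3$. In $(\sigma_3\sigma_2\sigma_1\sigma_1\sigma_2\sigma_3)^{2k+1}\sigma_3\sigma_2\sigma_1$ consecutive blocks meet in a pair $\sigma_3\sigma_3$, and the last block meets the trailing $\sigma_3$ in one more such pair; deleting all $2k+1$ of these pairs with the Crossing Change Rule (that is, $2k+1$ crossing changes) leaves a single $\sigma_3$, giving $(\Delta_3^2)^{2k+1}\sigma_3(\sigma_2\sigma_1\sigma_1\sigma_2)^{2k+1}\sigma_2\sigma_1$, which I would destabilize by the Markov Destabilization Rule to land on the three-strand word $W=(\Delta_3^2)^{2k+1}(\sigma_2\sigma_1\sigma_1\sigma_2)^{2k+1}\sigma_2\sigma_1$.

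Now I need $k+1$ further crossing changes carrying $W$ to a word for $T(3,9k+5)$; since $u(T(4,8k+5))-u(T(3,9k+5))=(12k+6)-(9k+4)=3k+2=(2k+1)+(k+1)$, the count will be exactly right. Splitting $(\sigma_2\sigma_1\sigma_1\sigma_2)^{2k+1}\sigma_2\sigma_1=(\sigma_2\sigma_1\sigma_1\sigma_2)^{k}\bigl[(\sigma_2\sigma_1\sigma_1\sigma_2)^{k+1}\sigma_2\sigma_1\bigr]$ and cancelling the $k+1$ pairs $\sigma_2\sigma_2$ inside the bracket (exactly the $\sigma_2$-reduction used in the proof of Theorem \ref{ci}) turns the bracket into $\sigma_2(\sigma_1\sigma_1)^{k+1}\sigma_1$. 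Since $(\Delta_3^2)^{2k+1}=(\sigma_2\sigma_1)^{6k+3}$ commutes with everything, Lemma \ref{iso3} then identifies $(\sigma_2\sigma_1\sigma_1\sigma_2)^{k}\sigma_2(\sigma_1\sigma_1)^{k+1}\sigma_1=(\sigma_2\sigma_1)^{3k+1}\sigma_1^2$, so that $W$ becomes $(\sigma_2\sigma_1)^{9k+4}\sigma_1^2$ after $3k+2$ crossing changes in total.

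The main obstacle, and the genuinely new feature compared with Theorems \ref{ci} and \ref{cin}, is that this final word is $(\sigma_2\sigma_1)^{9k+4}\sigma_1^2$ rather than literally $(\sigma_2\sigma_1)^{9k+5}$: the junction-cancellation unavoidably consumes the trailing $\sigma_2$ and leaves an extra $\sigma_1^2$. I therefore need the separate claim that the closure of $(\sigma_2\sigma_1)^{9k+4}\sigma_1^2$ is $T(3,9k+5)$, which suffices because Theorem \ref{rules} only requires reaching some braid word whose closure is $T(3,9k+5)$. Writing $(\sigma_2\sigma_1)^{9k+4}\sigma_1^2=(\Delta_3^2)^{3k+1}\sigma_2\sigma_1^3$ and $(\sigma_2\sigma_1)^{9k+5}=(\Delta_3^2)^{3k+1}\sigma_2\sigma_1\sigma_2\sigma_1$, I would show the two closures agree by applying $\sigma_1\sigma_2\sigma_1=\sigma_2\sigma_1\sigma_2$ and conjugation to bring $(\Delta_3^2)^{3k+1}\sigma_2\sigma_1\sigma_2\sigma_1$ to $(\Delta_3^2)^{3k+1}\sigma_2^3\sigma_1$, and then using the vertical-axis flip $\sigma_1\leftrightarrow\sigma_2$ (which fixes the central $\Delta_3^2$ and preserves closure type, exactly the move already invoked in Section 2) together with a cyclic conjugation to match $(\Delta_3^2)^{3k+1}\sigma_2\sigma_1^3$. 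Verifying that this flip-and-conjugate identification is legitimate, and that every preceding step really stays within the five-rules framework, is the point I would check most carefully.
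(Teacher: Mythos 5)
Your proposal is correct, and up to the final isotopy bookkeeping it coincides with the paper's proof: the first stage (writing $T(4,8k+5)=(\Delta_4^2)^{2k+1}\sigma_3\sigma_2\sigma_1$, peeling with Lemmas \ref{iso2} and \ref{iso1}, deleting $2k+1$ pairs $\sigma_3\sigma_3$, then destabilizing) is identical, and your second-stage cancellation of $k+1$ pairs $\sigma_2\sigma_2$ produces literally the same intermediate word $(\Delta_3^2)^{2k+1}(\sigma_2\sigma_1\sigma_1\sigma_2)^k\sigma_2(\sigma_1\sigma_1)^{k+1}\sigma_1$ as the paper's (the paper groups it as $(\sigma_2\sigma_1\sigma_1\sigma_2)^k\sigma_2\sigma_1\sigma_1(\sigma_1\sigma_1)^k\sigma_1$), with the same total count $3k+2$. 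You diverge only in the endgame. The paper commutes the trailing $\sigma_1$'s around the end to interleave $(\sigma_1\sigma_1)^k$ into $(\sigma_2\sigma_1\sigma_1\sigma_2)^k$, giving $(\sigma_2\sigma_1\sigma_1\sigma_2\sigma_1\sigma_1)^k=(\sigma_2\sigma_1\sigma_2\sigma_1\sigma_2\sigma_1)^k$, and lands directly on $(\sigma_2\sigma_1)^{9k+5}$, so your ``unavoidable'' leftover $\sigma_1^2$ never appears in the paper's route. Your detour through Lemma \ref{iso3} is nonetheless legitimate: the one point you rightly flagged --- that Lemma \ref{iso3} uses the Conjugation Rule, which acts on the whole word, while your $\sigma_1^2$ sits outside the subword --- goes through because the only letters Lemma \ref{iso3} needs to cycle form a block of $\sigma_1$'s, which commutes with your trailing $\sigma_1^2$, so whole-word conjugation (with the central $(\Delta_3^2)^{2k+1}$ passing through) realizes exactly the needed move. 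Your closing identification is also true, but the vertical-axis flip is more machinery than required: from $T(3,9k+5)=(\Delta_3^2)^{3k+1}\sigma_2\sigma_1\sigma_2\sigma_1$, conjugating the leading $\sigma_2$ to the end gives $(\Delta_3^2)^{3k+1}\sigma_1\sigma_2\sigma_1\sigma_2$, one Neighboring Generators move gives $(\Delta_3^2)^{3k+1}\sigma_1^2\sigma_2\sigma_1$, and conjugating the final $\sigma_2\sigma_1$ to the front gives $(\Delta_3^2)^{3k+1}\sigma_2\sigma_1^3=(\sigma_2\sigma_1)^{9k+4}\sigma_1^2$ --- so conjugation and a single braid relation suffice, keeping the entire argument inside the five rules (the flip, while closure-preserving and invoked in Section 2, is not one of them). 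In short: same decomposition and same crossing changes as the paper; your version buys a systematic reuse of Lemma \ref{iso3} at the cost of an extra closure identification that the paper's direct reassembly avoids.
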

\begin{proof}
First, we eliminate all $\sigma_3$ from our braid word by a similar process to Proposition \ref{sign}. We have that
\begin{align*}
&T(4,8k+5)
\\&=(\Delta_4^2)^{2k+1}\sigma_3\sigma_2\sigma_1\\&=(\Delta_3^2\sigma_3\sigma_2\sigma_1\sigma_1\sigma_2\sigma_3)^{2k+1}\sigma_3\sigma_2\sigma_1\\&=(\Delta_3^2)^{2k+1}(\sigma_3\sigma_2\sigma_1\sigma_1\sigma_2\sigma_3)^{2k+1}\sigma_3\sigma_2\sigma_1\\&\rightarrow(\Delta_3^2)^{2k+1}\sigma_3(\sigma_2\sigma_1\sigma_1\sigma_2)^{2k+1}\sigma_2\sigma_1\text{ (applying $2k+1$ $CCs$)}\\&=(\Delta_3^2)^{2k+1}(\sigma_2\sigma_1\sigma_1\sigma_2)^{2k+1}\sigma_2\sigma_1
\end{align*}
Then, we turn our braid on three strands into the required torus knot using similar ideas to Theorem \ref{ci}, with some changes to account for the extra copy of $\sigma_2\sigma_1\sigma_1\sigma_2$.
\begin{align*}
&=(\Delta_3^2)^{2k+1}(\sigma_2\sigma_1\sigma_1\sigma_2)^k\sigma_2\sigma_1\sigma_1\sigma_2(\sigma_2\sigma_1\sigma_1\sigma_2)^k\sigma_2\sigma_1
\\&\rightarrow (\Delta_3^2)^{2k+1}(\sigma_2\sigma_1\sigma_1\sigma_2)^k\sigma_2\sigma_1\sigma_1(\sigma_1\sigma_1)^k\sigma_1\text{ (applying $k+1$ $CCs$)}
\\&=(\Delta_3^2)^{2k+1}(\sigma_2\sigma_1\sigma_1\sigma_2)^k(\sigma_1\sigma_1)^k\sigma_1\sigma_2\sigma_1\sigma_1\text{ commuting $\sigma_1$'s around the end.}\\&= (\Delta_3^2)^{2k+1}(\sigma_2\sigma_1\sigma_1\sigma_2\sigma_1\sigma_1)^k\sigma_1\sigma_2\sigma_1\sigma_1 \\&= (\Delta_3^2)^{2k+1}(\sigma_2\sigma_1\sigma_2\sigma_1\sigma_2\sigma_1)^k\sigma_2\sigma_1\sigma_2\sigma_1
\\&= (\sigma_2\sigma_1)^{3(2k+1)+3(k)+2} \\&= (\sigma_2\sigma_1)^{9k+5} \\&= T(3,9k+5).
\end{align*}
Note that $u(T(4,8k+5))-u(T(3,9k+5)) = (12k+6)-(9k+4) = 3k+2$, which is exactly the number of crossing changes we have used. Therefore, the torus knot $T(3,9k+1) \leq_g T(4,8k+1)$.
\end{proof}

\begin{lem}
\label{d}
If $k$ is a positive integer, then  $T(3,9k+8) \leq_g T(4,8k+7)$.
\end{lem}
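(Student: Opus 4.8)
The plan is to mirror the proof of Lemma \ref{c} almost verbatim, the only new wrinkle being that $8k+7=4(2k+1)+3$ leaves a remainder of $3$ rather than $1$ when we strip off full twists. First I would record the target: since $u(T(4,8k+7))=\frac{3(8k+6)}{2}=12k+9$ and $u(T(3,9k+8))=\frac{2(9k+7)}{2}=9k+7$, Theorem \ref{rules} will give the adjacency provided the whole reduction uses exactly $3k+2$ crossing changes. I would split these as $2k+1$ changes to clear every $\sigma_3$ (matching $\lfloor(8k+7)/4\rfloor=2k+1$ from Proposition \ref{sign}) and $k+1$ changes to collapse the resulting three-strand braid.

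For the first phase I would write $T(4,8k+7)=(\Delta_4^2)^{2k+1}(\sigma_3\sigma_2\sigma_1)^3$ and handle the remainder with the same device used in Theorem \ref{cin}, namely $(\sigma_3\sigma_2\sigma_1)^3=\sigma_3(\sigma_2\sigma_1)^4=\sigma_3\sigma_2\sigma_1\Delta_3^2$, which isolates a single extra $\sigma_3$ and produces one spare full twist $\Delta_3^2$. Expanding each $\Delta_4^2$ by Lemma \ref{iso2} and separating the central factors by Lemma \ref{iso1} turns the word into $(\Delta_3^2)^{2k+1}(\sigma_3\sigma_2\sigma_1\sigma_1\sigma_2\sigma_3)^{2k+1}\sigma_3\sigma_2\sigma_1\Delta_3^2$. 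Exactly as in Lemma \ref{c}, cancelling the $2k$ interior $\sigma_3\sigma_3$ pairs together with the one pair formed against the trailing $\sigma_3$ costs $2k+1$ crossing changes and leaves a single $\sigma_3$, which I would move to an end past the central $\Delta_3^2$ blocks by the Conjugation Rule and delete by Markov destabilization. After pulling the spare central $\Delta_3^2$ to the front, the three-strand word is $(\Delta_3^2)^{2k+2}(\sigma_2\sigma_1\sigma_1\sigma_2)^{2k+1}\sigma_2\sigma_1$.

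The key observation for the second phase is that this is precisely $\Delta_3^2$ times the three-strand word reached in Lemma \ref{c}. Since $\Delta_3^2=(\sigma_2\sigma_1)^3$ is central in $B_3$, none of the crossing changes or braid relations in the second phase of Lemma \ref{c} touch the leading full-twist block, so I would apply that identical sequence of $k+1$ crossing changes and isotopies to the factor $(\sigma_2\sigma_1\sigma_1\sigma_2)^{2k+1}\sigma_2\sigma_1$, converting it to $(\sigma_2\sigma_1)^{3k+2}$. The whole word then reads $(\Delta_3^2)^{2k+2}(\sigma_2\sigma_1)^{3k+2}=(\sigma_2\sigma_1)^{3(2k+2)+3k+2}=(\sigma_2\sigma_1)^{9k+8}=T(3,9k+8)$. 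The crossing changes total $(2k+1)+(k+1)=3k+2=u(T(4,8k+7))-u(T(3,9k+8))$, so Theorem \ref{rules} yields $T(3,9k+8)\leq_g T(4,8k+7)$.

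The main obstacle lies entirely in the first phase: unlike Lemma \ref{c}, whose remainder is the single block $\sigma_3\sigma_2\sigma_1$, here I must repackage the three extra copies in $(\sigma_3\sigma_2\sigma_1)^3$ into one leftover $\sigma_3$ plus one central $\Delta_3^2$, so that clearing $\sigma_3$ still costs only $2k+1$ crossing changes and the surviving three-strand braid differs from Lemma \ref{c}'s by exactly one central full twist. Once that repackaging is verified — together with the minor point that the lone $\sigma_3$ can be conjugated past the central $\Delta_3^2$ factors to a position where Markov destabilization applies — the extra $\Delta_3^2$ simply advances $T(3,9k+5)$ to $T(3,9k+8)$ and the computation collapses onto that of Lemma \ref{c}.
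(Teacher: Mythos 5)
Your proof is correct and shares the paper's two-phase skeleton ($2k+1$ crossing changes to clear the $\sigma_3$'s, then $k+1$ more on three strands, totalling $3k+2=u(T(4,8k+7))-u(T(3,9k+8))$), but your treatment of the remainder $(\sigma_3\sigma_2\sigma_1)^3$ is genuinely different. The paper rewrites $(\sigma_3\sigma_2\sigma_1)^3=\sigma_1\sigma_2\sigma_3\Delta_3^2$ --- a true braid isotopy --- and so ends its first phase at $(\Delta_3^2)^{2k+2}\sigma_1\sigma_2(\sigma_2\sigma_1\sigma_1\sigma_2)^{2k+1}$, whose stray $\sigma_1\sigma_2$ prefix forces a bespoke second phase with several ad hoc commutations. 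You instead repackage the remainder as $\sigma_3\sigma_2\sigma_1\Delta_3^2$, so phase one ends at $(\Delta_3^2)^{2k+2}(\sigma_2\sigma_1\sigma_1\sigma_2)^{2k+1}\sigma_2\sigma_1$, which is exactly the three-strand word of Lemma \ref{c} multiplied by one central full twist; since $\Delta_3^2$ is central in $B_3$, Lemma \ref{c}'s second phase applies verbatim and the extra twist contributes $(\sigma_2\sigma_1)^3$, advancing $9k+5$ to $9k+8$. That clean reduction to Lemma \ref{c} is what your route buys. One caveat you should make explicit: $(\sigma_3\sigma_2\sigma_1)^3=\sigma_3(\sigma_2\sigma_1)^4$ is \emph{not} a braid isotopy on the nose --- writing $c$ for the $4$-cycle induced on the strands by $\sigma_3\sigma_2\sigma_1$, the left side induces $c^3=c^{-1}$ while the right side induces $c$, so no sequence of Distant/Neighboring Generators moves alone can effect it. It holds only at the level of closures, and in your setting it is legitimate because the prefix $(\Delta_4^2)^{2k+1}$ is central, so the Conjugation Rule together with the Etnyre--Van Horn-Morris argument (as invoked in Proposition \ref{sign}) bridges the gap; this is the same looseness already present in the paper's own Theorem \ref{cin}, which asserts the identical rewriting ``by isotopy,'' whereas the paper's proof of this lemma sidesteps it by using the honest isotopy to $\sigma_1\sigma_2\sigma_3\Delta_3^2$. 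Two small remarks: by Theorem \ref{rules} the crossing-change count is a sanity check rather than a logical requirement, and the lone $\sigma_3$ may be deleted in place by the Markov Destabilization Rule as stated, so your conjugation step before destabilizing is unnecessary (though harmless).
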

\begin{proof}
As in the previous lemma, we start by removing all $\sigma_3$ from our braid word.
\begin{align*}
&T(4,8k+7)\\&=((\sigma_3\sigma_2\sigma_1)^4)^{2k+1}\sigma_3\sigma_2\sigma_1\sigma_3\sigma_2\sigma_1\sigma_3\sigma_2\sigma_1\\&=(\Delta_4^2)^{2k+1}\sigma_1\sigma_2\sigma_3\sigma_2\sigma_1\sigma_2\sigma_1\sigma_2\sigma_1\\&=(\Delta_3^2\sigma_3\sigma_2\sigma_1\sigma_1\sigma_2\sigma_3)^{2k+1}\sigma_1\sigma_2\sigma_3\Delta_3^2\\&=(\Delta_3^2)^{2k+2}\sigma_1\sigma_2(\sigma_3\sigma_2\sigma_1\sigma_1\sigma_2\sigma_3)^{2k+1}\sigma_3\\&\rightarrow(\Delta_3^2)^{2k+2}\sigma_1\sigma_2\sigma_3(\sigma_2\sigma_1\sigma_1\sigma_2)^{2k+1}\text{ (applying $2k+1$ $CCs$)}\\&=(\Delta_3^2)^{2k+2}\sigma_1\sigma_2(\sigma_2\sigma_1\sigma_1\sigma_2)^{2k+1}    
\end{align*}
Then, we once more convert our three-strand braid to the required form, using a few more steps to deal with the technicalities from the added generators.
\begin{align*}
&=(\Delta_3^2)^{2k+2}\sigma_1\sigma_2(\sigma_2\sigma_1\sigma_1\sigma_2)^k\sigma_2\sigma_1\sigma_1\sigma_2(\sigma_2\sigma_1\sigma_1\sigma_2)^k
\\&\rightarrow (\Delta_3^2)^{2k+2}\sigma_1(\sigma_1\sigma_1)^k\sigma_1\sigma_1\sigma_2(\sigma_2\sigma_1\sigma_1\sigma_2)^k\text{ (applying $k+1$ $CCs$)}  \\&=(\Delta_3^2)^{2k+2}(\sigma_1\sigma_1)^k\sigma_1\sigma_1\sigma_1\sigma_2(\sigma_2\sigma_1\sigma_1\sigma_2)^k \\&=(\Delta_3^2)^{2k+2}\sigma_1\sigma_1\sigma_1\sigma_2(\sigma_2\sigma_1\sigma_1\sigma_2)^k(\sigma_1\sigma_1)^k\text{ commuting $\sigma_1$'s around the end.}\\&= (\Delta_3^2)^{2k+2}\sigma_1\sigma_1\sigma_1\sigma_2(\sigma_2\sigma_1\sigma_1\sigma_2\sigma_1\sigma_1)^k \\&= (\Delta_3^2)^{2k+2}\sigma_1\sigma_1\sigma_1\sigma_2(\sigma_2\sigma_1\sigma_2\sigma_1\sigma_2\sigma_1)^k \\&= ((\sigma_2\sigma_1)^3)^{2k+2}\sigma_1\sigma_2\sigma_1\sigma_1((\sigma_2\sigma_1)^3)^k\text{ commuting $\sigma_1$'s around the end.} \\&= ((\sigma_2\sigma_1)^3)^{2k+2}\sigma_2\sigma_1\sigma_2\sigma_1((\sigma_2\sigma_1)^3)^k \\&= (\sigma_2\sigma_1)^{3(2k+2)+2+3(k)} \\&= (\sigma_2\sigma_1)^{9k+8} \\&= T(3,9k+8).
\end{align*}
Note that $u(T(4,8k+7))-u(T(3,9k+8)) = (12k+9)-(9k+7) = 3k+2$, which is exactly the number of crossing changes we have used. Therefore, the torus knot $T(3,9k+8) \leq_g T(4,8k+7)$.
\end{proof}

\begin{thm}
\label{thrfou}
Let $a$ and $b$ be positive integers such that $a$ is coprime to 3 and $b$ is coprime to 4. If $a\le\frac{9b+5}{8}$, then  $T(3,a) \leq_g T(4,b)$.
\end{thm}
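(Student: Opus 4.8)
The plan is to prove Theorem \ref{thrfou} by reducing the general statement to the four congruence-class lemmas (Lemmas \ref{a}--\ref{d}) combined with Feller's Theorem \ref{fel2}, which provides the ``vertical'' Gordian adjacencies needed to interpolate. First I would observe that, since $b$ is coprime to $4$, it is odd, and since $a$ is coprime to $3$, the hypothesis $a\le\frac{9b+5}{8}$ pins $a$ into a range controlled by the residue of $b$ modulo $8$. Writing $b$ in one of the forms $8k+1,\,8k+3,\,8k+5,\,8k+7$ (the odd residues coprime to $4$), Lemmas \ref{a}--\ref{d} supply, for each case, a specific torus knot $T(3,a^\ast)$ with $T(3,a^\ast)\le_g T(4,b)$, where $a^\ast$ is the \emph{largest} value of the second index achievable on three strands, namely $a^\ast=9k+1,\,9k+4,\,9k+5,\,9k+8$ respectively. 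The key point is that each of these $a^\ast$ is exactly the floor of $\frac{9b+5}{8}$ for the corresponding residue, so the lemmas already establish the theorem at the extremal value of $a$.

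The second step is to propagate the adjacency downward from $a^\ast$ to every smaller admissible $a$. For this I would invoke Theorem \ref{fel2}: whenever $a\le a^\ast$ and both $a,a^\ast$ are coprime to $3$, we have $T(3,a)\le_g T(3,a^\ast)$, since $(3,a)$ and $(3,a^\ast)$ are coprime pairs with $3\le 3$ and $a\le a^\ast$. Gordian adjacency is transitive (it is precisely the statement that one knot lies on an unknotting sequence of the other, and unknotting sequences concatenate), so chaining $T(3,a)\le_g T(3,a^\ast)\le_g T(4,b)$ yields $T(3,a)\le_g T(4,b)$ for all $a\le a^\ast$. Thus the full range $a\le\frac{9b+5}{8}$ is covered, because $a$ is an integer and hence $a\le\frac{9b+5}{8}$ is equivalent to $a\le\left\lfloor\frac{9b+5}{8}\right\rfloor=a^\ast$ in each residue class.

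The arithmetic bookkeeping to confirm that $\left\lfloor\frac{9b+5}{8}\right\rfloor$ really equals $9k+1,9k+4,9k+5,9k+8$ in the four cases is the step most prone to off-by-one errors, so I would verify it explicitly: for $b=8k+1$, $\frac{9b+5}{8}=\frac{72k+14}{8}=9k+1+\frac{6}{8}$, whose floor is $9k+1$; for $b=8k+3$, $\frac{72k+32}{8}=9k+4$ exactly; for $b=8k+5$, $\frac{72k+50}{8}=9k+6+\frac{2}{8}$, floor $9k+6$, but the largest three-strand index coprime to $3$ not exceeding $9k+6$ is $9k+5$, matching Lemma \ref{c}; and for $b=8k+7$, $\frac{72k+68}{8}=9k+8+\frac{4}{8}$, floor $9k+8$, matching Lemma \ref{d}. (The value $9k+6$ is divisible by $3$, so it is excluded by the coprimality hypothesis, which is exactly why Lemma \ref{c} stops at $9k+5$.) The main obstacle is therefore not a deep topological step but ensuring the coprimality constraint on $a$ meshes correctly with these floors, so that every admissible $a$ is genuinely dominated by the $a^\ast$ furnished by a lemma; once that is checked, transitivity and Theorem \ref{fel2} finish the proof mechanically.
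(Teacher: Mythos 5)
Your proposal is correct and follows essentially the same route as the paper: split on the residue of $b$ modulo $8$, use Lemmas \ref{a}--\ref{d} to handle the extremal value $a^\ast$ in each class (with the same observation that for $b\equiv 5 \pmod 8$ the floor $\left\lfloor\frac{9b+5}{8}\right\rfloor$ is divisible by $3$, so one drops to $9k+5$), and then propagate down to all smaller admissible $a$ via Feller's Theorem \ref{fel2} and transitivity of $\leq_g$. Your arithmetic checks out, and your third paragraph correctly repairs the slight overstatement in your first (that $a^\ast$ always equals the floor), so there is no gap.
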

\begin{proof}
This proof utilizes the four lemmas above, which cover the four possible cases for a torus knot of index four. We now go through each lemma and replace the number of twists in our 4-strand torus knot with $b$. For Lemma \ref{a}, $b=8k+1$, so $$T\left(3,9k+1\right)=T\left(3,9\left(\frac{b-1}{8}\right)+1\right)=T\left(3,\frac{9b}{8}-\frac{1}{8}\right)=T\left(3,\left\lfloor\frac{9b+5}{8}\right\rfloor\right)\le_g T(4,b).$$ For Lemma \ref{b}, $b=8k+3$, so $$T(3,9k+4)=T\left(3,9\left(\frac{b-3}{8}\right)+4\right)=T\left(3,\frac{9b}{8}+\frac{5}{8}\right)=T\left(3,\left\lfloor\frac{9b+5}{8}\right\rfloor\right)\le_g T(4,b).$$ For Lemma \ref{c}, $b=8k+5$, so $$T(3,9k+5)=T\left(3,9\left(\frac{b-5}{8}\right)+5\right)=T\left(3,\frac{9b}{8}-\frac{5}{8}\right)=T\left(3,\left\lfloor\frac{9b+5}{8}\right\rfloor-1\right)\le_g T(4,b).$$ This seems to break the trend, but note that $\left\lfloor\frac{9b+5}{8}\right\rfloor=\frac{9b+3}{8}$ in this case, which is divisible by 3 and is therefore not a torus knot. This implies we need to show the index one lower to prove the desired result. For Lemma \ref{d}, $b=8k+7$, so $$T(3,9k+8)=T\left(3,9\left(\frac{b-7}{8}\right)+8\right)=T\left(3,\frac{9b}{8}+\frac{1}{8}\right)=T\left(3,\left\lfloor\frac{9b+5}{8}\right\rfloor\right)\le_g T(4,b).$$ These results show that the torus knot $T\left(3,\left\lfloor\frac{9b+5}{8}\right\rfloor\right) \leq_g T(4,b)$. By Feller's Theorem 2 (Theorem \ref{fel2}), this means that the torus knot $T(3,a) \leq_g T(4,b)$ if $a\le\frac{9b+5}{8}$.
\end{proof}

We can also use a similar process to find Gordian adjacencies between torus knots whose indices differ by more than one. Below, we explore Gordian adjacencies between torus knots of index two and index four. Note that for $T(2,b)$ to be a torus knot, $b$ must be congruent to either 1 or 3 modulo 4. These provide the two cases we split our argument into. In each case, we first remove all $\sigma_3$ using a similar process to Proposition \ref{sign}, then after some technical manipulations remove all $\sigma_2$ in two consecutive stages.
\begin{lem}
\label{2a}
If $k$ is a positive integer, then  $T(2,6k+3) \leq_g T(4,4k+1)$.
\end{lem}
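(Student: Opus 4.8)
The plan is to establish the adjacency constructively through the five rules and then quote Theorem \ref{rules}, so that the only arithmetic I must check is that the number of crossing changes equals the gap in unknotting numbers. Here $u(T(4,4k+1)) = \frac{3\cdot 4k}{2} = 6k$ and $u(T(2,6k+3)) = \frac{6k+2}{2} = 3k+1$, so the goal is to turn the braid word $(\sigma_3\sigma_2\sigma_1)^{4k+1}$ into $\sigma_1^{6k+3}$ using \emph{exactly} $3k-1$ applications of the Crossing Change Rule, together with any number of isotopies and Markov destabilizations.

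First I would prepare the word as in Proposition \ref{sign}: write $T(4,4k+1)=(\Delta_4^2)^k\sigma_3\sigma_2\sigma_1$ and peel off the full twist using Lemma \ref{iso2} and the commutation of Lemma \ref{iso1}, obtaining $(\Delta_3^2)^k(\sigma_3\sigma_2\sigma_1\sigma_1\sigma_2\sigma_3)^k\sigma_3\sigma_2\sigma_1$. The first stage removes every $\sigma_3$: the $\sigma_3\sigma_3$ pairs sitting at the block boundaries (and at the join with the trailing $\sigma_3\sigma_2\sigma_1$) are deleted by the Crossing Change Rule, $k$ of them in all, and the single surviving $\sigma_3$ is removed by Markov destabilization. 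This lands on the three–strand positive braid $(\Delta_3^2)^k(\sigma_2\sigma_1\sigma_1\sigma_2)^k\sigma_2\sigma_1$, having spent $k$ crossing changes and leaving a budget of $2k-1$.

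The remaining two stages remove the $\sigma_2$'s. I would first rewrite $(\Delta_3^2)^k=(\sigma_2\sigma_1\sigma_1\sigma_2)^k\sigma_1^{2k}$ by Lemma \ref{iso2}/\ref{iso1}, putting the word in the form $(\sigma_2\sigma_1\sigma_1\sigma_2)^{2k}\sigma_1^{2k}\sigma_2\sigma_1$. The two consecutive stages then clear the $\sigma_2$'s in two passes, arranged so that together they use precisely $2k-1$ crossing changes and leave a single $\sigma_2$; one Markov destabilization then produces $\sigma_1^{6k+3}=T(2,6k+3)$. Since $k+(2k-1)=3k-1$ equals the difference of unknotting numbers, Theorem \ref{rules} yields $T(2,6k+3)\leq_g T(4,4k+1)$.

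The delicate point, and the main obstacle, is this $\sigma_2$–removal. In the analogous index–three–to–index–two reduction the corresponding central factor is $\Delta_2^2=\sigma_1^2$, which contains no $\sigma_2$ at all, so one cancellation pass leaves exactly one $\sigma_2$ and the reduction is immediate. Here $\Delta_3^2=(\sigma_2\sigma_1)^3$ itself contributes $\sigma_2$'s, so a naive cancellation of $\sigma_2\sigma_2$ pairs produces a braid whose $\sigma_2$'s are separated by long runs of $\sigma_1$ that admit no destabilization; in fact such a cancellation lands on a different knot of the same unknotting number (for $k=1$ it gives $T(3,5)$ rather than the required $T(2,9)$). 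Landing exactly on $T(2,6k+3)$ therefore forces me to interleave the cancellations with the Neighboring Generators Rule, using $\sigma_2\sigma_1\sigma_2=\sigma_1\sigma_2\sigma_1$ to recluster and lower the $\sigma_2$ count at the right moments so that the final one–$\sigma_2$ form is reached in $2k-1$ crossing changes. This bookkeeping, which is exactly why two stages are needed rather than one, is where the substantive work of the proof lies; once it is carried out, the concluding appeal to Theorem \ref{rules} is routine.
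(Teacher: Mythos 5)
Your proposal correctly reproduces the first stage of the paper's argument: the decomposition $T(4,4k+1)=(\Delta_4^2)^k\sigma_3\sigma_2\sigma_1$, the peeling via Lemmas \ref{iso2} and \ref{iso1}, the $k$ crossing changes eliminating the $\sigma_3$'s, and the arrival at $(\Delta_3^2)^k(\sigma_2\sigma_1\sigma_1\sigma_2)^k\sigma_2\sigma_1$ all match the paper's proof exactly. Your rewriting $(\Delta_3^2)^k=(\sigma_2\sigma_1\sigma_1\sigma_2)^k\sigma_1^{2k}$ is also exactly what the paper does (via $\Delta_3^2=\sigma_2\sigma_1\sigma_1\sigma_2\sigma_1\sigma_1$), and your diagnosis that a naive cancellation of all $\sigma_2\sigma_2$ pairs lands on the wrong knot is correct and insightful: for $k=1$ the naive word $\sigma_2\sigma_1^4\sigma_2\sigma_1^2\sigma_2\sigma_1$ has determinant $1$ while $T(2,9)$ has determinant $9$, so no amount of isotopy and destabilization can rescue that route.

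But having correctly located the crux, you do not resolve it, and that is a genuine gap. The assertion that the two passes can be ``arranged so that together they use precisely $2k-1$ crossing changes and leave a single $\sigma_2$'' is a restatement of the goal, not an argument; your closing paragraph concedes as much by calling this bookkeeping ``where the substantive work of the proof lies.'' The paper's proof consists precisely of executing it: starting from $(\sigma_2\sigma_1\sigma_1\sigma_2\sigma_1\sigma_1\sigma_2\sigma_1\sigma_1\sigma_2)^k\sigma_2\sigma_1$, one cancels only the $k-1$ junction pairs of the ten-letter blocks (not the trailing pair), obtaining $\sigma_2(\sigma_1\sigma_1\sigma_2\sigma_1\sigma_1\sigma_2\sigma_1\sigma_1)^k\sigma_2\sigma_2\sigma_1$; then conjugation together with two applications of the Neighboring Generators Rule converts this to $(\sigma_1\sigma_1)^{2k+1}(\sigma_2\sigma_1\sigma_1\sigma_2)^k\sigma_2\sigma_1$; finally the remaining $k$ adjacent $\sigma_2\sigma_2$ pairs are cancelled and the lone $\sigma_2$ destabilized, yielding $\sigma_1^{6k+3}$. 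Without exhibiting such a sequence (or a verifiable induction replacing it), the proof is an outline. A secondary remark: your emphasis on hitting the budget of $3k-1$ crossing changes misplaces the difficulty --- by Lemma \ref{one} every crossing change along a five-rules path lowers the unknotting number by exactly one, so \emph{any} five-rules path from $(\sigma_3\sigma_2\sigma_1)^{4k+1}$ to $\sigma_1^{6k+3}$ automatically uses exactly $3k-1$ of them, and Theorem \ref{rules} makes the count check unnecessary. The only thing that must be verified is that the terminal word really is $\sigma_1^{6k+3}$, which is exactly the step you left open.
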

\begin{proof}
By definition, we have
\begin{align*}
&T(4,4k+1) \\&=(\sigma_3\sigma_2\sigma_1)^{4k+1} \\&=(\Delta_4^2)^k\sigma_3\sigma_2\sigma_1 \\&=(\Delta_3^2\sigma_3\sigma_2\sigma_1\sigma_1\sigma_2\sigma_3)^k\sigma_3\sigma_2\sigma_1 \\&=(\Delta_3^2)^k(\sigma_3\sigma_2\sigma_1\sigma_1\sigma_2\sigma_3)^k\sigma_3\sigma_2\sigma_1 \\&\rightarrow (\Delta_3^2)^k\sigma_3(\sigma_2\sigma_1\sigma_1\sigma_2)^k\sigma_2\sigma_1\text{ (applying $k$ $CCs$)} \\&=(\Delta_3^2)^k(\sigma_2\sigma_1\sigma_1\sigma_2)^k\sigma_2\sigma_1 \\&=(\sigma_2\sigma_1\sigma_2\sigma_1\sigma_2\sigma_1)^k(\sigma_2\sigma_1\sigma_1\sigma_2)^k\sigma_2\sigma_1 \\&=(\sigma_2\sigma_1\sigma_1\sigma_2\sigma_1\sigma_1)^k(\sigma_2\sigma_1\sigma_1\sigma_2)^k\sigma_2\sigma_1 \\&=(\sigma_2\sigma_1\sigma_1\sigma_2\sigma_1\sigma_1\sigma_2\sigma_1\sigma_1\sigma_2)^k\sigma_2\sigma_1 \\&\rightarrow \sigma_2(\sigma_1\sigma_1\sigma_2\sigma_1\sigma_1\sigma_2\sigma_1\sigma_1)^k\sigma_2\sigma_2\sigma_1\text{ (applying $k-1$ $CCs$)}
\\&=(\sigma_1\sigma_1\sigma_2\sigma_1\sigma_1\sigma_2\sigma_1\sigma_1)^k\sigma_1\sigma_2\sigma_1\sigma_1 \\&=(\sigma_1\sigma_1)^k(\sigma_2\sigma_1\sigma_1\sigma_2)^k(\sigma_1\sigma_1)^k\sigma_1\sigma_2\sigma_1\sigma_1 \\&=(\sigma_1\sigma_1)^{2k+1}(\sigma_2\sigma_1\sigma_1\sigma_2)^k\sigma_2\sigma_1 \\&\rightarrow (\sigma_1\sigma_1)^{2k+1}\sigma_2(\sigma_1\sigma_1)^k\sigma_1\text{ ($k$ $CCs$)}
\\&=(\sigma_1)^{2(2k+1)+2(k)+1} \\&=(\sigma_1)^{6k+3} \\&=T(2,6k+3).
\end{align*}
Note that $u(T(4,4k+1))-u(T(2,6k+3)) = 6k-(3k+1) = 3k-1$, which is exactly how many crossing changes we have used. Therefore, the torus knot $T(2,6k+3) \leq_g T(4,4k+1)$.
\end{proof}
\begin{lem}
\label{2b}
If $k$ is a positive integer, then  $T(2,6k+5) \leq_g T(4,4k+3)$.
\end{lem}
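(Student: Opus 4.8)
The plan is to follow the template of Lemma \ref{2a} almost verbatim, replacing the residue $\sigma_3\sigma_2\sigma_1$ of $T(4,4k+1)$ by the residue $(\sigma_3\sigma_2\sigma_1)^3$ of $T(4,4k+3)$ and absorbing the larger residue exactly as in Lemma \ref{d}. First I record the target count: by the formula $u(T(p,q))=\frac{(p-1)(q-1)}{2}$ we have $u(T(4,4k+3))=\frac{3(4k+2)}{2}=6k+3$ and $u(T(2,6k+5))=\frac{6k+4}{2}=3k+2$, so the two unknotting numbers differ by $3k+1$. By Theorem \ref{rules} it therefore suffices to turn the braid word $(\sigma_3\sigma_2\sigma_1)^{4k+3}$ into $(\sigma_1)^{6k+5}$ using only the five rules and to check that the Crossing Change Rule is invoked exactly $3k+1$ times; the Gordian adjacency $T(2,6k+5)\leq_g T(4,4k+3)$ is then automatic, with no direct Gordian-distance estimate required.

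For the first phase I factor $(\sigma_3\sigma_2\sigma_1)^{4k+3}=(\Delta_4^2)^k(\sigma_3\sigma_2\sigma_1)^3$ and eliminate every $\sigma_3$, mirroring the opening of Lemma \ref{d} with $k$ in place of $2k+1$. Concretely, I expand each $\Delta_4^2$ as $\sigma_3\sigma_2\sigma_1\sigma_1\sigma_2\sigma_3\,\Delta_3^2$ by Lemma \ref{iso2}, collect the central factor $(\Delta_3^2)^k$ at the front by Lemma \ref{iso1}, rewrite the residue $(\sigma_3\sigma_2\sigma_1)^3$ as $\sigma_1\sigma_2\sigma_3\,\Delta_3^2$ so that its trailing full twist merges the accumulated powers into $(\Delta_3^2)^{k+1}$, cancel the $\sigma_3\sigma_3$ pairs arising at consecutive block boundaries via the Crossing Change Rule, and finally delete the single surviving $\sigma_3$ by Markov destabilization. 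This spends exactly $k$ crossing changes and lands on three strands at the word $(\Delta_3^2)^{k+1}\sigma_1\sigma_2(\sigma_2\sigma_1\sigma_1\sigma_2)^k$, whose unknotting number $\frac{(10k+8)-3+1}{2}=5k+3$ correctly records the $k$ changes already made.

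In the second phase I remove the $\sigma_2$'s in the two consecutive stages advertised before Lemma \ref{2a}. I rewrite the central twist as $\Delta_3^2=\sigma_2\sigma_1\sigma_1\sigma_2\sigma_1\sigma_1$ through the Neighboring Generators Rule and merge one copy into the blocks $\sigma_2\sigma_1\sigma_1\sigma_2$; this is legitimate because $\Delta_3^2$ is central and $\sigma_1\sigma_1$ commutes with $\sigma_2\sigma_1\sigma_1\sigma_2$ (both products equal $\Delta_3^2$), so powers factor exactly as in Lemma \ref{2a}. A first pass of the Crossing Change Rule cancels the $\sigma_2\sigma_2$ pairs created at consecutive block boundaries, with the freed $\sigma_1$'s shuffled around the closure by the Conjugation Rule; a second pass cancels the remaining $\sigma_2\sigma_2$ pairs, leaving a word of the shape $(\sigma_1\sigma_1)^{a}\sigma_2(\sigma_1\sigma_1)^{b}\sigma_1$ carrying a single $\sigma_2$, which Markov destabilization deletes to reach two strands. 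The two passes together must spend $2k+1$ crossing changes, since the unknotting number has to fall from $5k+3$ to $3k+2$, and the extra length carried by $(\sigma_3\sigma_2\sigma_1)^3$ is precisely what forces the surviving exponent to be $6k+5$ rather than the $6k+3$ of Lemma \ref{2a}.

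The main obstacle is entirely bookkeeping rather than conceptual: I must choose the two $\sigma_2$-elimination passes so that they spend exactly $2k+1$ crossing changes (so the grand total is $k+(2k+1)=3k+1$, matching the unknotting-number difference computed above) while threading the Neighboring Generators and Conjugation Rules so that every exponent telescopes cleanly to $(\sigma_1)^{6k+5}$. I expect the single extra $\sigma_2\sigma_1\sigma_1\sigma_2$ block relative to Lemma \ref{2a} to raise each stage's count by one, yielding the two additional crossing changes; once the explicit rule-sequence is written out and this tally is confirmed, Theorem \ref{rules} delivers the claim with no further argument.
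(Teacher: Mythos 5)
Your proposal is correct and follows essentially the same route as the paper's proof: your first phase reproduces the paper's $\sigma_3$-elimination exactly, landing on the identical intermediate word $(\Delta_3^2)^{k+1}\sigma_1\sigma_2(\sigma_2\sigma_1\sigma_1\sigma_2)^k$ after $k$ crossing changes, and your two-stage $\sigma_2$-elimination (rewriting $\Delta_3^2=\sigma_2\sigma_1\sigma_1\sigma_2\sigma_1\sigma_1$, cancelling boundary pairs while conjugating $\sigma_1$'s around the closure, and destabilizing the last $\sigma_2$) is precisely the paper's second phase, with a total of $2k+1$ further crossing changes. Your observation that Theorem \ref{rules} makes the crossing-change tally automatic is also sound -- the count $3k+1$ is forced by the length bookkeeping -- with only the trivial caveat that the paper distributes the $\sigma_2$-stage changes as $1+k+k$ rather than raising each of the two passes of Lemma \ref{2a} by one.
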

\begin{proof}
By definition, we have 
\begin{align*}
&T(4,4k+3) \\&=(\sigma_3\sigma_2\sigma_1)^{4k+3} \\&=(\Delta_4^2)^k\sigma_3\sigma_2\sigma_1\sigma_3\sigma_2\sigma_1\sigma_3\sigma_2\sigma_1 \\&=(\Delta_4^2)^k\sigma_3\sigma_2\sigma_1\sigma_2\sigma_3\sigma_2\sigma_1\sigma_2\sigma_1 \\&=(\Delta_3^2\sigma_3\sigma_2\sigma_1\sigma_1\sigma_2\sigma_3)^k\sigma_1\sigma_2\sigma_3\sigma_2\sigma_1\sigma_2\sigma_1\sigma_2\sigma_1  
\\&=(\Delta_3^2)^{k+1}(\sigma_3\sigma_2\sigma_1\sigma_1\sigma_2\sigma_3)^k\sigma_1\sigma_2\sigma_3 \\&=(\Delta_3^2)^{k+1}\sigma_1\sigma_2(\sigma_3\sigma_2\sigma_1\sigma_1\sigma_2\sigma_3)^k\sigma_3 \\&\rightarrow (\Delta_3^2)^{k+1}\sigma_1\sigma_2\sigma_3(\sigma_2\sigma_1\sigma_1\sigma_2)^k\text{ (applying $k$ $CCs$)} \\&=(\Delta_3^2)^{k+1}\sigma_1\sigma_2(\sigma_2\sigma_1\sigma_1\sigma_2)^k \\&=(\sigma_2\sigma_1\sigma_2\sigma_1\sigma_2\sigma_1)^{k+1}\sigma_1\sigma_2(\sigma_2\sigma_1\sigma_1\sigma_2)^k \\&=(\sigma_2\sigma_1\sigma_1\sigma_2\sigma_1\sigma_1)^{k+1}\sigma_1\sigma_2(\sigma_2\sigma_1\sigma_1\sigma_2)^k \\&=(\sigma_2\sigma_1\sigma_1\sigma_2\sigma_1\sigma_1)^{k}\sigma_2\sigma_1\sigma_1\sigma_2\sigma_1\sigma_1\sigma_1\sigma_2(\sigma_2\sigma_1\sigma_1\sigma_2)^k \\&=(\sigma_2\sigma_1\sigma_1\sigma_2\sigma_1\sigma_1)^{k}\sigma_1\sigma_1\sigma_1\sigma_2\sigma_1\sigma_1\sigma_2\sigma_2(\sigma_2\sigma_1\sigma_1\sigma_2)^k \\&\rightarrow(\sigma_2\sigma_1\sigma_1\sigma_2\sigma_1\sigma_1)^{k}\sigma_1\sigma_1\sigma_1\sigma_2\sigma_1\sigma_1(\sigma_2\sigma_1\sigma_1\sigma_2)^k\text{ (applying one $CC$)} \\&=(\sigma_2\sigma_1\sigma_1\sigma_2\sigma_1\sigma_1\sigma_2\sigma_1\sigma_1\sigma_2)^k\sigma_2(\sigma_1)^5 \\&\rightarrow \sigma_2(\sigma_1\sigma_1\sigma_2\sigma_1\sigma_1\sigma_2\sigma_1\sigma_1)^k(\sigma_1)^5\text{ (applying $k$ $CCs$)} \\&=\sigma_2(\sigma_2\sigma_1\sigma_1\sigma_2)^k(\sigma_1\sigma_1\sigma_1\sigma_1)^k(\sigma_1)^5 \\&\rightarrow (\sigma_1\sigma_1)^k\sigma_2(\sigma_1\sigma_1\sigma_1\sigma_1)^k(\sigma_1)^5\text{ (applying $k$ $CCs$)} \\&=(\sigma_1\sigma_1)^k(\sigma_1\sigma_1\sigma_1\sigma_1)^k(\sigma_1)^5 \\&=(\sigma_1)^{2(k)+4(k)+5} \\&=(\sigma_1)^{6k+5} \\&=T(2,6k+5).
\end{align*}
Note that $u(T(4,4k+3))-u(T(2,6k+5)) = 6k+3-(3k+2) = 3k+1$, which is exactly how many crossing changes we have used. Therefore, the torus knot $T(2,6k+5) \leq_g T(4,4k+3)$.
\end{proof}
\begin{thm}
\label{twofou}
Let $a$ and $b$ be odd positive integers. If $a\le\frac{3b+3}{2}$, then  $T(2,a) \leq_g T(4,b)$.
\end{thm}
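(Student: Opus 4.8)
The plan is to combine Lemmas \ref{2a} and \ref{2b}, which handle the two residue classes of $b$ modulo $4$, in exactly the same manner as the proof of Theorem \ref{thrfou} used its four lemmas. Since $a$ and $b$ are odd, $b$ is automatically coprime to $4$, and $b \equiv 1 \pmod 4$ or $b \equiv 3 \pmod 4$, giving precisely the two cases covered by the lemmas. The substantive content lives entirely in those two computations; the theorem itself is a packaging of them together with a monotonicity argument, so I expect no serious obstacle beyond careful parity bookkeeping.

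First I would dispose of the two boundary adjacencies. In the case $b = 4k+1$, substituting $k = \frac{b-1}{4}$ into Lemma \ref{2a} gives $T(2,6k+3) \leq_g T(4,b)$, and a direct calculation shows $6k+3 = \frac{3b+3}{2}$, which is odd; thus the bound $\frac{3b+3}{2}$ is attained exactly. In the case $b = 4k+3$, substituting $k = \frac{b-3}{4}$ into Lemma \ref{2b} gives $T(2,6k+5) \leq_g T(4,b)$, where $6k+5 = \frac{3b+1}{2}$. The point requiring care here is that $\frac{3b+3}{2} = 6k+6$ is \emph{even} in this case, so $T\!\left(2,\frac{3b+3}{2}\right)$ would be a two-component link rather than a knot; consequently the relevant sharp target is the largest \emph{odd} integer not exceeding $\frac{3b+3}{2}$, namely $6k+5$, which is exactly what the lemma produces. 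In both cases, then, the lemma attains $T(2,a_0)\leq_g T(4,b)$ where $a_0$ is the largest odd integer with $a_0 \le \frac{3b+3}{2}$.

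Finally I would extend from $a_0$ to every admissible $a$. Given any odd $a$ with $a \le \frac{3b+3}{2}$, we have $a \le a_0$, so Feller's Theorem 2 (Theorem \ref{fel2}), applied with the coprime pairs $(2,a)$ and $(2,a_0)$ satisfying $2\le 2$ and $a \le a_0$, yields $T(2,a) \leq_g T(2,a_0)$. Gordian adjacency is transitive via the unknotting-sequence characterization: if $K_1$ lies on an unknotting sequence of $K_2$ and $K_2$ lies on an unknotting sequence of $K_3$, then splicing the two sequences at $K_2$ exhibits $K_1$ on an unknotting sequence of $K_3$ (equivalently, the triangle inequality for $d_g$ forces $d_g(K_1,K_3)=u(K_3)-u(K_1)$). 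Chaining $T(2,a) \leq_g T(2,a_0) \leq_g T(4,b)$ therefore gives $T(2,a) \leq_g T(4,b)$, completing the argument.

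The only genuinely delicate step is the parity check in the second case—verifying that the even endpoint $\frac{3b+3}{2}=6k+6$ is excluded because it corresponds to a link, and that $6k+5$ is the sharp odd bound produced by Lemma \ref{2b}. Everything else is routine substitution and the invocation of Theorem \ref{fel2} together with transitivity of $\leq_g$.
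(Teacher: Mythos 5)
Your proposal follows essentially the same route as the paper's own proof: apply Lemma \ref{2a} when $b\equiv 1 \pmod 4$ and Lemma \ref{2b} when $b\equiv 3 \pmod 4$, observe that in the second case $\frac{3b+3}{2}$ is even (hence a two-component link, not a knot) so the sharp odd target is $\frac{3b+1}{2}$, and then descend to all smaller odd $a$ via Feller's Theorem 2 (Theorem \ref{fel2}). Your explicit spelling-out of the transitivity of $\leq_g$ via spliced unknotting sequences is a detail the paper leaves implicit, but it is the same argument.
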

\begin{proof}
This proof follows the same structure as the proof of Theorem \ref{thrfou}. For Lemma \ref{2a}, $b=4k+1$, so $$T(3,6k+3)=T\left(3,6\left(\frac{b-1}{4}\right)+3\right)=T\left(3,\frac{3b+3}{2}\right)\le_g T(4,b).$$ For Lemma \ref{2b}, $b=4k+3$, so $$T(3,6k+5)=T\left(3,6\left(\frac{b-3}{4}\right)+5\right)=T\left(3,\frac{3b+1}{2}\right)\le_g T(4,b).$$ Note that in the second case, $\frac{3b+3}{2}$ is even, so as before we only need to show the number one lower. By Feller's Theorem 2 (Theorem \ref{fel2}), we have that the torus knot $T(2,a) \leq_g T(4,b)$ if $a\le\frac{3b+3}{2}$.
\end{proof}
While we conjecture that the converse of Theorem \ref{thrfou} is true, we know that the converse of Theorem \ref{twofou} is not: we have found that $T(2,13) \leq_g T(4,7)$.

So far, all of the Gordian adjacencies presented have been between torus knots. We next consider which positive braid $w$ we can concatenate another positive braid $\beta'$ in $B_n$ with. In doing so, we want the concatenation to result in a longer positive braid $\beta$ in $B_n$ whose closure is Gordian adjacent to the closure of $\beta'$. Note that $w$ must always have even length for both $\hat\beta$ and $\hat\beta'$ to be knots, as the unknotting formula $\frac{\ell-n+1}{2}$ must result in an integer. If $w$ satisfies the following properties, Gordian adjacency can be guaranteed.
\begin{thm}\label{4.5}
\label{w}
Let $\beta$ in $B_n$ be a positive braid where $\beta=\beta'w$ and $\hat\beta$ and $\hat\beta'$ are knots. If $\hat w$ is a link with $n$ components, then $\hat\beta' \leq_g \hat\beta$.
\end{thm}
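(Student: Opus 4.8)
The plan is to invoke Theorem \ref{rules}: since both $\hat\beta$ and $\hat\beta'$ are knots, it suffices to transform $\beta=\beta'w$ into $\beta'$ using only the five rules. All the work therefore goes into deleting the subword $w$ while leaving $\beta'$ untouched.

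First I would translate the hypothesis on $\hat w$ into algebra. The number of components of the closure of a braid in $B_n$ equals the number of cycles of its underlying permutation; since $\hat w$ attains the maximal possible number of components, namely $n$, the permutation of $w$ is the identity, i.e.\ $w$ is a pure braid. The relevant feature of purity is that it is preserved by the three rules used in Lemma \ref{AA}: the Distant and Neighboring Generators Rules are identities in $S_n$, and the Crossing Change Rule deletes a pair $\sigma_i\sigma_i$, whose image $(i\,i{+}1)^2$ is trivial. Thus every word produced while operating locally on $w$ is again a pure braid.

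Next I would run the local algorithm of Lemma \ref{AA} on the subword $w$; crucially, as the remark following that lemma notes, this algorithm uses neither Markov destabilization nor the Conjugation Rule, so it can be applied to $w$ inside $\beta'w$ while still following the five rules on the whole word (this is why Lemma \ref{two}, which relies on Markov destabilization, cannot be applied to $w$ directly). The algorithm leaves at most one copy of the highest-index generator $\sigma_m$ occurring in $w$. The key step is to rule out the case of exactly one copy: if $w=u\,\sigma_m\,v$ with $u,v$ supported on $\sigma_1,\dots,\sigma_{m-1}$ (hence fixing the strand in position $m+1$), then reading top to bottom the strand at $m+1$ passes through $u$ unchanged, is sent to position $m$ by $\sigma_m$, and is then permuted by $v$ among positions $1,\dots,m$; so position $m+1$ is not fixed, contradicting that $w$ is pure. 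Hence $\sigma_m$ must be eliminated completely, and the highest index occurring in $w$ strictly decreases.

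Iterating this reduction drives $w$ to the empty word, so $\beta'w$ becomes $\beta'$ using only the five rules, and Theorem \ref{rules} yields $\hat\beta'\leq_g\hat\beta$. I expect the main obstacle to be exactly the elimination of the final top generator at each level: without the purity hypothesis, Lemma \ref{AA} could stall with a single leftover $\sigma_m$ that local moves cannot remove, and it is precisely the $n$-component condition on $\hat w$ that forces this leftover to vanish. A secondary point to verify is the bookkeeping of the strand count—since the reduction never destabilizes, the whole word stays on $n$ strands throughout, so $\beta'$ is recovered on its original $n$ strands.
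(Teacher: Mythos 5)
Your proposal is correct and follows essentially the same route as the paper's proof: apply Lemma \ref{AA} locally to $w$ (using neither conjugation nor destabilization, so the five rules are respected on the whole word), use the $n$-component hypothesis to rule out a single leftover top generator, iterate until $w$ is empty, and invoke Theorem \ref{rules}. The one difference is cosmetic: where you recast the $n$-component condition as purity of $w$ and exclude the leftover $\sigma_m$ by tracking the strand in position $m+1$, the paper keeps the component count of $\hat w$ as the preserved invariant and excludes the leftover generator by observing that Markov destabilization would express $\hat w'$ on $n-1$ strands, which cannot yield $n$ components --- two phrasings of the same fact, since the components of $\hat w$ correspond to the cycles of the underlying permutation of $w$.
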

\begin{proof}
By Lemma \ref{AA}, we can use the five rules to make $\beta'w$ into a new braid word $\beta'w'$ where $w'$ has no more than one $\sigma_{n-1}$. As none of the five rules change the number of components in the braid closure of the subword we perform the rules on, $\hat w'$ must have $n$ components. If $w'$ had only one $\sigma_{n-1}$, we could delete the $\sigma_{n-1}$ using the Markov Destabilization Rule and express $w'$ on $n-1$ strands; since $\hat w'$ has $n$ components, this cannot be done. Therefore, $w'$ has no $\sigma_{n-1}$. Using Lemma \ref{AA} on $\beta'w'$ again leads to a $w''$ with no $\sigma_{n-2}$. By repeating this process $n$ times, we can turn $\beta'w$ into $\beta'$ using the five rules. Since there is a sequence using the five rules that goes from $\beta$ to $\beta'$, $\hat\beta' \leq_g \hat\beta$.
\end{proof}
Theorem \ref{4.5} tells us that we can add a braid word whose closure is an $n$-component link (for instance, $\Delta_n^2$) to any positive braid $\beta$ in $B_n$ to get a new knot $K$ such that that $\hat\beta \leq_g K$. In the other direction, deleting any subword whose closure is an $n$-component link preserves Gordian adjacency to the original braid word. This theorem also provides an alternate proof for a weakened version of Feller's Theorem 2 (Theroem \ref{fel2}), as it shows $T(a,c) \leq_g T(a,d)$ if $c=d-ka$ for some positive integer $k$.

\section{Positive Paths}
The concept of Gordian adjacency is closely tied to the concept of unknotting sequences, as a knot $K_1 \leq_g K_2$ if and only if $K_1$ is contained in an unknotting sequence of $K_2$ \cite{Feller}. In finding Gordian adjacencies between positive braid knots, we are really describing unknotting sequences that contain positive braid knots. The five rules give us further tools to explore these unknotting sequences.
\begin{defn}
A positive path from a positive braid knot $\hat\beta_2$ to a positive braid knot $\hat\beta_1$ is a subsequence of an unknotting sequence of $\hat\beta_2$, $\hat\beta_2 \to \hat\alpha_1 \to \ldots \to \hat\alpha_n \to \hat\beta_1$, such that each $\hat\alpha_i$ is a positive braid knot.
\end{defn}
Note that every Gordian adjacency we show in this paper is shown through a positive path.
\begin{thm}
\label{posunk}
Every positive braid knot has an unknotting sequence that is a positive path.
\end{thm}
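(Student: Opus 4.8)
The plan is to read off the desired positive path directly from the reduction machinery built in Section 3, with only one additional observation. By Lemma \ref{two}, the positive braid word $\beta$ representing a given positive braid knot $\hat\beta$ can be transformed into the empty braid word through a finite sequence of applications of the five rules. By Lemma \ref{one}, recording the braid closures at each stage of this process yields an unknotting sequence of $\hat\beta$ that terminates at the unknot. The only thing that remains is to verify that \emph{every} knot appearing in this sequence is itself a positive braid knot; once that is established, the sequence is a positive path by definition, and the theorem follows.

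The additional observation I would make is that all five rules preserve positivity of the braid word. The Distant Generators Rule and the Neighboring Generators Rule are braid relations that neither introduce nor eliminate inverse generators; the Conjugation Rule merely cyclically permutes the letters of the word; and the Markov Destabilization Rule deletes a single positive generator $\sigma_n$. The Crossing Change Rule deletes a consecutive pair $\sigma_i\sigma_i$ of positive generators. In each case a positive braid word is sent to a positive braid word, so every intermediate braid word arising in the reduction of $\beta$ to the empty word is positive, and its closure is, by definition, a positive braid knot.

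To assemble these facts, I would recall from the proof of Lemma \ref{one} that none of the five rules changes the number of components of the closure; starting from the knot $\hat\beta$, every intermediate closure therefore remains a one-component link, i.e. a genuine knot. Combining the three ingredients—that the recorded sequence is an unknotting sequence (Lemma \ref{one}), that every intermediate braid word is positive (the observation above), and that every intermediate closure has a single component—shows that the unknotting sequence consists entirely of positive braid knots, and hence is a positive path, with the final unknot realized as the closure of the empty positive braid word. I do not expect any genuine obstacle here: the whole content of the argument is the positivity-preservation check, which is immediate from the descriptions of the five rules, so the result is essentially a corollary of Lemmas \ref{one} and \ref{two}.
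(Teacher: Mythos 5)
Your proposal is correct and is essentially the paper's own argument: combine Lemmas \ref{one} and \ref{two} to get an unknotting sequence via the five rules, and note that each rule sends a positive braid word to a positive braid word, so every intermediate knot is a positive braid knot. Your rule-by-rule positivity check and the one-component observation merely make explicit what the paper states in a single sentence.
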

\begin{proof}
The combination of Lemmas \ref{one} and \ref{two} tells us that every positive braid word that represents a knot can be unknotted using the five rules, and that this procedure forms an unknotting sequence. As the application of any one of our rules to any positive braid results in a positive braid, this unknotting sequence contains only positive braid knots.
\end{proof}

The next question we consider is how many unknotting sequences of positive braid knots are positive paths. Since every knot with an unknotting number greater than one has infinetly many unknotting sequences \cite{KnotBook}, we naturally ask ourselves whether there are an infinite number of unknotting sequences of a positive braid knot $\hat\beta$ that are positive paths. Our next theorem implies that any positive braid knot has only finitely many positive paths.

\begin{lem}
\label{gen1}
If $\beta$ is a positive braid in $B_n$ whose closure is a knot, then each generator $\sigma_i$ for $1 \leq i \leq n-1$ must appear at least once in $\beta$.
\end{lem}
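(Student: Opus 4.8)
The plan is to argue by contradiction: suppose that for some index $i$ with $1\le i\le n-1$, the generator $\sigma_i$ does not appear anywhere in the positive braid word $\beta$. The key structural observation is that $\sigma_i$ is the \emph{only} generator of $B_n$ whose crossing connects the block of strand positions $\{1,\dots,i\}$ with the block $\{i+1,\dots,n\}$: every generator $\sigma_j$ with $j<i$ crosses strands lying within the first block, and every $\sigma_j$ with $j>i$ crosses strands lying within the second block. Hence if no $\sigma_i$ occurs, no strand ever passes from one block to the other as we read $\beta$ from top to bottom.

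First I would make this precise using the underlying permutation $\pi$ of $\beta$, the permutation of $\{1,\dots,n\}$ sending the top endpoint of each strand to its bottom endpoint. Since each generator appearing in $\beta$ is an adjacent transposition that lives entirely inside one of the two blocks, their composite $\pi$ fixes the partition, that is, $\pi(\{1,\dots,i\})=\{1,\dots,i\}$ and $\pi(\{i+1,\dots,n\})=\{i+1,\dots,n\}$. Consequently every cycle of $\pi$ is contained in a single block, so $\pi$ has at least one cycle meeting $\{1,\dots,i\}$ and at least one cycle meeting $\{i+1,\dots,n\}$, giving it at least two disjoint cycles.

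Next I would invoke the standard fact that the number of link components of a braid closure $\hat\beta$ equals the number of cycles of its underlying permutation $\pi$, since the closure arcs identify the top and bottom endpoints in each strand position, and each cycle of $\pi$ closes up into exactly one connected component. With $\pi$ having at least two cycles, $\hat\beta$ is a link with at least two components. This contradicts the hypothesis that $\hat\beta$ is a knot, so no generator $\sigma_i$ can be missing, establishing the lemma.

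The main obstacle, and the only place requiring care, is the bookkeeping that ties the absence of $\sigma_i$ to the block-preserving property of $\pi$ and then to the component count; I would state explicitly that a positive (indeed any) braid word built only from generators $\sigma_j$ with $j\ne i$ cannot move a strand across the $i$th gap, and that the closure operation respects the cycle decomposition of $\pi$. If I preferred an even more elementary formulation avoiding permutations, I would instead note directly that the strands occupying positions $1,\dots,i$ form a closed sublink disjoint from the closed sublink on positions $i+1,\dots,n$, exhibiting $\hat\beta$ as a split link and hence not a knot; both routes reach the same contradiction.
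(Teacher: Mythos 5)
Your proof is correct, but it takes a different route from the paper's. The paper never mentions the underlying permutation: it uses the Distant Generators Rule to sort the word into $\beta=\beta_1\beta_2$, where every generator in $\beta_1$ has index less than $i$ and every generator in $\beta_2$ has index greater than $i$ (possible since any two such generators have indices differing by at least two, hence commute), and then observes pictorially, with an accompanying figure, that the closure visibly splits into the two components $\hat\beta_1$ and $\hat\beta_2$, so $\hat\beta$ is a link rather than a knot. Your main argument instead tracks the induced permutation $\pi$ of $\{1,\dots,n\}$: since every generator other than $\sigma_i$ is an adjacent transposition supported inside one of the blocks $\{1,\dots,i\}$ or $\{i+1,\dots,n\}$, the permutation preserves both (nonempty) blocks, so it has at least two cycles, and the standard fact that the components of $\hat\beta$ correspond to the cycles of $\pi$ finishes the argument. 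Both are sound; your permutation argument is more algebraic and self-contained, replacing the paper's figure and word-rearrangement with an explicit component count, and as you note it makes clear that positivity is irrelevant to the lemma. The paper's version buys consistency with its broader framework --- it stays inside the five rules used throughout and exhibits the split-link structure concretely, which is the picture the subsequent Lemma on a single occurrence of $\sigma_i$ builds on. Note also that your closing ``more elementary formulation'' (closing up the two blocks of strands into disjoint sublinks) is essentially the paper's proof, so you have in effect given both arguments.
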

\begin{proof}
Assume $\beta$ is a positive braid in $B_n$ that has no $\sigma_i$ for some $1 \leq i \leq n-1$. Note that $\beta=\beta_1\beta_2$ for positive braids $\beta_1$ and $\beta_2$, where $j<i$ for all $\sigma_j$ in $\beta_1$ and $j>i$ for all $\sigma_j$ in $\beta_2$. We know this can be done via the Distant Generators Rule because any generator that ends up in $\beta_2$ has subscript at least two greater than the subscript of any generator that ends up in $\beta_1$. As shown by the figure below, we see that $\hat\beta$ must be a link, as it has at least two components $\hat\beta_1$ and $\hat\beta_2$. By the contrapositive, if $\hat\beta$ is a knot, then there are no $\sigma_i$ that $\beta$ does not contain.
\end{proof}

\begin{figure}[H]
\begin{center}
	\includegraphics[scale=0.20]{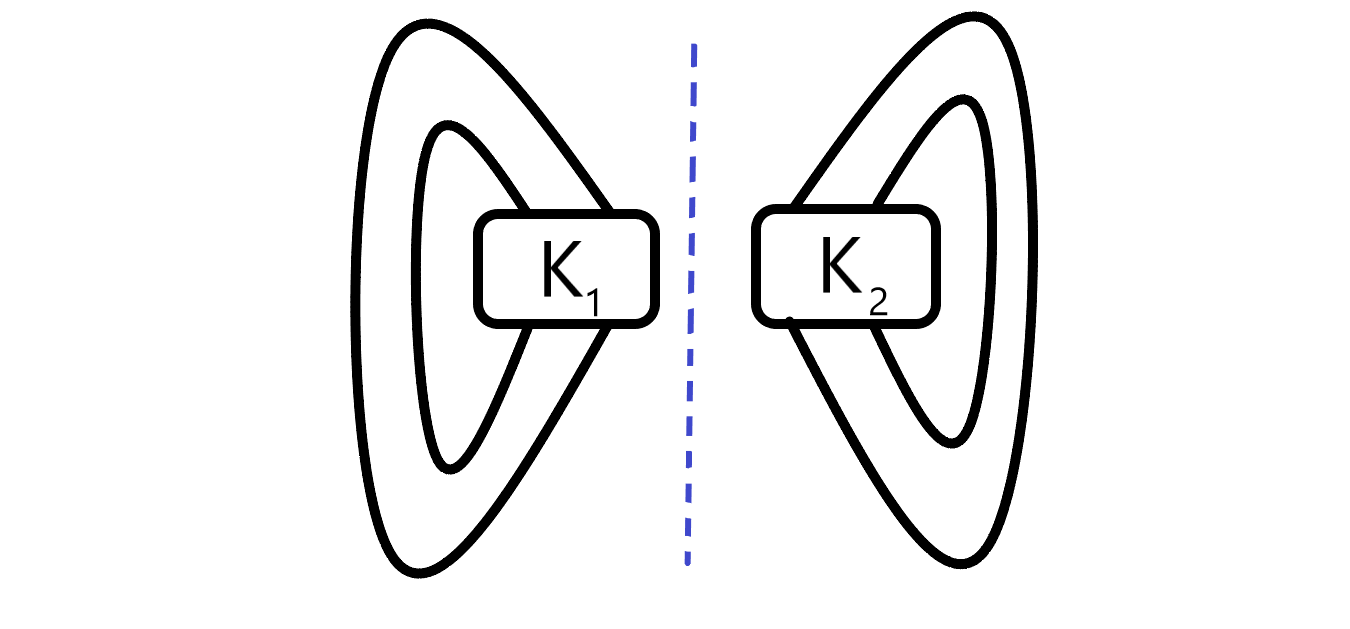}
\end{center}
\caption{A Braid with No $\sigma_i$}
\end{figure}

\begin{lem}
\label{gen2}
Let $\hat\beta$ be a positive braid knot such that $\beta$ is in $B_n$. If there exists $1 \le i \le n-1$ such that $\sigma_i$ appears exactly once in $\beta$, then $\hat\beta$ can be expressed as the closure of a positive braid $\beta'$ in $B_{n-1}$.
\end{lem}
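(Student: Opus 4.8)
The plan is to exhibit $\hat\beta$ as the closure of a positive braid on $n-1$ strands by recognizing the lone $\sigma_i$ as the single crossing that fuses two otherwise-independent braid blocks into a connected sum.

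First I would use the Conjugation Rule to cyclically rotate the unique $\sigma_i$ to the end of the word, writing $\hat\beta = \widehat{\gamma\sigma_i}$ where $\gamma$ contains no $\sigma_i$. Every generator of $\gamma$ is then either a ``low'' generator $\sigma_a$ with $a\le i-1$ or a ``high'' generator $\sigma_b$ with $b\ge i+1$; since any such pair satisfies $|a-b|\ge 2$, the Distant Generators Rule lets me sort the word into $\gamma=\gamma_L\gamma_H$, where $\gamma_L\in\langle\sigma_1,\dots,\sigma_{i-1}\rangle$ braids strands $1,\dots,i$ and $\gamma_H\in\langle\sigma_{i+1},\dots,\sigma_{n-1}\rangle$ braids strands $i+1,\dots,n$, exactly as in the proof of Lemma \ref{gen1}. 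A quick look at the underlying permutation shows that, since $\hat\beta$ is a knot, the permutation of $\gamma_L$ must be an $i$-cycle on $\{1,\dots,i\}$ and that of $\gamma_H$ an $(n-i)$-cycle on $\{i+1,\dots,n\}$: the two blocks contribute disjoint cycles, and the single transposition coming from $\sigma_i$ merges them into one $n$-cycle only when each block is already a single cycle. Consequently $\hat{\gamma_L}$ (on $i$ strands) and $\hat{\gamma_H}$ (on $n-i$ strands) are themselves knots.

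The key step is to identify $\hat\beta=\widehat{\gamma_L\gamma_H\sigma_i}$ with the connected sum $\hat{\gamma_L}\#\hat{\gamma_H}$. Geometrically, the blocks $\gamma_L$ and $\gamma_H$ live over disjoint columns and interact only at the one $\sigma_i$ crossing, so I can route the closure arcs of the left block entirely to the left and those of the right block entirely to the right. Then an embedded $2$-sphere separating the left column from the right meets $\hat\beta$ transversally in exactly the two points where strands $i$ and $i+1$ pass through the lone crossing; capping off the two resulting arcs recovers $\hat{\gamma_L}$ on one side and $\hat{\gamma_H}$ on the other, which is precisely a connected-sum decomposition.

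Finally I would realize this connected sum on $n-1$ strands. Re-indexing the high block downward by one --- replacing each $\sigma_b$ in $\gamma_H$ by $\sigma_{b-1}$ to obtain a positive word $\gamma_H''$ on strands $i,i+1,\dots,n-1$ that overlaps the low block in the single strand $i$ --- yields the positive braid $\beta'=\gamma_L\gamma_H''\in B_{n-1}$. Its closure is the standard overlapping-strand model of $\hat{\gamma_L}\#\hat{\gamma_H}$, so $\hat{\beta'}=\hat\beta$, and $\beta'$ is visibly positive. (At the extremes $i=1$ or $i=n-1$ one block is empty and the argument degenerates to a single Markov destabilization.) I expect the main obstacle to be making the connected-sum identification fully rigorous --- in particular, verifying that the closure arcs can be disentangled so that the lone $\sigma_i$ really does serve as the connect-sum band and that the separating sphere meets the knot in only two points; once this picture is justified, the reduction to $n-1$ strands and the positivity of $\beta'$ are immediate.
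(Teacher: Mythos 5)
Your proof is correct, and it reaches exactly the same normal form as the paper --- after conjugating the lone $\sigma_i$ out of the way and sorting the remaining letters with the Distant Generators Rule into a low block $\gamma_L$ (subscripts $\le i-1$) and a high block $\gamma_H$ (subscripts $\ge i+1$), the final braid $\beta'=\gamma_L\gamma_H''$, with the high block reindexed down by one, is precisely the paper's $\beta_1\beta_3$. Where you diverge is in justifying the key step: the paper simply exhibits a direct isotopy of the closure (its Figure 10) that slides the single crossing away and merges strands $i$ and $i+1$, whereas you route the argument through a connected-sum decomposition, producing a separating $2$-sphere meeting $\hat\beta$ in two points and then invoking the standard overlapping-strand braid model of $\hat{\gamma_L}\#\hat{\gamma_H}$. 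Your detour buys genuine extra content: the permutation-cycle analysis (which is correct --- multiplying the disjoint block permutations by the transposition $(i,\,i+1)$ yields an $n$-cycle if and only if each block is a single cycle) shows that both summands are themselves knots, so your argument proves the stronger statement that a positive braid knot whose word contains some generator exactly once is a connected sum --- a fact the paper never needs or states. The cost is exactly the obstacle you flag at the end: making the sphere and closure-arc picture rigorous is more work than the paper's one-move isotopy, which accomplishes the reindexing with no reference to primeness or spheres; if you replace your connected-sum step with that direct isotopy (your own observation that the extreme cases $i=1$ or $i=n-1$ reduce to a Markov destabilization is the right intuition --- the general case is the same slide performed in the interior of the braid), the proof becomes fully elementary and coincides with the paper's.
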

\begin{proof}
We first note that $\beta=\beta_1\sigma_i\beta_2$ for positive braids $\beta_1$ and $\beta_2$, where $j<i$ for all generators $\sigma_j$ in $\beta_1$ and $j>i$ for all $\sigma_j$ in $\beta_2$. Similarly to the previous lemma, each generator can be moved to the correct side of the braid word using the Distant Generators Rule and the Conjugation Rule, as the generator gets moved around the end of the braid word while avoiding $\sigma_i$. This works because any generator that ends up in $\beta_2$ has subscript at least two greater than the subscript of any generator that ends up in $\beta_1$.

After expressing $\beta$ in this form, in Figure \ref{10ref} below, we see that we can isotopy the closure of $\beta_1\sigma_i\beta_2$ into the closure of $\beta_1\beta_3$, where $\beta_3$ is the same braid word as $\beta_2$, but with each subscript decreased by one. Since $\beta_1$ and $\beta_3$ are in $B_{n-1}$, this completes the proof.
\end{proof}

\begin{figure}[H]
\begin{center}
\begin{tikzpicture}
  \node[](l1){\includegraphics[scale=.2]{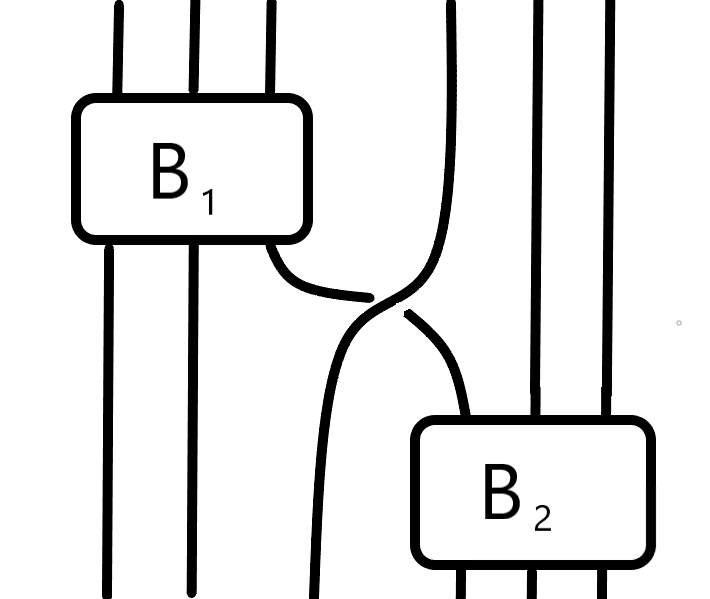}};
  \end{tikzpicture}
\begin{tikzpicture}
  \node[right=5cm](l2){\includegraphics[scale=.13]{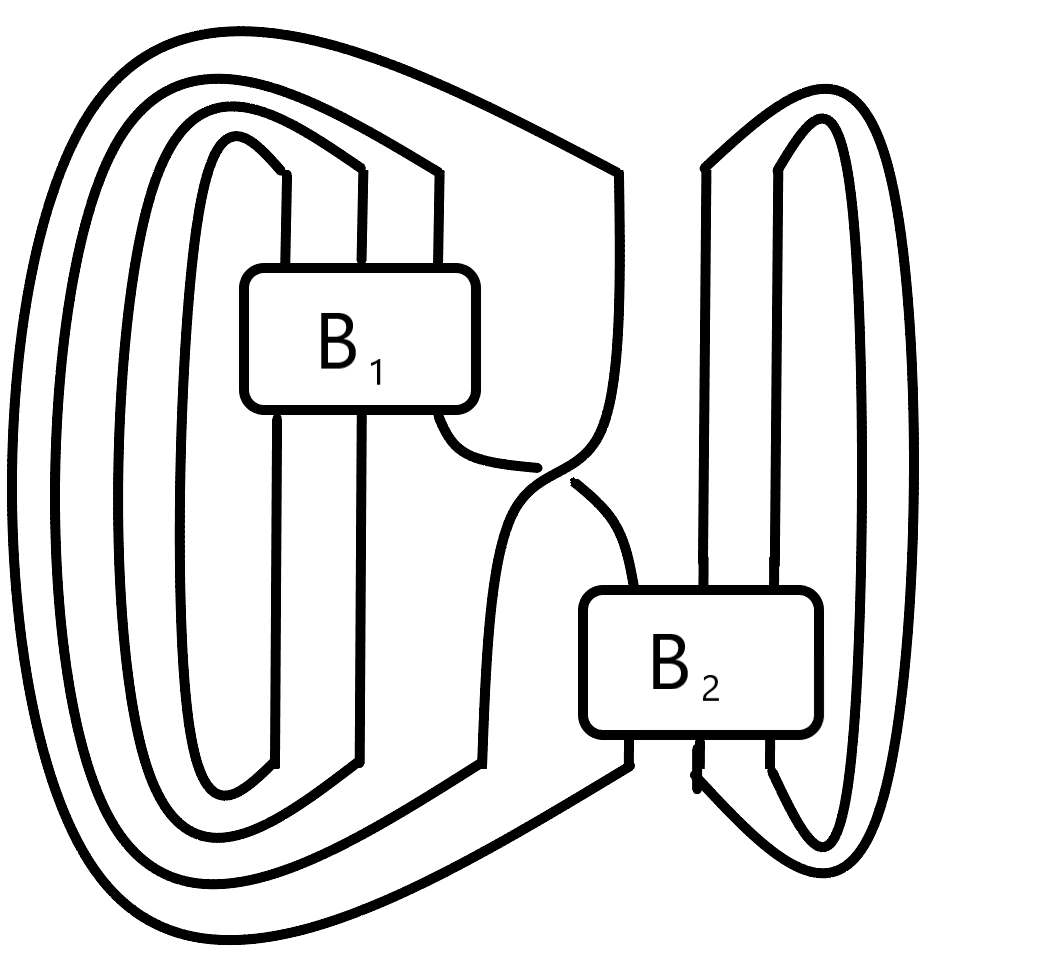}};
  \end{tikzpicture}
\begin{tikzpicture}
  \node[right=5cm](r1){\includegraphics[scale=.13]{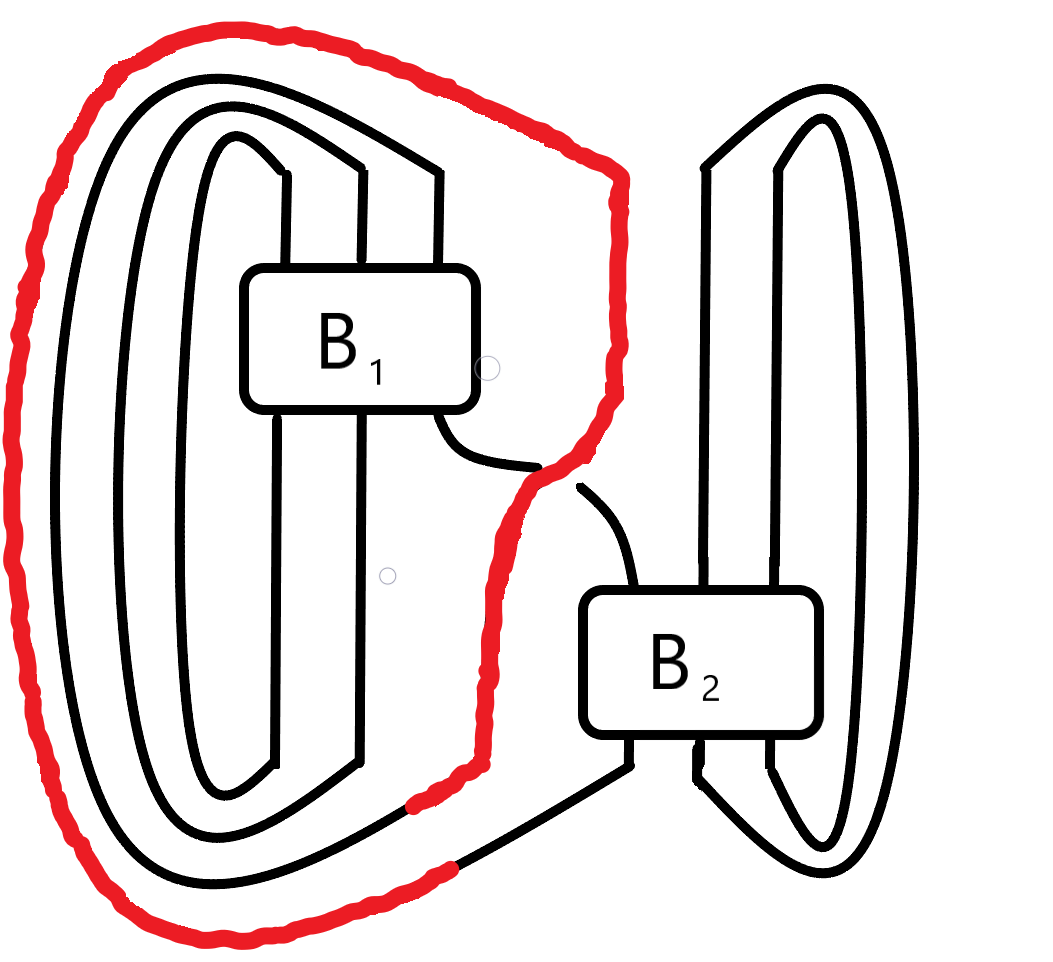}};
  \end{tikzpicture}
\begin{tikzpicture}
  \node[right=5cm](r2){\includegraphics[scale=.13]{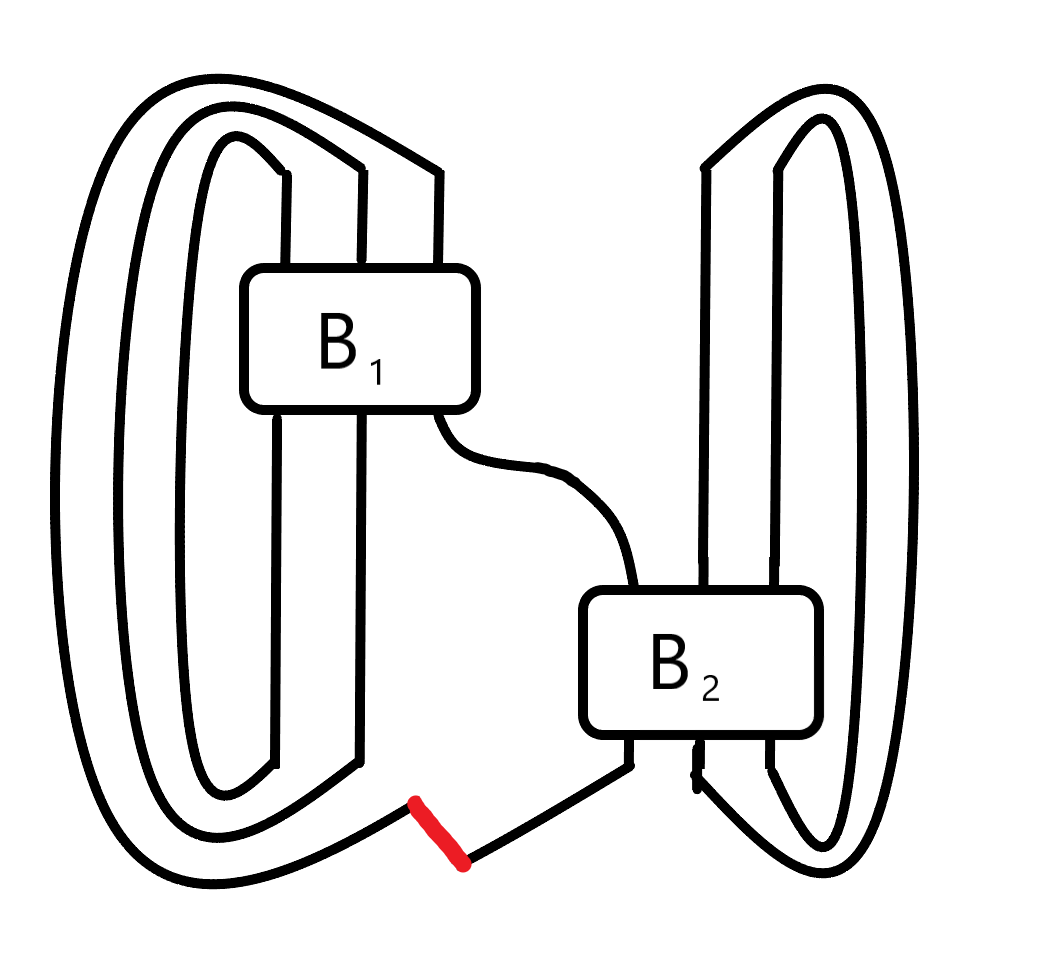}};
\end{tikzpicture}
\end{center}
\caption{A Braid with Only One $\sigma_i$}
\label{10ref}
\end{figure}

\begin{lem}
\label{gen3}
If $\hat\beta$ is a positive braid knot with unknotting number $m$, then $\hat\beta$ can be expressed as the closure of a positive braid on $2m+1$ strands or fewer.
\end{lem}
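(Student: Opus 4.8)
The plan is to induct on the number of strands $n$, combining the unknotting-number formula with a pigeonhole count that forces some generator to appear exactly once; Lemma \ref{gen2} then lets me shed a strand without changing the knot.

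First I would record the arithmetic. Writing $\beta$ in $B_n$ with braid word of length $\ell$, the formula $u(\hat\beta)=\frac{\ell-n+1}{2}=m$ gives $\ell=2m+n-1$. If $n\le 2m+1$ there is nothing to prove, so I may assume $n\ge 2m+2$, equivalently $n-1>2m$. By Lemma \ref{gen1}, each of the $n-1$ generators $\sigma_1,\dotsc,\sigma_{n-1}$ occurs at least once in $\beta$, which already accounts for $n-1$ of the $\ell$ letters. The remaining $\ell-(n-1)=2m$ letters are the only possible ``second-or-later'' occurrences of generators, so at most $2m$ of the $n-1$ generators can occur more than once. Since $n-1>2m$, by pigeonhole at least one generator $\sigma_i$ occurs exactly once.

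At that point I would invoke Lemma \ref{gen2}: because some $\sigma_i$ appears exactly once, $\hat\beta$ is the closure of a positive braid $\beta'$ in $B_{n-1}$. This $\beta'$ has the same closure $\hat\beta$, hence the same knot type and the same unknotting number $m$, and it is still a positive braid. I would then apply the inductive hypothesis to $\beta'$ (which lives on strictly fewer strands) to conclude that $\hat\beta$ can be expressed on $2m+1$ strands or fewer. The base case of the induction is precisely $n\le 2m+1$, disposed of above.

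The only point requiring care — the ``main obstacle'' — is verifying that the reduction of Lemma \ref{gen2} preserves both positivity and the unknotting number, so that the descent can be iterated; both hold because Lemma \ref{gen2} produces a \emph{positive} braid with the same closure, and an unchanged knot type forces an unchanged $m$. Everything else is the elementary count above, and the strand number strictly decreases at each step, so the process terminates.
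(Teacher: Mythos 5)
Your proof is correct and takes essentially the same approach as the paper: both combine Lemma \ref{gen1}, Lemma \ref{gen2}, and the formula $\ell=2m+n-1$ to force $n\le 2m+1$. The paper phrases it via a minimal-strand positive representative, on which every generator must appear at least twice (giving $\ell\ge 2(n-1)$ directly), while you run the identical count as an explicit pigeonhole-plus-descent induction; the two formulations are interchangeable.
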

\begin{proof}
Let $n$ be the fewest number of strands on which $\hat\beta$ can be expressed as a positive braid. By \cite{Livingston}, we have $$u(\hat\beta)=m=\frac{\ell-n+1}{2},$$
implying $\ell=2m+n-1$. Since we have $n-1$ generators $\sigma_1,\cdots,\sigma_{n-1}$, Lemmas \ref{gen1} and \ref{gen2} give us that each of these generators must appear at least twice. Therefore, the length of our braid word must be at least twice the number of generators, so $\ell\ge 2(n-1)$. Substituting, we get $2m+n-1\ge 2(n-1)$, implying $n\le 2m+1$. This tell us that $2m+1$ is greater than or equal to the fewest number of strands on which $\hat\beta$ can be expressed as a positive braid.
\end{proof}

\begin{thm}
\label{finite}
For each positive integer $m$, there are a finite number of positive braid knots with unknotting number $m$.
\end{thm}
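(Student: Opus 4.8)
The plan is to reduce the statement to a finite counting problem by bounding both the number of strands and the length of any braid word that can represent a positive braid knot of unknotting number $m$. By Lemma \ref{gen3}, every such knot $\hat\beta$ can be written as the closure of a positive braid on $n \le 2m+1$ strands. First I would pass to a representative on the fewest possible number of strands $n$, so that the unknotting-number formula $u(\hat\beta) = \frac{\ell - n + 1}{2}$ applies and yields $\ell = 2m + n - 1$ for the word length $\ell$.

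Next I would combine the two bounds. Since $n \le 2m+1$, the length satisfies $\ell = 2m + n - 1 \le 4m$. Thus any positive braid knot with unknotting number $m$ admits a positive braid representative using at most $2m+1$ strands and a word of length at most $4m$.

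The counting step is then immediate. For each fixed number of strands $n$ with $2 \le n \le 2m+1$, a positive braid word of length $\ell = 2m+n-1$ is a sequence of $\ell$ letters, each drawn from the $n-1$ generators $\sigma_1, \dots, \sigma_{n-1}$, so there are at most $(n-1)^{2m+n-1}$ such words. Summing this finite quantity over the finitely many admissible values of $n$ bounds the total number of candidate braid words, and since each word has a single closure, the number of distinct positive braid knots with unknotting number $m$ is finite.

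I expect the only subtlety to be that a single knot can be represented by many different braid words, of varying lengths and strand counts; this causes no trouble because the argument merely over-counts. Every positive braid knot of unknotting number $m$ occurs as the closure of at least one word in our finite list, which is all that the conclusion requires. No delicate estimate or new construction is needed, so there is no serious obstacle once Lemma \ref{gen3} is available; the heart of the result has already been carried out in establishing the strand bound.
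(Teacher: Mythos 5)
Your proof is correct and takes essentially the same approach as the paper: both invoke Lemma \ref{gen3} to bound the number of strands by $2m+1$, use the positive-braid unknotting formula $u(\hat\beta)=\frac{\ell-n+1}{2}$ to bound the word length, and then finish by (over-)counting positive braid words. The only immaterial difference is bookkeeping: the paper first stabilizes every representative to exactly $2m+1$ strands and gets the single bound $(2m)^{4m}$, whereas you sum $(n-1)^{2m+n-1}$ over the admissible strand counts $n\le 2m+1$.
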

\begin{proof}
Let $\hat\beta$ be a positive braid knot with unknotting number $m$. By Lemma \ref{gen3}, we know that $brd(\hat\beta) \leq 2m+1$. This means that $\hat\beta$ can be expressed on $2m+1$ strands, as any knot with a lower index can simply add each generator between its index and $2m$ inclusive to the end of its braid word to get a braid on $2m+1$ strands with an isotopic closure. If a positive braid with unknotting number $m$ is expressed on $2m+1$ strands, by the unknotting number formula for positive braids, $\ell=2m+n-1=2m+(2m+1)-1=4m$. So we have $4m$ letters in our braid word, each of which must be one of $2m$ generators. So there are no more than $(2m)^{4m}$ positive braid knots of unknotting number $m$.
\end{proof}

Naturally, most of these $(2m)^{4m}$ possible braid words have unknotting number less than $m$, are links, or form isotopic closures, so the actual number of positive braid knots of unknotting number $m$ is drastically lower. The goal here was simply to set some finite bound on the number of positive braid knots of a given unknotting number. This is not possible for knots in general, it is known that that there are an infinite number of knots of any unknotting number greater than zero. \cite{KnotBook}.

\begin{con}
If $\hat \beta_1$ and $\hat \beta_2$ are positive braid knots such that $\hat \beta_1 \leq_g \hat \beta_2$, then there exists a positive path from $\hat\beta_2$ to $\hat\beta_1$.
\end{con}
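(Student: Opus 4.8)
The plan is to prove the conjecture by induction on the integer $d = d_g(\hat\beta_1,\hat\beta_2) = u(\hat\beta_2)-u(\hat\beta_1)$, reducing it to a single ``one-step'' lemma. First I would record that $\leq_g$ is transitive on positive braid knots: if $\hat\beta_1 \leq_g \hat\gamma$ and $\hat\gamma \leq_g \hat\beta_2$, then the triangle inequality for $d_g$ together with the lower bound $d_g(\hat\beta_1,\hat\beta_2)\ge u(\hat\beta_2)-u(\hat\beta_1)$ forces equality, so $\hat\beta_1 \leq_g \hat\beta_2$, and the two positive paths concatenate into one. Hence it suffices to prove the one-step lemma: whenever $\hat\beta_1 \leq_g \hat\beta_2$ are positive braid knots with $d\ge 1$, there is a positive braid knot $\hat\gamma$ with $d_g(\hat\gamma,\hat\beta_2)=1$ and $\hat\beta_1 \leq_g \hat\gamma$. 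Prepending the step $\hat\beta_2\to\hat\gamma$ to the positive path from $\hat\gamma$ to $\hat\beta_1$ (which exists by the inductive hypothesis, since $d_g(\hat\beta_1,\hat\gamma)=d-1$) yields the desired positive path. By Theorem \ref{rules}, exhibiting such a $\hat\gamma$ through one application of the Crossing Change Rule (after braid isotopies) is enough, so the conjecture is equivalent to the completeness of the five rules for Gordian adjacency between positive braid knots.

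To attack the one-step lemma I would pass to genus and cobordism. For positive braid knots the quantity $\frac{\ell-n+1}{2}$ computes both the unknotting number and the Seifert genus (Bennequin), so $u=g$ and a Gordian adjacency $\hat\beta_1\leq_g\hat\beta_2$ is realized by $d$ crossing changes whose crossing-change cobordism has genus exactly $g(\hat\beta_2)-g(\hat\beta_1)$. Any such geodesic sequence has a first step landing on a knot $K$ with $d_g(K,\hat\beta_2)=1$ and $\hat\beta_1\leq_g K$. The crux is to replace $K$ by a positive braid knot. Here I would exploit that positive braid knots are fibered and that their fiber surfaces are iterated plumbings of positive Hopf bands, and try to show that an optimal crossing change can always be chosen to delete a pair $\sigma_i\sigma_i$ from a positive braid word for $\beta_2$ — precisely the Crossing Change Rule — so that $\hat\gamma$ is automatically a positive braid knot still satisfying $\hat\beta_1\leq_g\hat\gamma$.

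The main obstacle, and the reason the statement is only conjectured, is exactly this replacement. Gordian distance minimizes over all diagrams, so an optimal unknotting sequence for $\hat\beta_2$ need not pass through positive braid knots at all, and there is no a priori reason that a genus-minimizing crossing-change cobordism is ``positive'' or combinatorially of the $\sigma_i\sigma_i$ form; converting such an abstract geodesic into the five-rule calculus is the genuinely hard step. My line of attack would be to constrain the intermediate knot $K$ using concordance invariants that are additive under these genus-one cobordisms and that must drop by the expected amount at every optimal step ($\tau$, $s$, and the signature), and then to combine these constraints with the Garside normal form of $\beta_2$ in order to narrow the admissible $\sigma_i\sigma_i$ deletions to a finite, checkable family. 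Finally, since Theorem \ref{finite} guarantees only finitely many positive braid knots of each unknotting number, the one-step lemma can be verified computationally for small $m$, which would furnish both supporting evidence and a possible base case for a secondary induction on $m$.
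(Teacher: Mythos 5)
This statement is a \emph{conjecture} in the paper: the authors offer no proof, only the observations that it holds for every adjacency they construct and that, by Theorem \ref{finite}, verifying it for any given pair reduces to a finite search. Your proposal does not close that gap. Your reduction is sound as far as it goes: transitivity of $\leq_g$ on positive braid knots follows exactly as you say (the triangle inequality for $d_g$ plus the bound $d_g(\hat\beta_1,\hat\beta_2)\ge u(\hat\beta_2)-u(\hat\beta_1)$ forces equality), so the conjecture does follow by induction on $d$ from your one-step lemma. But the one-step lemma is precisely the open content of the conjecture, and your sketch for it never becomes an argument. An optimal unknotting sequence for $\hat\beta_2$ gives an intermediate knot $K$ with $d_g(K,\hat\beta_2)=1$ and $\hat\beta_1\leq_g K$, but nothing forces $K$ to be a positive braid knot, and nothing you cite (fiberedness, Hopf-band plumbings, behavior of $\tau$, $s$, or the signature under the cobordism) is shown to produce a positive braid knot $\hat\gamma$ with \emph{both} required properties: $d_g(\hat\gamma,\hat\beta_2)=1$ \emph{and} $\hat\beta_1\leq_g\hat\gamma$. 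The second condition is the delicate one --- even if some $\sigma_i\sigma_i$ deletion from a positive word for $\beta_2$ drops the unknotting number by one, there is no reason the resulting knot still dominates $\hat\beta_1$, so the induction cannot get started. You acknowledge this is ``the genuinely hard step,'' which is an honest assessment that what you have written is a research program, not a proof.

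One secondary overstatement is worth flagging. You assert the conjecture is \emph{equivalent} to completeness of the five rules for Gordian adjacency between positive braid knots. Only one direction is justified: a five-rule derivation yields a positive path via Lemma \ref{one} and Theorem \ref{rules}. A positive path, as defined in the paper, is merely an unknotting sequence whose intermediate knots are positive braid knots; its crossing changes are performed on arbitrary diagrams and need not be realizable as $\sigma_i\sigma_i$ deletions in positive braid words. So the conjecture could in principle hold while the five-rule calculus fails to witness some adjacency, and conflating the two makes your target lemma strictly harder than necessary. Your closing suggestion --- computational verification for small unknotting number using the $(2m)^{4m}$ bound from Theorem \ref{finite} --- is consistent with the paper's own remarks, but it furnishes evidence, not a base case for any induction that would settle the general statement.
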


This conjecture is true for all positive braid knots we show in this paper to be Gordian adjacent. It is also a statement that shows the value of Theorem \ref{finite}. If we have two positive braid knots known to be Gordian adjacent, we only need to look at a finite number of knots to determine whether a positive path from one to the other exists. If the conjecture is true, then there is only a finite number of possible positive paths to check before we can determine that some positive braid knot is not Gordian adjacent to another.


\printbibliography

\end{document}